\pgfplotsset{compat=newest}
\newlength\fheight \newlength\fwidth
\newcommand{\X}{\mathcal X}
\newcommand{\Y}{\mathcal Y}
\newcommand{\R}{\mathbb R}    
\newcommand{\N}{\mathbb{N}}   
\newcommand*\diff{\mathop{}\!\mathrm{d}}
\DeclareMathOperator*{\argmin}{arg\,min}
\DeclareMathOperator*{\trace}{tr}
\newcommand{\data}{Y}   
\newcommand{\sig}{f}   
\newcommand{\sigdag}{\sig^\dagger}  
\newcommand{\err}{Z}  
\newcommand{\flt}{q}    
\newcommand{\fop}{T}     
\newcommand{\bip}{\textsc{b}} 
\newcommand{\vap}{\textsc{v}} 
\newcommand{\vop}{\textsc{u}} 
\newcommand{\nop}{\textsc{n}} 
\newcommand{\risk}{\mathcal{R}} 
\newcommand{\thd}{\eta} 
\newcommand{\opt}{\mathrm{opt}} 
\renewcommand{\hat}{\widehat} 
\theoremstyle{plain}
\newtheorem{theorem}{Theorem}[section]
\newtheorem{lemma}[theorem]{Lemma}
\newtheorem{prop}[theorem]{Proposition}
\theoremstyle{definition}
\newtheorem{assumption}{Assumption}
\newtheorem{definition}{Definition}
\newenvironment{remark}
{\pushQED{\qed}\remarkx} 
{\popQED\endremarkx}
\newcommand{\abs}[1]{\left|#1\right|}
\newcommand{\norm}[1]{\left\Vert#1\right\Vert}
\newcommand{\snorm}[1]{\Vert#1\Vert}
\newcommand{\set}[2]{\left\{#1\, :\, #2\right\}}
\newcommand{\ExSign}{\mathbb E}
\newcommand{\E}[1]{{\ExSign}\left[ #1 \right]}
\newcommand{\sE}[1]{{\ExSign}[ #1 ]}
\newcommand{\PrSign}{\mathbb P}
\newcommand{\Prob}[1]{{\PrSign}\left\{#1 \right\}}
\newcommand{\svar}[1]{{\mathrm{Var}}( #1 )}
\newcommand{\eref}[1]{\eqref{#1}}
\newcommand{\rev}{}
\author{Housen Li and Frank Werner}
\date{\today}
\begin{document}

\title{Adaptive minimax optimality in statistical inverse problems via SOLIT --- Sharp Optimal Lepski\u{\i}-Inspired Tuning}

\author{Housen Li${}^{1}$,  and Frank Werner${}^{2}$\\
\vspace{0.1cm}\\
{\small${}^{1}$Institute for Mathematical Stochastics, University of G\"ottingen, Germany}\\
{\small${}^{2}$Institute of Mathematics, University of Würzburg, Germany}}

\maketitle

\begin{abstract}
		We consider statistical linear inverse problems in separable Hilbert spaces and filter-based reconstruction methods of the form $\hat f_\alpha = q_\alpha \left(T^*T\right)T^*Y$, where $Y$ is the available data, $T$ the forward operator, $\left(q_\alpha\right)_{\alpha  \in \mathcal A}$ an ordered filter, and $\alpha > 0$ a regularization parameter. Whenever such a method is used in practice, $\alpha$ has to be appropriately chosen. Typically, the aim is to find or at least approximate the best possible $\alpha$ in the sense that mean squared error (MSE) $\mathbb E [\Vert \hat f_\alpha - f^\dagger\Vert^2]$ w.r.t.~the true solution $f^\dagger$ is minimized. In this paper, we introduce the Sharp Optimal Lepski\u{\i}-Inspired Tuning (SOLIT) method, which yields an a posteriori parameter choice rule ensuring adaptive minimax rates of convergence. It depends only on $Y$ and the noise level $\sigma$ as well as the operator $T$ and the filter $\left(q_\alpha\right)_{\alpha  \in \mathcal A}$ and does not require any problem-dependent tuning of further parameters. We prove an oracle inequality for the corresponding MSE in a general setting and derive the rates of convergence in different scenarios. By a careful analysis we show that no other a posteriori parameter choice rule can yield a better performance in terms of {\rev the order of} the convergence rate of the MSE. In particular, our results reveal that the typical understanding of Lepski\u\i-type methods in inverse problems leading to a loss of a log factor is wrong. In addition, the empirical performance of SOLIT is examined in simulations. 
\end{abstract}

\section{Introduction}
	
	In this paper, we consider statistical linear inverse problems of the form 
	\begin{equation}\label{e:model}
		\data = \fop \sigdag  + \sigma \err
	\end{equation}
	with a linear and injective Hilbert--Schmidt operator $\fop : \X \to \Y$ mapping between separable Hilbert spaces $\X$ and $\Y$, and the standard Gaussian Hilbert space process $\err: \Y \to \mathbf L^2 \left(\Omega, \mathcal A, \mathcal P\right)$. Here and in what follows, we denote by $\sigdag \in \X$ the true (but unknown) solution, by $\sigma > 0$ the noise level, and by $\data$ the available measurements. Note that $\err$ in \eref{e:model} can be identified with an element in $\Y$ only if $\dim(\Y) < \infty$. Otherwise, we have $\data \notin \Y$ a.s., which implies that the model \eref{e:model} has to be understood in a weak sense, i.e.,  for all $y \in \Y$ we have access to the real-valued random variable
	\[
	\left\langle \data, y\right\rangle := \left\langle \fop \sigdag,y\right\rangle + \sigma \err(y).
	\]
	Statistical inverse problems of form \eref{e:model} are a widely investigated field as they arise in applications spanning from astronomy over medical imaging to engineering, see, e.g., \cite{hw16,bbdv09,os86}. The assumption of Gaussian white noise as in \eref{e:model} is most common and can be justified by either central limit theorems or asymptotic equivalence statements \cite{rsh18}. 
	
	To reconstruct the quantity of interest $\sigdag$ from the available data $\data$, we consider the filter-based regularization
	\begin{equation}\label{e:ualhat}
		\hat \sig_\alpha = \flt_\alpha \left(\fop^*\fop\right) \fop^*\data,
	\end{equation}
	where $\left(q_\alpha\right)_{\alpha > 0}$ is an ordered filter (cf.\ \cref{d:of} below). Spectral regularization schemes \eref{e:ualhat} based on filters were introduced by Bakushinski \cite{b67} and yield a variety of regularized estimators for the desired quantity $\sigdag$. They have also received substantial attention in the literature (see, e.g., the monograph \cite{ehn96} for a deterministic treatment and \cite{BHMR07,MP06,LiWe20} for a stochastic investigation). 
	
	An important property of the estimators $\hat \sig_\alpha$ under investigation is their behavior as the noise level $\sigma$ tends to $0$. Due to the famous result by Schock \cite{s85}, this behavior, if measured in the rate of convergence of the mean squared error (MSE) $\mathbb E \bigl[\bigl\Vert \hat \sig_\alpha - \sigdag\bigr\Vert_{\X}^2\bigr]$, can be arbitrarily slow. However, under additional smoothness conditions on $\sigdag$, it is in many situations possible to derive convergence rates
	\[
	\mathbb E \left[\bigl\Vert \hat \sig_{\alpha^*} - \sigdag\bigr\Vert_{\X}^2\right] = \mathcal O \left(\psi \left(\sigma\right)\right)
	\]
	with a monotonically increasing and continuous rate function $\psi : \R_{\ge 0} \to \R_{\ge 0}$, $\psi(0) = 0$, for the ideal parameter choice {\rev $\alpha^\opt = \alpha^\opt \left(\sigma\right)$} given by
	\begin{equation*}
		\alpha^\opt := \argmin_{\alpha > 0} \mathbb E \left[\bigl\Vert \hat \sig_\alpha - \sigdag\bigr\Vert_{\X}\right].
	\end{equation*}
    {\rev We stress that the focus of this paper is on the order of convergence rate, namely, $\psi(\sigma)$, rather than the multiplying constant hidden in Landau's big O notation. Thus, for brevity, the optimality statement throughout is meant for the order of convergence rate, not the multiplying constant, without  explicit declaration.}
	Under reasonable assumptions on the filter $\left(q_\alpha\right)_{\alpha > 0}$, the rates of convergence can actually be shown to be minimax optimal, i.e., best possible under all estimators uniformly over a smoothness class (cf.\ \cref{s:moa} for details). This is a strong indicator that the chosen family of estimators will perform well in practice, but it is unfortunately useless for applications since the oracle parameter choice $\alpha^\opt$ cannot be implemented (as $\sigdag$ is unknown). Therefore, one is actually interested {\rev in} practically implementable (i.e., not depending on unknown quantities such as $\sigdag$ or its smoothness) choices of $\alpha$ that mimic the minimax convergence behavior. 
	
	In this paper, we investigate a specific a posteriori parameter choice rule, which we call \textbf{S}harp \textbf{O}ptimal \textbf{L}epski\u\i-\textbf{I}nspired \textbf{T}uning (SOLIT). In terms of the estimator \eref{e:ualhat}, the corresponding parameter $\hat \alpha$ is constructed as follows. We first choose a suitable finite set of candidates $\left\{\alpha_0, ..., \alpha_{m_{\max}}\right\}$, ordered increasingly with respect to variance (or model complexity), and then set $\hat\alpha:= \alpha_{\hat m}$ with
	$$
	\hat m := \min\set{0 \leq m_1 \leq m_{\max}}{\max_{m_{\max}\geq m_2 > m_1} \bigl(\snorm{{\hat{\sig}_{\alpha_{m_1}} - \hat{\sig}_{\alpha_{m_2}}}} - \thd_{m_1,m_2}\bigr) \le 0},
	$$
	where $\thd_{m_1,m_2} \ge 0$ are explicitly computable critical values depending only on the noise level~$\sigma$, the filter $\flt_{\alpha}$, and the forward operator $\fop$ (see \cref{s:solit} for the formal definition and \cref{sec:z} for computational issues). It is derived from a more general principle proposed in \cite{SpWi19}, and our analysis exploits and extends results from that paper.
	
	The SOLIT rule $\hat\alpha \equiv \alpha_{\hat m}$ can be seen as a variant of Lepski\u\i-type balancing principle. The Lepski\u\i-type balancing principles are well-known in inverse problems and typically of the form $\alpha_{\mathrm{LEP}} = \alpha_{m_{\mathrm{LEP}}}$ with
	\begin{equation}\label{eq:lepskij}
	\hspace{-2cm}m_{\mathrm{LEP}}:= \min\set{{\rev 0} \leq m_1 \leq m_{\max}}{\max_{m_{\max} \ge m_2 > m_1}\left(\snorm{\hat{\sig}_{\alpha_{m_1}} - \hat{\sig}_{\alpha_{m_2}}}  - 4\kappa \mu_{m_2} \right) \leq 0},
	\end{equation}
	where $\mu_{k}:=\sigma\sqrt{\mathrm{trace}\bigl(\flt_{\alpha_k}\left(\fop^*\fop\right)^2\fop^*\fop\bigr)}$ and $\kappa \geq 1$ is a tuning parameter, see, e.g., \cite{mp03,bh05,m06,MP06,wh12,w18}. Parameter choice rules of the form $\alpha_{\mathrm{LEP}}$ have originally been introduced in \cite{l90}, and the usage in inverse problems is summarized in the overview paper \cite{m06}. It has been shown in \cite{bp05,MP06} that $\alpha_{\mathrm{LEP}}$ leads to an oracle inequality which allows to compare the MSE under $\alpha_{\mathrm{LEP}}$, i.e., $\mathbb E \left[\bigl\Vert \hat \sig_{\alpha_{\mathrm{LEP}}} - \sigdag\bigr\Vert_{\X}^2\right]$, with the minimal MSE $\mathbb E \left[\bigl\Vert \hat \sig_{\alpha^\opt} - \sigdag\bigr\Vert_{\X}^2\right]$ with high probability up to a $\sqrt{m_{\max}}\exp\left(-C\kappa^2\right)$ term. From this one can deduce that a reasonable choice of $\kappa$ (depending on the minimax rate of convergence, see \cref{s:moa} below, and hence on the ill-posedness of $\fop$) leads to minimax rates of convergence up to a logarithmic factor.
	
	Opposed to this, we will show that the SOLIT rule $\hat\alpha \equiv \alpha_{\hat m}$ yields the best possible rate (precisely the \textit{adaptive} minimax rate, see also \cref{s:moa} below) of convergence without loss of logarithmic factors and without the necessity to adjust any tuning parameters to the  ill-posedness of $\fop$ over a wide range of smoothness classes for $\sigdag$. One reason behind this is that the SOLIT $\hat \alpha$ employs a subtle upper bound $\thd_{m_1,m_2}$ depending on both $m_1$ and $m_2$, while the Lepski\u\i-type balancing principle $\alpha_{\mathrm{LEP}}$ utilises an upper bound $\mu_{m_2}$ depending just on $m_2$. Also importantly, SOLIT requires a special choice of the candidate parameters $\alpha_0, \ldots, \alpha_{m_{\max}}$, see \cref{ss:alpha} below. 
	
	The outline of this paper is as follows. In \cref{s:moa} we discuss the questions of minimax optimality and adaptation in inverse problems and provide the corresponding rates of convergence in a wide class of problems. This also serves as an overview or a review of already known results while adding some new aspects. \cref{s:solit} is then devoted to filter-based regularization and a detailed derivation of our parameter choice SOLIT. In this section, we also state a generic oracle inequality for the risk based on the SOLIT. The final result on rates of convergence including all necessary assumptions and computations is then presented in \cref{s:rates}. In \cref{sec:implementation} we discuss how the SOLIT can be implemented and provide details on the computation of the critical values $\thd_{m_1,m_2}$ involved in $\hat\alpha$. Furthermore we present various numerical simulations supporting our analysis, before we end this paper with some conclusions in \cref{sec:conclusions}.
	
	\section{Minimax optimality and adaptation}\label{s:moa}
	
	Before we define and analyze the SOLIT, let us recall the minimax paradigm in statistical inverse problems. Note that the forward operator $\fop$ in \eref{e:model} is assumed to be compact, injective and Hilbert--Schmidt. Thus, there is an eigendecomposition of $\fop^*\fop$ with strictly positive eigenvalues $\{\lambda_k\}_{k\in\N}$ ordered decreasingly and corresponding eigenfunctions $\{e_k\}_{k \in \N}$, such that 
	\begin{equation}\label{e:eigdc}
		\fop^*\fop e_k = \lambda_k e_k, \quad \lambda_1\ge \lambda_2 \ge \cdots > 0\quad\mbox{and} \quad \sum_{k=1}^\infty\lambda_k < \infty.
	\end{equation}
	We stress that this decomposition is only used in our theoretical analysis and is not necessarily required for the construction of our estimator or the SOLIT parameter choice, {\rev see \cref{sec:implementation}.}
	
	The degree of ill-posedness of the model \eref{e:model} is often characterized by the decay rate $\lambda_k \to 0$ as $k \to \infty$. Most commonly assumed is a polynomial rate of decay 
	\begin{equation}\label{e:eigop}
		c_{\min}k^{-a} \le \lambda_k \le c_{\max} k^{-a} \quad \mbox{for all } k \in \N
	\end{equation}
	with $0 < c_{\min} \leq c_{\max}$ and $a > 0$. This is often referred to as \emph{mild ill-posedness.} Applications where this occurs include medical X-ray tomography (with $\fop$ the $d$-dimensional Radon transform, $d\in \N_{\ge 2}$, which satisfies \eref{e:eigop} with $a = d-1$, see \cite{Nat01}), parameter identification problems in partial differential equations (see, e.g., \cite{Isa17}), and density estimation in statistics (see, e.g., \cite{Tsy09}), to name a few. Also widely accepted are models with an exponential decay of the singular values
	\begin{equation}\label{e:eigopb}
		c_{\min}k^{-a} \exp(-b k^{\vartheta}) \le \lambda_k \le c_{\max} k^{-a} \exp(-b k^{\vartheta})\quad \mbox{for all } k \in \N
	\end{equation}
	with $a \in \R$ and $b, \vartheta > 0$, which is typically called \emph{severe ill-posedness}. Some examples include inverse heat equation problems \cite{EnRu95} as well as image deblurring and light microscopy (where $\fop$ is the convolution operator with, e.g., a Gaussian kernel) \cite{KoTs93}. In most of this paper, we will focus on mild ill-posedness \eref{e:eigop}, but in \cref{s:sip} we will also study the severely ill-posed case \eref{e:eigopb}.
	
	As mentioned in the Introduction, nothing can be said without further assumptions on the unknown $\sigdag $. Typically, one poses spectral source conditions of the form
	\begin{equation}\label{e:scd}
		\sigdag \in \mathcal W_\varphi(\rho) := \set{\sig \in \X }{ \sig = \varphi\left(\fop^*\fop\right)g\quad \mbox{for }~\norm{g}_\X \leq \rho}
	\end{equation}
	with an \emph{index} function $\varphi : \R_{\ge 0} \to \R_{\ge 0}$ (i.e., monotonically increasing, continuous and $\varphi(0) = 0$). With the eigendecomposition of $\fop^*\fop$ in \eref{e:eigdc}, we can rewrite the source condition \eref{e:scd} equivalently as an ellipsoid 
	$$
	\mathcal{W}_\varphi(\rho) = \set{ f = \sum_{k = 1}^\infty f_k e_k}{\sum_{k=1}^\infty w_k f_k^2 \le \rho^2} \subseteq \X,
	$$
	where $w_k = \varphi(\lambda_k)^{-2}$ for $k \in \N$. For several operators $\fop$, spectral source conditions with specific index functions $\varphi$ can be interpreted as classical smoothness conditions in terms of Sobolev spaces. Note that for every $\sigdag$ there exist an index function $\varphi$ and a parameter $\rho > 0$ such that $\sigdag \in \mathcal W_{\varphi} \left(\rho\right)$ (see \cite{mh08}), but the decay behavior of $\varphi$ at $0$ can be arbitrarily slow. 
	
	We are particularly interested in index functions $\varphi$ of form
	\begin{equation}\label{e:indfun}
		\varphi(x) = \varphi_{\nu,\tau}(x) := x^\nu(- \log x)^{-\tau}\qquad \mbox{for } x \ge 0,
	\end{equation}
	where either $\nu > 0$ and $\tau \in \R$ (referred to as \emph{polynomial smoothness}), or $\nu=0$ and $\tau > 0$ (referred to as \emph{logarithmic smoothness}). To ensure well-definedness of $\varphi_{\nu, \tau}\left(\fop^*\fop\right)$, we assume that $\lambda_1 \leq \exp(-1)$, without loss of generality.  
	
	Given an index function $\varphi$ and a parameter $\rho > 0$, one can now ask for the minimax risk of the MSE, denoted by
	\begin{equation}\label{e:or}
		\psi \left(\sigma\right) = \psi\left(\sigma, \varphi,\rho,\fop\right) := \inf_{\hat \sig~\mathrm{estimator}} \sup_{\sigdag \in \mathcal W_{\varphi}\left(\rho\right)} \mathbb E \left[\bigl\Vert \hat \sig - \sigdag\bigr\Vert_{\X}^2\right].
	\end{equation}
    {\rev Here and below (e.g., in \eref{e:orpa}) an estimator is any measurable mapping from $\Y^*$ to $\X$, with typical Borel $\sigma$-algebras, where $\Y^*$ denotes the space of linear mappings from $\Y$ to $\mathbf L^2 \left(\Omega, \mathcal A, \mathcal P\right)$.} 
	To derive the minimax risk \eref{e:or}, it turns out that spectral cutoff --- corresponding to the filter-based regularization \eref{e:ualhat} with $\flt_{\alpha}(\lambda) := \lambda^{-1}\mathbf{1}\{\lambda \ge \alpha\}$ --- is particularly helpful. It has been shown, e.g., in \cite{Erm89,Koo93,MaRu96}, that, for known smoothness parameters $\nu$ and $\tau$, the spectral cutoff estimator achieves the minimax rate of convergence (this is the same rate as the minimax risk in \eref{e:or} as $\sigma \to 0$) with the index function $\varphi_{\nu, \tau}$ in \eref{e:indfun}. Furthermore, a modified spectral cutoff \cite{Pin80}, often known as Pinsker's filter, is shown to achieve asymptotically the exact minimax risk (i.e., front constants are also optimal) in many cases, e.g., when $\sup_{k \in \N} \lambda_k/\lambda_{k+1} < \infty$, and the radius $\rho$ is also known, see \cite{GoKh99}.
	
	However, note that the minimax risk \eref{e:or} allows the estimator to have knowledge of $\varphi$, which is typically unknown in practice. A more realistic setting is to suppose that $\varphi \in \Phi$ with a collection of index functions $\Phi$. For example, it might be a priori known that $\sigdag$ is of polynomial smoothness, which corresponds to
	\begin{equation}\label{e:cps}
		\Phi_{\nu_0} = \set{\varphi = \varphi_{\nu,\tau}}{0 < \nu \le \nu_0,\,\tau \in \R},   
	\end{equation}
	or that a $\sigdag$ is of logarithmic smoothness, i.e.,
	\begin{equation}\label{e:cls}
		\Phi_{\tau_0} = \set{\varphi = \varphi_{\nu,\tau}}{ \nu = 0,\, 0 <\tau \le \tau_0},    
	\end{equation}
	where $\nu_0,\tau_0 > 0$ are known. In this case, one is interested to \emph{adapt to the unknown smoothness over $\Phi$}, 
 {\rev and wants to achieve the minimal price of adaptation over $\Phi$ with respect to the MSE, defined as 
    \begin{equation}\label{e:orpa}
    \varpi (\sigma) = \varpi (\sigma, \Phi, \rho, \fop) := \inf_{\hat \sig~\mathrm{estimator}}\; \sup_{\varphi \in \Phi} \; \sup_{\sigdag \in \mathcal W_{\varphi}\left(\rho\right)}\frac{ \mathbb E \left[\bigl\Vert \hat \sig - \sigdag\bigr\Vert_{\X}^2\right]}{\psi(\sigma, \varphi, \rho, \fop)},
    \end{equation}
    with $\psi(\sigma, \varphi, \rho, \fop)$ defined in \eref{e:or}. 
    For a fixed $\varphi \in \Phi$, we refer to 
        \begin{equation}\label{e:oar}
\psi_*(\sigma) = \psi_*(\sigma, \varphi, \Phi, \rho, \fop):= \varpi (\sigma, \Phi, \rho, \fop) \psi(\sigma, \varphi, \rho, \fop)
\end{equation}
as the adaptive minimax risk of the MSE over $\Phi$.
Clearly, it holds that the adaptive minimax risk $\psi_*(\sigma, \varphi, \Phi, \rho, \fop)$ is always no better (i.e., no faster decay as the argument tends to $0$) than the minimax risk $\psi(\sigma, \varphi, \rho, \fop)$, since $\varpi(\sigma) \ge 1$.} There are situations that both risks coincide up to a constant factor and also situations that both differ by a logarithmic factor depending on $\sigma$ even when $\Phi$ contains only two index functions. 
	
	Determining the adaptive minimax risk in \eref{e:oar} {\rev or equivalently the minimal price of adaptation in \eref{e:orpa}} is more delicate. In case of logarithmic smoothness (i.e., $\Phi_{\tau_0}$), the convergence rate of the adaptive minimax risk \eref{e:oar} coincides with that of the (non-adaptive) minimax risk \eref{e:or} even for $\Phi := \bigcup_{\tau_0 > 0}\Phi_{\tau_0}$.
	Interestingly, the order of optimal adaptation rates can be attained by, e.g., the spectral cutoff with regularization parameter $\alpha \asymp \sigma$, which depends solely on the noise level, and thus requires no knowledge of the smoothness parameter $\tau$ and the radius $\rho$ (see \cref{p:nlr} in \ref{s:lbar} for general ordered filters). 
	In case of polynomial smoothness (i.e., $\Phi_{\nu_0}$), the relation between the minimax risk and the adaptive minimax risk is subtle and depends on the degree of ill-posedness. 
	For mildly ill-posed problems (which we mainly consider), the optimal adaptation rates can be equal to the optimal rates, as it shown by \cite{CaTs02} using a spectral cutoff variant together with a penalized blockwise Stein’s rule, see also \cite{LiWe20}. For severely ill-posed problems (i.e., eigenvalues of $\fop^*\fop$ decaying at an exponential rate; cf.\ \eref{e:eigopb} and \cref{s:sip}), the optimal adaptation rates can differ from the optimal rates by a logarithmic factor. This has, e.g., been shown \cite{Tsy00}, and we will discuss this in detail in \cref{s:sip}.  The difference to the minimax risk is exactly the price that one has to pay for the missing information about the underlying smoothness of the truth. 
	
	One of the main results of this paper is that the filters of form \eref{e:ualhat} with the SOLIT rule are able to attain the sharp order of optimal adaptation rates for different combinations of smoothness and mild ill-posedness without manually adjusting the tuning parameters. In particular, this shows that the typical understanding in inverse problems that one has to lose a log-factor (see, e.g., Theorem~1 in \cite{MP06}) when applying Lepski\u\i-type methods is \emph{not} correct.
		
	\section{Filter-based regularization and SOLIT}\label{s:solit}
	
	Before we introduce the SOLIT rule, let us briefly set up the notation for filter-based regularization of \eref{e:model}. 
	
	\subsection{Filter-based regularization}
	
	In this paper, we consider the estimator $\hat\sig_\alpha$ in form of an \emph{ordered filter} $\flt_\alpha(\cdot)$, i.e.,
$$
	\hat\sig_\alpha = \flt_\alpha(\fop^*\fop) \fop^* \data,
$$
where $\alpha$ is a tuning parameter (i.e., regularization parameter).  
	
	\subsubsection{Ordered filters}\label{sss:of}

	\begin{definition}[Ordered filters]\label{d:of}
		The function $\flt_{\alpha}(\cdot): [0,\; \norm{\fop^*\fop}] \to \R_{\ge 0}$, indexed by $\alpha \in \mathcal{A}$, is called an \emph{ordered filter}, if it satisfies:
		\begin{enumerate}
			\item\label{flt:upb}
			For all $x \in [0,\; \norm{\fop^*\fop}]$ and for all $\alpha \in \mathcal{A}$, 
			$$
			\alpha \,{\flt_{\alpha}(x)} \le C_{\flt}'\quad \mbox{ and } \quad   x\, {\flt_{\alpha}(x)} \le C_{\flt}''\equiv 1.
			$$
			\item\label{flt:ord}
			It is \emph{ordered} with respect to \enquote{variance}, i.e.,
			$$
			\alpha \ge \tilde{\alpha} \quad \Rightarrow\quad \flt_{\alpha}(x) \le \flt_{\tilde{\alpha}}(x)\; \mbox{ for all } x \in [0,\; \norm{\fop^*\fop}].
			$$
		\end{enumerate}
		Here $\mathcal{A} \subset \R_{>0}$ has an accumulation point at $0$.
	\end{definition}
	
	\begin{remark}
		In comparison with the literature (e.g., \cite{LiWe20}), we make here two additional but rather weak assumptions, namely, the nonnegativity of the filter $\flt_{\alpha}(\cdot) \ge 0$, and the non-expansiveness $C_{\flt''} = 1$. In particular, \cref{d:of} still includes common examples of regularization methods, such as spectral cutoff, Tikhonov, iterated Tikhonov, Landweber, and Showalter, etc.
	\end{remark}
		
	\subsubsection{Spectral source condition}
	
	For the analysis of filter-based regularizations, further conditions on $\sigdag$ are necessary. Despite the discussion in the Introduction, we will work with a more general setting here (i.e., with general source conditions and general decay of the singular values of the forward operator). 
	
	\begin{assumption}[Spectral source condition]\label{a:scc}
		\begin{enumerate}
			\item
			Assume the source condition in \eref{e:scd} holds with {index function} $\varphi:\R_{\ge 0} \to \R_{\ge 0}$  being continuous, strictly increasing and $\varphi(0) = 0$.
			\item
			We assume 
			$$
			\phi:\R_{\ge 0} \to \R_{\ge 0}\qquad x \mapsto x \varphi^{-1}(\sqrt{x})\qquad \mbox{is convex,}
			$$
			where $\varphi^{-1}(\cdot)$ is the inverse function of $\varphi(\cdot)$.
		\end{enumerate}
	\end{assumption}
	
	\cref{a:scc}(ii) is made only for technical convenience, and imposes no restriction, as one can always work with a convex surrogate of $\phi$ in case that this assumption is violated. In particular, the index function in \eref{e:indfun} satisfies \cref{a:scc}. 
	
	We further require some compatibility between ordered filters and smoothness characterizations. 
	
	\begin{assumption}[Qualification]\label{a:qlf}
		The index function $\varphi$ in \eref{e:scd} is a \emph{qualification} of ordered filter $\flt_{\alpha}$ in the sense that
		$$
		\sup_{x \in [0,\; \norm{\fop^*\fop}]} \varphi(x) \bigl({1- x \flt_{\alpha}(x)}\bigr) \le C_{\varphi} \, \varphi(\alpha)\qquad \mbox{for}\quad \alpha \in \mathcal{A}.
		$$
	\end{assumption}
		
	As a consequence of \cref{d:of}(i), and Assumptions~\ref{a:scc}(i) and~\ref{a:qlf}, it holds that $\lim_{\alpha \to 0} x\,\flt_{\alpha}(x) = 1$ for all $x \in [0,\; \norm{\fop^*\fop}]$. It further follows that $\flt_\alpha(\fop^*\fop) \to (\fop^*\fop)^{-1}$ as $\alpha \to 0$. Thus, the filter-based regularization can be seen as a stable approximation of the least squares estimate. 
	
	In this paper, the notation $\lim_{\alpha \to 0}$ or $\alpha \to 0$ means that $\alpha$ goes to zero while remains in $\mathcal{A}$. This is possible, since $0$ is an accumulation point of~$\mathcal{A}$. 
		
	\subsection{SOLIT}
		
	Let us now introduce the SOLIT rule in detail.
	
	\subsubsection{Candidates for the regularization parameter $\alpha$}\label{ss:alpha}
	
	As many other parameter choice methods in the literature, our method relies heavily on a suitably chosen set of candidate parameters $\alpha_ 0 > \alpha_1 > \cdots > \alpha_{m_{\max}} > 0$. Generally, the value of regularization parameter can be interpreted as some sort of \enquote{degree of model complexity}: the smaller $\alpha_j$ (and hence the larger $j$), the smaller the bias and the larger the variance of the investigated estimator $\hat f_{\alpha_j}$. 
	
	A standard way is to consider geometrically equispaced candidates, i.e.\ 
	\begin{equation*}
		\alpha_m \equiv [\sigma^2 {\theta}^{m_{\max} - m}]_{\mathcal{A}}, \qquad m = 0, 1, \ldots, m_{\max},
	\end{equation*}
	where $[x]_{\mathcal{A}} := \argmin_{z \in \mathcal{A}}\abs{z - x}$ and $\theta > 1$ is a tuning parameter. The maximal index  $m_{\max}$ is determined to ensure that the smallest candidate $\alpha_{\max}$ is of the same order as $\sigma^2$. The \enquote{rounding} function $[x]_{\mathcal{A}}$ is necessary to ensure that only regularization parameters out of the possible set $\mathcal A$ are considered as candidates. For instance, in case of spectral cutoff, we have that $\alpha_m \in \{\lambda_1,\lambda_2,\ldots\}$ as $[x]_{\mathcal{A}} = \lambda_{k_x}$ with $k_x = \argmin_{k\in \N}\abs{x - \lambda_k}$.
	
	For the SOLIT, we proceed differently and assume that we are able to discretize $\mathcal A$ in a way that not the candidate parameters $\alpha_j$ decrease geometrically, but the variance of $\hat f_{\alpha_j}$ does. Let us therefore introduce the function
	\begin{equation}\label{eq:V_def}
		V_{T,\flt}\left(\alpha\right) := \frac{\trace\bigl(\mathrm{Var}(\hat\sig_{\alpha})\bigr)}{\sigma^2} = \sum_{k = 1}^\infty\lambda_k\bigl( \flt_{\alpha}(\lambda_k)\bigr)^2, \qquad \alpha \in \mathcal A,
	\end{equation}
	which is independent of the noise level $\sigma$ and depends only on the operator $T$ (via its singular values) and the ordered filter $\flt_\alpha$.
	
	We briefly collect some properties of $V_{T,\flt}$:
	
	\begin{lemma}\label{lem:v_T_flt}
		Let $\flt_{\alpha}$ be an ordered filter such that some index function $\varphi$ is a qualification. Then the function $V_{T,\flt} : \mathcal A \to \R$ obeys the following properties:
	\begin{enumerate}
		\item $V_{T,\flt}$ is decreasing.
		\item $	\lim_{\alpha \to 0} V_{T,\flt}\left(\alpha\right) = \infty$.
	\end{enumerate}
	\end{lemma}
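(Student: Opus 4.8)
The plan is to establish both properties directly from the series representation $V_{T,\flt}(\alpha) = \sum_{k=1}^\infty \lambda_k (\flt_\alpha(\lambda_k))^2$ in \eref{eq:V_def}, using the ordered-filter structure of \cref{d:of} together with the qualification assumption.

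For monotonicity (part (i)), the key observation is \cref{d:of}(\ref{flt:ord}): if $\alpha \ge \tilde\alpha$, then $\flt_\alpha(x) \le \flt_{\tilde\alpha}(x)$ for every $x \in [0, \norm{\fop^*\fop}]$, and since filters are nonnegative, squaring preserves the inequality, so $\lambda_k (\flt_\alpha(\lambda_k))^2 \le \lambda_k (\flt_{\tilde\alpha}(\lambda_k))^2$ termwise. Summing over $k$ yields $V_{T,\flt}(\alpha) \le V_{T,\flt}(\tilde\alpha)$, which is exactly the claim that $V_{T,\flt}$ is decreasing. (I should note that finiteness of these sums is guaranteed by \cref{d:of}(\ref{flt:upb}): $x \flt_\alpha(x) \le 1$ gives $\lambda_k(\flt_\alpha(\lambda_k))^2 \le \lambda_k^{-1}\cdot(\lambda_k \flt_\alpha(\lambda_k))^2 \le \lambda_k^{-1}$, but a sharper bound $\lambda_k(\flt_\alpha(\lambda_k))^2 \le (C_\flt')^2/(\alpha^2\lambda_k^{-1})$ combined with $x\flt_\alpha(x)\le 1$ and summability of $\sum\lambda_k$ does the job for fixed $\alpha > 0$; in fact $\lambda_k(\flt_\alpha(\lambda_k))^2 = \lambda_k^{-1}(\lambda_k\flt_\alpha(\lambda_k))(\lambda_k\flt_\alpha(\lambda_k)) \le \lambda_k^{-1}$ is too weak, so I will use $\flt_\alpha(\lambda_k) \le C_\flt'/\alpha$ to get $\lambda_k(\flt_\alpha(\lambda_k))^2 \le \lambda_k (C_\flt'/\alpha)^2$, summable since $\sum\lambda_k<\infty$.)

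For the divergence (part (ii)), I would argue by contradiction or by a direct lower bound. The point is that the qualification assumption (\cref{a:qlf}) together with \cref{d:of}(\ref{flt:upb}) forces $x\flt_\alpha(x) \to 1$ as $\alpha\to 0$ for each fixed $x$ — this consequence is already recorded in the text immediately after \cref{a:qlf}. Hence for each fixed $k$, $\lambda_k(\flt_\alpha(\lambda_k))^2 \to \lambda_k \cdot \lambda_k^{-2} = \lambda_k^{-1}$ as $\alpha\to 0$. Since $\lambda_k \to 0$, we have $\lambda_k^{-1}\to\infty$, so $\sum_{k=1}^\infty \lambda_k^{-1} = \infty$. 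Now fix any $M > 0$; choose $N$ with $\sum_{k=1}^N \lambda_k^{-1} > 2M$, then choose $\alpha_0 \in \mathcal A$ small enough that $\lambda_k(\flt_\alpha(\lambda_k))^2 \ge \tfrac12\lambda_k^{-1}$ for all $k \le N$ and all $\alpha \le \alpha_0$ in $\mathcal A$ (possible since this is a finite set of limits, each approached monotonically by \cref{d:of}(\ref{flt:ord})). Then $V_{T,\flt}(\alpha) \ge \sum_{k=1}^N \lambda_k(\flt_\alpha(\lambda_k))^2 \ge \tfrac12\sum_{k=1}^N\lambda_k^{-1} > M$ for all such $\alpha$, and since $0$ is an accumulation point of $\mathcal A$ this shows $\lim_{\alpha\to 0}V_{T,\flt}(\alpha) = \infty$.

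The only mildly delicate point is justifying the pointwise convergence $\flt_\alpha(\lambda_k)\lambda_k \to 1$ uniformly enough over the finite index set $k\le N$; but monotonicity in $\alpha$ from \cref{d:of}(\ref{flt:ord}) turns pointwise convergence at each of finitely many points into the required joint smallness, so there is no real obstacle. I expect part (ii) to be the one requiring the qualification hypothesis (part (i) uses only the ordered-filter axioms), and the argument above isolates exactly where it enters.
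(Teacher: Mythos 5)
Your proposal is correct and follows essentially the same route as the paper's proof: part (i) is immediate from the ordered-filter property \cref{d:of}(\ref{flt:ord}), and part (ii) exploits the pointwise limit $\flt_\alpha(\lambda_k) \to \lambda_k^{-1}$ (a consequence of the qualification assumption, as you correctly note) together with the divergence of $\sum_k \lambda_k^{-1}$ implied by $\sum_k \lambda_k < \infty$. The only cosmetic difference is that the paper compresses the lower-bound step into a single appeal to Fatou's lemma, whereas you unpack it into an explicit truncation argument (made uniform over the finite index set by the monotonicity in $\alpha$); these are the same argument, yours just spelled out from first principles.
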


\begin{proof}
\begin{enumerate}
	\item This follows immediately from \cref{d:of}(ii).
	\item By \cref{a:qlf} it holds $\lim_{\alpha\to 0} \flt_\alpha\left(\lambda\right) = \lambda^{-1}$. By Fatou's lemma we obtain
	\[
			\lim_{\alpha \to 0} V_{T,\flt}\left(\alpha\right)\geq \sum_{k = 1}^\infty\lambda_k\bigl(\lim_{\alpha \to 0}  \flt_{\alpha}(\lambda_k)\bigr)^2 = \sum_{k = 1}^\infty\frac{1}{\lambda_k} = \infty.
	\]
\end{enumerate}	
\end{proof}

	We now suppose the following:
	\begin{assumption}[Candidate selection]\label{ass:alpha}
		Let $\theta_1 > 1$ be fixed. Suppose there exist a sequence of parameters 
		\[
		\alpha_ 0 > \alpha_1 > \cdots > \alpha_n > \cdots
		\]
		in $\mathcal A$ and a universal constant $\theta_2 \geq \theta_1$ such that 
		\begin{equation}\label{eq:V}
			\theta_1 \leq \frac{V_{T,\flt}\left(\alpha_m\right)}{V_{T,\flt}\left(\alpha_{m-1}\right)} \leq \theta_2 \qquad \mbox{for all}\quad m \in \mathbb N. 
		\end{equation}
	\end{assumption}
	
	Assumption \ref{ass:alpha} is satisfied in nearly all practically relevant examples, especially under the following mild conditions on the filter $\flt$:
	\begin{lemma}\label{lem:alpha}
		Let $\mathcal A = \left(0,\infty\right)$ and $\alpha \mapsto \flt_\alpha\left(\lambda\right)$ be continuous for all $\lambda \in \left[0,\norm{T^*T}\right]$. Then for any fixed $\theta_1 > 1$, Assumption \ref{ass:alpha} is satisfied with $\theta_2 = \theta_1$.
	\end{lemma}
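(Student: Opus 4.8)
The plan is to show that $V_{T,\flt}$ is continuous on $\mathcal A=(0,\infty)$ and then to construct the required candidate sequence recursively via the intermediate value theorem, arranging every consecutive ratio to equal exactly $\theta_1$ (so that $\theta_2=\theta_1$ works).

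First I would prove that $V_{T,\flt}$ is continuous on $(0,\infty)$. Fix $\alpha_*>0$ and a sequence $\alpha^{(n)}\to\alpha_*$. For $n$ large we have $\alpha^{(n)}\ge\alpha_*/2$, so \cref{d:of}(ii) gives $\flt_{\alpha^{(n)}}(\lambda_k)\le\flt_{\alpha_*/2}(\lambda_k)$ and hence $\lambda_k\bigl(\flt_{\alpha^{(n)}}(\lambda_k)\bigr)^2\le\lambda_k\bigl(\flt_{\alpha_*/2}(\lambda_k)\bigr)^2$ term by term. By \cref{d:of}(i), $\flt_{\alpha_*/2}(\lambda_k)\le 2C_{\flt}'/\alpha_*$, so the dominating series is bounded by $(2C_{\flt}'/\alpha_*)^2\sum_{k}\lambda_k$, which is finite by \eref{e:eigdc}. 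Since $\alpha\mapsto\flt_\alpha(\lambda_k)$ is continuous by hypothesis, dominated convergence (with respect to the counting measure) yields $V_{T,\flt}(\alpha^{(n)})\to V_{T,\flt}(\alpha_*)$. Combined with \cref{lem:v_T_flt}, this shows $V_{T,\flt}$ is continuous and decreasing on $(0,\infty)$ with $V_{T,\flt}(\alpha)\to\infty$ as $\alpha\to0$.

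Next I would construct the candidates. Since $V_{T,\flt}(\alpha)\to\infty$ as $\alpha\to0$, I can fix $\alpha_0\in(0,\infty)$ with $V_{T,\flt}(\alpha_0)>0$; monotonicity then gives $V_{T,\flt}(\alpha)\ge V_{T,\flt}(\alpha_0)>0$ for all $\alpha\in(0,\alpha_0]$. Assuming $\alpha_{m-1}$ has been constructed with $V_{T,\flt}(\alpha_{m-1})>0$, note that on $(0,\alpha_{m-1}]$ the function $V_{T,\flt}$ is continuous, attains $V_{T,\flt}(\alpha_{m-1})$ at the right endpoint and diverges to $\infty$ near $0$; since $\theta_1 V_{T,\flt}(\alpha_{m-1})>V_{T,\flt}(\alpha_{m-1})$, the intermediate value theorem provides some $\alpha_m\in(0,\alpha_{m-1})$ with $V_{T,\flt}(\alpha_m)=\theta_1 V_{T,\flt}(\alpha_{m-1})$. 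Because $V_{T,\flt}$ is decreasing and $V_{T,\flt}(\alpha_m)>V_{T,\flt}(\alpha_{m-1})$, necessarily $\alpha_m<\alpha_{m-1}$, and $V_{T,\flt}(\alpha_m)>0$, so the recursion continues. This produces a strictly decreasing sequence $\alpha_0>\alpha_1>\cdots$ in $\mathcal A$ with $V_{T,\flt}(\alpha_m)/V_{T,\flt}(\alpha_{m-1})=\theta_1$ for all $m\in\N$, which is precisely \eref{eq:V} with $\theta_2=\theta_1$.

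The only genuinely technical step is the continuity of $V_{T,\flt}$, i.e.\ the interchange of the limit in $\alpha$ with the infinite sum over the spectrum; I expect this to be the main, though mild, obstacle, with the remainder being a routine application of the intermediate value theorem.
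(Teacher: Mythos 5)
Your proof is correct and takes essentially the same route as the paper: show $V_{T,\flt}$ is continuous on $(0,\infty)$, then apply the intermediate value theorem recursively to produce $\alpha_m$ with $V_{T,\flt}(\alpha_m)=\theta_1\,V_{T,\flt}(\alpha_{m-1})$. The only difference is the tool used for continuity — you invoke dominated convergence with the dominating series $\lambda_k\bigl(\flt_{\alpha_*/2}(\lambda_k)\bigr)^2 \le (2C'_{\flt}/\alpha_*)^2\lambda_k$, while the paper argues uniform convergence of the series on compact subsets of $(0,\norm{T^*T}]$ via the same bound $\flt_\alpha(\lambda)\le C'_{\flt}/\alpha$ and $\sum_k\lambda_k<\infty$; these are interchangeable.
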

	\begin{proof}
		As discussed above, the function $V_{T,\flt}$ is decreasing. Furthermore, the series in the definition \eref{eq:V_def} converges uniformly over all compact sets of $\left(0,\norm{T^*T}\right]$ according to the first upper bound in \cref{d:of}(i). As $\alpha \mapsto \flt_\alpha\left(\lambda\right)$ is assumed to be continuous for fixed $\lambda$, this shows that $V_{T,\flt}$ as a uniformly convergent sum over continuous functions is also continuous itself on $\left(0,\norm{T^*T}\right]$.
		
		Now we set $\alpha_0 > 0$ arbitrarily and generate the sequence $\left(\alpha_n\right)_{n \in \mathbb N}$ recursively by
		\[
		\alpha_n = \max\set{ \alpha < \alpha_{n-1}}{ V_{T,\flt} \left(\alpha\right)= \theta_1 V_{T,\flt}\left(\alpha_{n-1}\right)}, \qquad n \in \mathbb N.
		\]
		As $V_{T,\flt}$ is continuous and obeys \cref{lem:v_T_flt}(ii), this maximum is attained by the intermediate value theorem. This choice clearly obeys \eref{eq:V} with $\theta_2 = \theta_1$.
	\end{proof}
	The above lemma, in particular, implies that Assumption \ref{ass:alpha} is satisfied if we consider filters such as Tikhonov, iterated Tikhonov or Showalter. 
	
	Given a sequence $\left(\alpha_n\right)_{n \in \mathbb N}$ satisfying \cref{ass:alpha}, we now determine our candidates as $\alpha_ 0 > \alpha_1 > \cdots > \alpha_{m_{\max}}$ with 
	\[
	m_{\max} = \min \set{m \in \mathbb N}{ \sigma^2 V_{T,\flt}\left(\alpha_m\right) \geq 1}.
	\]
	By \cref{lem:v_T_flt}(ii) it is ensured that $m_{\max}$ exists. Note that $m_{\max}\to \infty$ as $\sigma\to 0$. Let us denote 
	\begin{subequations}
	\begin{equation}\label{e:gsp:a}
		\mathcal{A}_{\theta_1,\theta_2} = \set{\alpha_m}{m = 0, 1, \ldots, m_{\max}}.
	\end{equation}
	Then it holds that
	\begin{eqnarray}
		&\infty > \theta_2 \ge \frac{\vap_{m}}{\vap_{m-1}} \ge \theta_1 > 1  \qquad \mbox{for all}\quad m = 1, \ldots, m_{\max}, \label{e:gsp:b}\\
		&\mbox{with}\quad
		\vap_{m} := \sigma^2 V_{T, \flt}(\alpha_m)
		\label{e:gsp:c}
	\end{eqnarray}
	\end{subequations}
	and $\vap_{m_{\max}} \approx 1$ in our asymptotic considerations as $\sigma \to 0$. Ideally, $\alpha_0$ should be chosen such that $\vap_0\approx \sigma^2$ to ensure a good performance in practice, but this does not influence our asymptotic results at all, as $\alpha_0$ is a fixed constant.  
	
	Besides $\vap_{m}$ (shorthand notation for $V_{T,\flt}(\alpha_m)$ up to $\sigma^2$), we introduce another size of variance as 
	$$
	\vop_m :=  \sigma^2\max_{k \in \N} \lambda_k \flt_{\alpha_m}(\lambda_k)^2.
	$$
	{\rev
 This measures the \enquote{amplification} effect, if we view the covariance $$\mathrm{Var}\bigl(\hat\sig_{\alpha_{m}}\bigr) = \sigma^2\flt_{\alpha_m}(\fop^*\fop)\fop^*\fop\flt_{\alpha_m}(\fop^*\fop) = \sigma^2 T^*T \flt_{\alpha_m}(T^*T)^2$$ as a linear operator on $\X$. More precisely, it holds that $\vop_m = \bigl\Vert{\mathrm{Var}\bigl({\hat\sig_{\alpha_{m}}}\bigr)}\bigr\Vert$, where $\norm{\cdot}$ denotes the operator norm.}

	\subsubsection{The SOLIT rule}
	
	Now let $\beta > 0$ be a fixed parameter, e.g., $\beta = 1$. In the end, $\beta$ can be seen as a tuning parameter of SOLIT, even though the results proven later hold independent of its specific value.
	
	For $m_1,m_2 \in \{0, 1, \ldots, m_{\max}\}$, we define the \emph{deterministic} part of the comparison between $m_1$ and $m_2$ as
	$$
	\bip_{m_1,m_2} := \norm{\E{\hat{\sig}_{\alpha_{m_1}} - \hat{\sig}_{\alpha_{m_2}}}} = \norm{{\sig}_{\alpha_{m_1}}-{\sig}_{\alpha_{m_2}}}, \quad \mbox{with } \sig_\alpha = \flt_\alpha(\fop^*\fop) \fop^*\fop \sigdag,
	$$
	the \emph{random} part as  
	$$
	\nop_{m_1,m_2} := \norm{\err_{\alpha_{m_1}} - \err_{\alpha_{m_2}}},\quad\mbox{with }\err_{\alpha} := \flt_\alpha \left(\fop^*\fop\right) \fop^*\err,
	$$
	and the \emph{variance} (arising from the random part) as
	$$
	\vap_{m_1, m_2} := \trace\Bigl(\mathrm{Var}\bigl({\hat\sig_{\alpha_{m_1}} - \hat\sig_{\alpha_{m_2}}}\bigr)\Bigr)  = \sigma^2\E{\nop_{m_1,m_2}^2} = \sigma^2 \sum_{k = 1}^\infty\lambda_k \bigl(\flt_{\alpha_{m_1}}(\lambda_k)-\flt_{\alpha_{m_2}}(\lambda_k)\bigr)^2.
	$$
	These terms arise naturally from the identity 
    $$
    \hat{\sig}_{\alpha_{m_1}} - \hat{\sig}_{\alpha_{m_2}}\; = \; \left({\sig}_{\alpha_{m_1}} - {\sig}_{\alpha_{m_2}}\right) \,+\,\sigma\left( \err_{\alpha_{m_1}} - \err_{\alpha_{m_2}}\right).
    $$
	Note that $\bip_{m_1,m_2} = \bip_{m_2,m_1}$, $\nop_{m_1,m_2} = \nop_{m_2,m_1}$ and $\vap_{m_1,m_2} = \vap_{m_2,m_1}$, but we often use such notation with the convention that the first index is smaller than the second index. Given $m_1 \le m_2$, it holds that  $\alpha_{m_1} \ge \alpha_{m_2}$ and $0 \le \flt_{\alpha_{m_1}} (\lambda_k)\le \flt_{\alpha_{m_2}}(\lambda_k)$ by the filter properties in \cref{d:of}. Thus, 
	$$
	\vap_{m_1, m_2} = \sigma^2 \sum_{k = 1}^\infty\lambda_k \bigl(\flt_{\alpha_{m_2}}(\lambda_k)-\flt_{\alpha_{m_1}}(\lambda_k)\bigr)^2 \le \sigma^2 \sum_{k = 1}^\infty\lambda_k \flt_{\alpha_{m_2}}(\lambda_k)^2 = \vap_{m_2} < \infty.
	$$
	
	Similar to the spirit of standard Lepski\u{\i}-type  principles, we introduce {\rev an} \emph{oracle} choice $\alpha_* := \alpha_{m_*}$, {\rev which} is defined as 
	\begin{equation}\label{e:orcr}
		m_* := \min\set{m\in \N_0}{\max_{m_2 > m_1 \ge m}\bigl(\bip_{m_1,m_2}^2 - \beta^2\,\vap_{m_1,m_2}\bigr)\le 0},
	\end{equation}
	{\rev see also \cite{SpWi19}; If the set in \eref{e:orcr} is empty, we define $m_* = m_{\max}$.} Note that $m_*$ is deterministic, and independent of the noise $\err$. However, this oracle rule is practically useless, since $\bip_{m_1,m_2}$ is not accessible. A natural modification is to replace $\bip_{m_1,m_2}$ by its empirical counterpart 
	$$
	\hat{\bip}_{m_1, m_2}:= \snorm{{\hat{\sig}_{\alpha_{m_1}} - \hat{\sig}_{\alpha_{m_2}}}}.
	$$
	This leads to the following rule of parameter choice. 
	\begin{definition}[SOLIT]\label{d:solit}
		Given a decreasingly ordered set of candidate parameters $\{\alpha_0 > \alpha_1 > \cdots > \alpha_{m_{\max}}\}$, the  \emph{Sharp Optimal Lepski\u{\i}-Inspired Tuning} (SOLIT) rule $\hat\alpha := \alpha_{\hat m}$ is defined as 
		\begin{subequations}
		\begin{equation}\label{e:limit:a}
			\hat m := \min\set{m_1\in \N_0}{\max_{m_2 > m_1} \bigl(\hat{\bip}_{m_1, m_2} - \thd_{m_1,m_2}\bigr) \le 0},
		\end{equation}
		where $\thd_{m_1, m_2}\in \R_{\ge 0}$ is some proper threshold given by
		\begin{equation}\label{e:limit:b}
			\thd_{m_1, m_2} = \sigma z_{m_1,m_2}(x_{m_1}) + \beta \sqrt{\vap_{m_1, m_2}},
		\end{equation}
		with
		\begin{equation}\label{e:limit:c}
			z_{m_1,m_2}(x)\quad \mbox{such that} \quad\Prob{\nop_{m_1,m_2} > z_{m_1,m_2}(x)} = e^{-x},
		\end{equation}
		and
		\begin{equation}\label{e:limit:d}
			x_m = 2(1+\gamma)\log \frac{\vap_{m+1}}{\vap_0}\qquad \mbox{for}\quad \gamma > 0.
		\end{equation}
		\end{subequations}
	\end{definition}
	
	\subsection{Adaptive minimax convergence rates via SOLIT}\label{sec:solit_rates}
	
	With the minimax paradigm from \cref{s:moa}, we are now in position to derive results for filter based regularization \eref{e:ualhat} with $\alpha$ chosen according to SOLIT. 
		
	\subsubsection{An oracle inequality}
	Using the results from \cite{SpWi19}, we can now state the following oracle inequality:
	\begin{theorem}[Oracle inequality~\cite{SpWi19}]\label{th:sw19}
		Assume the model in~\emph{\eref{e:model}}, and that Assumptions~\ref{a:scc} and~\ref{a:ss} hold. Let $\hat\sig_\alpha = \flt_\alpha(\fop^*\fop) \fop^* \data
		$ with $\flt_\alpha(\cdot)$ an ordered filter and let the oracle choice $\alpha_{m_*}$ in \emph{\eref{e:orcr}} and the SOLIT rule $\alpha_{\hat m} \in \mathcal{A}_{\theta_1, \theta_2}\equiv \{\alpha_0, \ldots, \alpha_{m_{\max}}\}$ in \emph{\eref{e:gsp:a}}--\emph{\eref{e:gsp:c}} with $\hat m \equiv \hat m(\beta, \gamma)$ in \cref{d:solit}. Then, it holds that
		$$
		\E{\norm{\hat \sig_{\alpha_{\hat m}} - \sigdag}^2} \le C_{1,m_*} \risk_{m_*} + \left(\sqrt{\risk_{m_*}} + C_{2,m_*} \right)^{2}, 
		$$
		where $\risk_{m_*} := \sE{\snorm{\hat \sig_{\alpha_{m_*}} - \sigdag}^2}$, and
		\begin{eqnarray*}
			&C_{1,m_*} &:= \frac{2\sqrt{3}}{\theta_1^\gamma - 1}\left(\frac{\vap_0}{\vap_{m_*}}\right)^{1+ \gamma}\; \le\; \frac{2\sqrt{3}}{\theta_1^\gamma - 1},\\
			\mbox{and}\quad 
			&C_{2,m_*} &:= \beta \sqrt{\vap_{m_*}} + \sqrt{2\vop_{m_*}\left(2(1+\gamma)\log\frac{\vap_{m_*}}{\vap_{0}} + \log(1+ m_{\max}) \right)}.
		\end{eqnarray*}
	\end{theorem}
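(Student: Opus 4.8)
This statement transcribes the \enquote{smallest accepted} oracle inequality of \cite{SpWi19} to the present spectral-regularization setting. The plan is: first, embed the SOLIT rule of \cref{d:solit} into that abstract framework; second, verify its hypotheses from the structural facts collected above; third, track the Gaussian tail constants specific to the model \eqref{e:model} so as to read off the explicit $C_{1,m_*}$ and $C_{2,m_*}$.

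\emph{Set-up.} I would view $\hat\sig_{\alpha_0},\ldots,\hat\sig_{\alpha_{m_{\max}}}$ as an ordered family graded by the variance proxy $\vap_m=\sigma^2 V_{T,\flt}(\alpha_m)=\trace(\mathrm{Var}(\hat\sig_{\alpha_m}))$, which grows geometrically by \cref{lem:v_T_flt}, \cref{ass:alpha} and \eqref{e:gsp:b}. The identity $\hat\sig_{\alpha_{m_1}}-\hat\sig_{\alpha_{m_2}}=(\sig_{\alpha_{m_1}}-\sig_{\alpha_{m_2}})+\sigma(\err_{\alpha_{m_1}}-\err_{\alpha_{m_2}})$ splits each pairwise discrepancy into a deterministic part $\bip_{m_1,m_2}$ and a centred Gaussian part $\sigma\nop_{m_1,m_2}$, so $|\hat\bip_{m_1,m_2}-\bip_{m_1,m_2}|\le\sigma\nop_{m_1,m_2}$. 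The oracle $m_*$ of \eqref{e:orcr} is exactly the first index past which the deterministic part is dominated by the Gaussian scale, $\bip_{m_1,m_2}\le\beta\sqrt{\vap_{m_1,m_2}}$ for all $m_2>m_1\ge m_*$, where also $\vap_{m_1,m_2}\le\vap_{m_2}$ and $\vop_{m_*}\le\vap_{m_*}=\trace(\mathrm{Var}(\hat\sig_{\alpha_{m_*}}))\le\risk_{m_*}$.

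\emph{Probabilistic inputs.} Next I would note that $\nop_{m_1,m_2}$ is the $\X$-norm of a centred Gaussian with covariance eigenvalues $\lambda_k(\flt_{\alpha_{m_1}}(\lambda_k)-\flt_{\alpha_{m_2}}(\lambda_k))^2$; since \eqref{eq:V} forces $\vap_{m_1}\ne\vap_{m_2}$, hence $\flt_{\alpha_{m_1}}\not\equiv\flt_{\alpha_{m_2}}$, for $m_1\ne m_2$, this Gaussian is nondegenerate, $\nop_{m_1,m_2}$ has an atomless law, and $z_{m_1,m_2}(x)$ in \eqref{e:limit:c} is well defined with the exact tail $\Prob{\nop_{m_1,m_2}>z_{m_1,m_2}(x)}=e^{-x}$. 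Two Gaussian facts then supply the quantitative content: the Borell--TIS / Gaussian concentration inequality yields $z_{m_1,m_2}(x)\le\sigma^{-1}\bigl(\sqrt{\vap_{m_1,m_2}}+\sqrt{2x\,\vop_{m_2}}\bigr)$, since $\mathbb E[\nop_{m_1,m_2}^2]=\vap_{m_1,m_2}/\sigma^2$ and the covariance operator norm is $\max_k\lambda_k(\flt_{\alpha_{m_1}}(\lambda_k)-\flt_{\alpha_{m_2}}(\lambda_k))^2\le\vop_{m_2}/\sigma^2$; and $\mathbb E\snorm{G}^4=(\trace\Sigma)^2+2\trace(\Sigma^2)\le 3(\trace\Sigma)^2$ for a centred Gaussian $G$ with covariance $\Sigma$, which is the source of the $\sqrt 3$ in $C_{1,m_*}$.

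\emph{The dichotomy.} On $\{\hat m\le m_*\}$, acceptance of $\hat m$ in \eqref{e:limit:a} gives $\snorm{\hat\sig_{\alpha_{\hat m}}-\hat\sig_{\alpha_{m_*}}}=\hat\bip_{\hat m,m_*}\le\thd_{\hat m,m_*}$ when $\hat m<m_*$ (and $0$ when $\hat m=m_*$); bounding $\thd_{m_1,m_*}=\sigma z_{m_1,m_*}(x_{m_1})+\beta\sqrt{\vap_{m_1,m_*}}$ uniformly over $m_1<m_*$ via the Borell--TIS estimate with $x_{m_1}\le x_{m_*-1}=2(1+\gamma)\log(\vap_{m_*}/\vap_0)$ controls this overshoot by $\sqrt{\vap_{m_*}}+C_{2,m_*}$, the $\log(1+m_{\max})$ slack in $C_{2,m_*}$ being the price of the maximal-inequality control uniform over the finitely many candidate indices. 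Hence $\snorm{\hat\sig_{\alpha_{\hat m}}-\sigdag}\le\snorm{\hat\sig_{\alpha_{m_*}}-\sigdag}+\sqrt{\vap_{m_*}}+C_{2,m_*}$ on this event; squaring, taking expectations, and using $\mathbb E\snorm{\hat\sig_{\alpha_{m_*}}-\sigdag}\le\sqrt{\risk_{m_*}}$ together with $\vap_{m_*}\le\risk_{m_*}$ yields the term $(\sqrt{\risk_{m_*}}+C_{2,m_*})^2$. On $\{\hat m>m_*\}$, the index $m_*$ has been rejected, i.e.\ $\hat\bip_{m_*,m_2}>\thd_{m_*,m_2}$ for some $m_2>m_*$, which by the oracle property and Borell--TIS forces $\nop_{m_*,m_2}>z_{m_*,m_2}(x_{m_*})$, of probability $e^{-x_{m_*}}=(\vap_0/\vap_{m_*+1})^{2(1+\gamma)}$ for each such $m_2$; a Cauchy--Schwarz split $\mathbb E\bigl[\snorm{\hat\sig_{\alpha_{\hat m}}-\sigdag}^2\mathbf{1}\{\hat m>m_*\}\bigr]\le\bigl(\mathbb E\snorm{\hat\sig_{\alpha_{\hat m}}-\sigdag}^4\bigr)^{1/2}\Prob{\hat m>m_*}^{1/2}$, with the fourth moment controlled over the finite candidate set by the Gaussian moment identity and the overshoot probabilities summed across the levels $m>m_*$ using \eqref{e:gsp:b} --- the geometric series producing the factor $(\theta_1^\gamma-1)^{-1}$ and the prefactor $(\vap_0/\vap_{m_*})^{1+\gamma}$ --- yields the term $C_{1,m_*}\risk_{m_*}$. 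Adding the two contributions gives the claimed inequality.

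\emph{Main obstacle.} The crux is the joint calibration of the critical values \eqref{e:limit:b}--\eqref{e:limit:d}: the level $x_m$ must be large enough that $\Prob{\hat m>m_*}$ decays like a sufficiently high power of $\vap_0/\vap_{m_*}$ to be absorbed, after the Cauchy--Schwarz step against a fourth moment of size $\mathcal{O}(1)$, into the term $C_{1,m_*}\risk_{m_*}$ --- whose prefactor $(\theta_1^\gamma-1)^{-1}(\vap_0/\vap_{m_*})^{1+\gamma}$ is exactly the summed overshoot weight --- yet small enough that the deterministic overshoot $\max_{m_1<m_*}\thd_{m_1,m_*}$ never exceeds the oracle scale beyond the mild surcharges $\sqrt{\vop_{m_*}\log(\vap_{m_*}/\vap_0)}$ and $\sqrt{\vop_{m_*}\log(1+m_{\max})}$ that make up $C_{2,m_*}$. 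Verifying that the two-sided growth \eqref{e:gsp:b} --- in particular the finiteness $\theta_2<\infty$, which makes the sum over overshoot levels converge --- is precisely what resolves this tension, and then matching the resulting constants to the stated ones, is the step that \emph{extends} the abstract result of \cite{SpWi19}; the remainder is bookkeeping.
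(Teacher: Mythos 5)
Your proposal attempts a from-scratch reproof of the abstract oracle inequality of \cite{SpWi19}, whereas the paper's proof takes a much shorter route: it verifies that the present spectral setting satisfies the two structural hypotheses needed to invoke Theorem~B.1 and Proposition~B.2 of \cite{SpWi19}, namely (i) \emph{decreasingly ordered bias}, $\snorm{\sig_{\alpha_m} - \sigdag} \le \snorm{\sig_{\alpha_{m_*}} - \sigdag}$ for $m > m_*$, which the paper derives from $C_q''=1$ and $q_\alpha(\cdot)\ge 0$ via $0 \le 1 - \lambda_k q_{\tilde\alpha}(\lambda_k) \le 1 - \lambda_k q_\alpha(\lambda_k)$ for $\tilde\alpha < \alpha$, and (ii) \emph{exponentially summable variance ratios}, $\sum_{m>m_*}(\vap_{m_*}/\vap_m)^\gamma \le (\theta_1^\gamma-1)^{-1}$, obtained by telescoping the lower bound in \eqref{e:gsp:b} — and then simply cites that abstract result. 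Your sketch does identify the probabilistic ingredients that one can expect sit inside \cite{SpWi19} (Borell--TIS concentration for the quantiles $z_{m_1,m_2}$, the Gaussian fourth-moment bound responsible for the $\sqrt 3$, the split over $\{\hat m\le m_*\}$ and $\{\hat m > m_*\}$ with a Cauchy--Schwarz argument, the geometric overshoot sum giving $(\theta_1^\gamma-1)^{-1}$), which is informative in its own right; but as a substitute for the paper's proof it has gaps. In particular, you never verify the ordered-bias condition (i), which is one of the two hypotheses the paper explicitly checks and is structurally essential for $m_*$ to serve as the oracle index in the SpWi19 framework; the provenance of the $\log(1+m_{\max})$ summand inside $C_{2,m_*}$ is asserted (``maximal-inequality control'') but not derived, and indeed the naive uniform bound on $\thd_{m_1,m_*}$ over $m_1<m_*$ that your sketch suggests would not by itself produce it; and the hardest calibrations are deferred to the \emph{main obstacle} paragraph without being carried out. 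Finally, the remark that your argument ``extends the abstract result of \cite{SpWi19}'' misreads the situation: the paper's proof shows the theorem is a direct instantiation of the cited abstract result once (i) and (ii) are checked, not an extension of it.
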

	
	\begin{proof}
		This can be shown by casting the abstract and general result developed in \cite{SpWi19} into our particular setup. To this end, we only need to check the following two conditions:
		\begin{enumerate}[label={(\roman*)}]
			\item \label{as:ob}
			Decreasingly ordered bias, i.e.
			$$
			\norm{\sig_{\alpha_m} - \sigdag} \le \norm{\sig_{\alpha_{m_*}} - \sigdag}\qquad \mbox{for all}\quad m > m_*.
			$$
			\item \label{as:gv}
			Exponentially growing variance, i.e.
			$$
			\sum_{m > m_*} \left(\frac{\vap_{m_*}}{\vap_{m}}\right)^\gamma \le C < \infty,
			$$
			with $C$ a constant independent of $m_*$ and $\gamma$.
		\end{enumerate}
		Note that the properties \eref{flt:upb} and \eref{flt:ord} of ordered filter $\flt_{\alpha}(\cdot)$ in \cref{d:of}, in particular, the assumptions $C_q'' = 1$ and $\flt_{\alpha}(\cdot) \ge 0$, imply that 
		$$
		0 \le 1 - \lambda_k q_{\tilde{\alpha}}(\lambda_k) \le 1 - \lambda_k q_{\alpha}(\lambda_k)\qquad \mbox{for all}\quad k \in \N \quad\mbox{and}\quad \tilde{\alpha} < \alpha.
		$$
		Thus, the condition \ref{as:ob} above is satisfied. 
		
		From \eref{e:gsp:b} it follows that 
		$$
		\frac{\vap_{m_*}}{\vap_{m_* + k}} = \frac{\vap_{m_*}}{\vap_{m_* + 1}} \cdots \frac{\vap_{m_* + k-1}}{\vap_{m_* + k}} \le \left(\frac{1}{\theta_1}\right)^k, \qquad k \in \N.
		$$
		Further, noting $\theta_1 > 1$, we have  
		$$
		\sum_{m > m_*} \left(\frac{\vap_{m_*}}{\vap_{m}}\right)^\gamma \le \sum_{k = 1}^\infty \left(\frac{1}{\theta_1}\right)^{k\gamma} = \frac{1}{\theta_1^\gamma - 1}, 
		$$
		i.e.\  the condition \ref{as:gv} above is also satisfied for every $\gamma > 0$. 
		
		Then, the assertion of this theorem follows immediately from Theorem B.1 and Proposition B.2 in the {\rev supplement} of \cite{SpWi19}.
	\end{proof}
	
	\begin{remark}\label{r:c2ms}
		In {\rev many} cases, we have $\vap_0 / \vap_{m_*} \to 0$ as $\sigma\to 0$, and then $C_{1,m_*} \to 0$. Thus, the price that one needs to pay for the SOLIT rule, in comparison with the oracle choice, is asymptotically determined by $C_{2,m_*}$. The first term in $C_{2,m_*}$ can be bounded using the fact that $\vap_{m_*} \le \risk_{m_*}$. Note that $\vop_m$ are increasing with respect to $m$, and we can thus bound  the second term in $C_{2,m_*}$ by bounding $m_*$ from above. The term $\log(1+m_{\max})$ in $C_{2,m_*}$ is of order $\log \log (1/\sigma^2)$ by \eref{e:gsp:b}, thus being asymptotically negligible in usual situations. 
	\end{remark}
	
	\subsubsection{Oracle risks}
	Next we recall some results on the convergence rates of a priori choice rules for parameters in the literature. To state those, we require a rather general assumption on the decay behaviour of the singular values. Functionals of $\fop^*\fop$ can be determined by functionals on eigenvalues $\{\lambda_k\}_{k \in \N}$ of $\fop^*\fop$, for instance, $\trace(\fop^*\fop) = \sum_{k=1}^\infty \lambda_k$. Calculations that involve summations over $\{\lambda_k\}_{k \in \N}$ can be formulated as Lebesgue--Stieltjes integrals with respect to the counting measure
	\begin{equation}\label{e:cm}
		\Sigma(x) := \Sigma\bigl([x, \infty)\bigr):= \#\set{k\in \N}{\lambda_k \ge x}.
	\end{equation}
	We employ the techniques developed in \cite{BHMR07} to tackle the difficulty caused by the discontinuity of $\Sigma(\cdot)$ and assume:
 {\rev 
 \begin{assumption}[Smooth surrogate]\label{a:ss}
		\begin{enumerate}
			\item \label{a:ss1}
            There exists a continuous surrogate function $S: (0, \lambda_1] \to [0, \infty)$ of $\Sigma(\cdot)$ in \eref{e:cm} satisfying
			\begin{eqnarray*}
				\liminf_{\alpha \to 0}\frac{S(\alpha) }{ \Sigma(\alpha)} > 0\quad\mbox{and}\quad  \lim_{\alpha \to 0} \alpha S(\alpha) = 0.
			\end{eqnarray*}
			\item\label{a:ss2}
			There exists a constant $C_S > 0$ such that 
			$$
			\frac{1}{\alpha}\int_0^\alpha \Sigma(t) \diff t \le C_{S}S(\alpha) \qquad \mbox{ for all } \quad \alpha \in \mathcal{A}. 
			$$
		\end{enumerate} 
	\end{assumption}}
		 
	\begin{lemma}[Bias--variance decomposition~\cite{BHMR07}]\label{l:bvd}
		Assume the model in~\eref{e:model} and suppose that Assumptions~\ref{a:scc}, \ref{a:qlf} and~\ref{a:ss} hold. Let $\hat\sig_\alpha = \flt_\alpha(\fop^*\fop) \fop^* \data
		$ with $\flt_\alpha(\cdot)$ an ordered filter. Then there exist constants $C_{\rm b}$ and $C_{\rm v}$ such that
		$$
		\sup_{\sigdag \in  \mathcal{W}_{\varphi} (\rho)} \E{\norm{\hat \sig_\alpha - \sigdag}^2} \le C_{\rm b} \varphi(\alpha)^2 + C_{\rm v} \sigma^2 \frac{S(\alpha)}{\alpha} \qquad \mbox{for}\quad \alpha \in \mathcal{A}.
		$$
		More precisely, $C_{\rm b} := C_{\varphi}^2\rho^2$ and $C_{\rm v}:= C_S\max\{C_q',C_q''\}^2$,
  with $C_q'$ and $C_q''\equiv 1$ as in \cref{d:of}, and $C_{\varphi}$ in \cref{a:qlf}. 
	\end{lemma}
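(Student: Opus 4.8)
The plan is to split the MSE into bias and variance parts in the standard way, then bound each separately using the assumptions at hand. Writing $\hat\sig_\alpha - \sigdag = (\sig_\alpha - \sigdag) + \sigma\err_\alpha$ with $\sig_\alpha = \flt_\alpha(\fop^*\fop)\fop^*\fop\sigdag$ and $\err_\alpha = \flt_\alpha(\fop^*\fop)\fop^*\err$, and noting $\E{\err_\alpha} = 0$, we get $\E{\norm{\hat\sig_\alpha - \sigdag}^2} = \norm{\sig_\alpha - \sigdag}^2 + \sigma^2\E{\norm{\err_\alpha}^2}$. The first term is the squared bias, the second is the trace of the variance, i.e.\ $\sigma^2 V_{T,\flt}(\alpha)$ in the notation of \eref{eq:V_def}.

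For the bias term, I would use the source condition $\sigdag = \varphi(\fop^*\fop)g$ with $\norm{g} \le \rho$ from \cref{a:scc}(i). Then $\sig_\alpha - \sigdag = -\bigl(1 - \fop^*\fop\,\flt_\alpha(\fop^*\fop)\bigr)\varphi(\fop^*\fop)g$, so, passing to the eigenbasis, $\norm{\sig_\alpha - \sigdag}^2 = \sum_k \bigl(1 - \lambda_k\flt_\alpha(\lambda_k)\bigr)^2\varphi(\lambda_k)^2 g_k^2 \le \rho^2 \sup_{x}\bigl[\varphi(x)(1 - x\flt_\alpha(x))\bigr]^2$. By the qualification \cref{a:qlf} this is at most $C_\varphi^2\rho^2\,\varphi(\alpha)^2$, giving $C_{\rm b} = C_\varphi^2\rho^2$.

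For the variance term, I need $\sigma^2 V_{T,\flt}(\alpha) = \sigma^2\sum_k \lambda_k\flt_\alpha(\lambda_k)^2 \le C_{\rm v}\sigma^2 S(\alpha)/\alpha$. Using \cref{d:of}(i), $\lambda_k\flt_\alpha(\lambda_k)^2 = (\lambda_k\flt_\alpha(\lambda_k))\flt_\alpha(\lambda_k) \le \max\{C_\flt',C_\flt''\}^2 \min\{1/\alpha,\,1/\lambda_k\}$, so it suffices to bound $\sum_k \min\{1/\alpha,\,1/\lambda_k\}$. This is exactly where \cref{a:ss} and the counting-measure machinery of \cite{BHMR07} enter: rewrite the sum as a Lebesgue--Stieltjes integral $\int_0^{\lambda_1}\min\{1/\alpha,1/t\}\diff(-\Sigma(t))$, then integrate by parts to transfer the discontinuous $\Sigma$ onto the smooth integrand, yielding something of the form $\Sigma(\alpha)/\alpha + \frac{1}{\alpha^2}\int_0^\alpha\Sigma(t)\diff t$ up to constants; \cref{a:ss}(ii) then controls the integral term by $C_S S(\alpha)/\alpha$, and (a constant multiple of) $S(\alpha)\gtrsim\Sigma(\alpha)$ from the $\liminf$ condition in \cref{a:ss}(i) absorbs the first term. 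I expect the integration-by-parts step handling the discontinuity of $\Sigma$ to be the main technical obstacle; the rest is bookkeeping of constants, and the final $C_{\rm v} = C_S\max\{C_\flt',C_\flt''\}^2$ comes out of combining the filter bound with \cref{a:ss}(ii).
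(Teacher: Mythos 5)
Your bias bound is exactly the paper's: source condition, spectral representation, qualification, giving $C_{\mathrm b} = C_\varphi^2\rho^2$. The variance bound, however, has a genuine gap. You estimate
\[
\lambda_k\flt_\alpha(\lambda_k)^2 \;=\; \bigl(\lambda_k\flt_\alpha(\lambda_k)\bigr)\flt_\alpha(\lambda_k) \;\le\; \max\{C_\flt',C_\flt''\}^2\,\min\bigl\{1/\alpha,\,1/\lambda_k\bigr\},
\]
and then try to bound $\sum_k \min\{1/\alpha,1/\lambda_k\}$. But this sum is $+\infty$: for all but finitely many $k$ we have $\lambda_k < \alpha$, and for those $k$ the minimum equals $1/\alpha$, so the sum contains infinitely many copies of $1/\alpha$. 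Equivalently, writing it as a Stieltjes integral, the piece $\frac{1}{\alpha}\int_0^\alpha \diff(-\Sigma)$ equals $\frac{1}{\alpha}\bigl(\Sigma(0^+)-\Sigma(\alpha)\bigr)=\infty$ since $\Sigma(0^+)=\infty$. No integration by parts can rescue a divergent integral, and the intermediate expression you quote, $\Sigma(\alpha)/\alpha + \alpha^{-2}\int_0^\alpha\Sigma$, is not what your integrand produces; it is what the correct integrand produces.

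What the paper actually uses, and what you need, is the sharper two-sided filter bound
\[
x\,\flt_\alpha(x)^2 \;\le\; C_q^2\,\min\bigl\{x/\alpha^2,\ 1/x\bigr\},\qquad C_q=\max\{C_q',C_q''\},
\]
obtained from $\flt_\alpha(x)^2 \le (C_q'/\alpha)^2$ on $x\le\alpha$ and $\bigl(x\flt_\alpha(x)\bigr)^2/x \le (C_q'')^2/x$ on $x\ge\alpha$. Near $x=0$ the factor $x/\alpha^2$ (rather than $1/\alpha$) is essential, since it makes $-\alpha^{-2}\int_0^\alpha x\,\diff\Sigma(x)=\alpha^{-2}\sum_{\lambda_k\le\alpha}\lambda_k$ finite (the operator is Hilbert--Schmidt, so $\sum\lambda_k<\infty$). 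With this bound the Stieltjes integral converges, the boundary term $x\Sigma(x)\big|_0$ vanishes by \cref{a:ss}(i) (via $\alpha S(\alpha)\to 0$ and $\liminf S/\Sigma>0$), and after integration by parts the two $\Sigma(\alpha)/\alpha$ contributions from the two pieces of the minimum cancel exactly, leaving only $\alpha^{-2}\int_0^\alpha\Sigma(t)\diff t\le C_S S(\alpha)/\alpha$ by \cref{a:ss}(ii). Thus the paper obtains $C_{\mathrm v}=C_S C_q^2$ cleanly; your plan to ``absorb'' a leftover $\Sigma(\alpha)/\alpha$ via the $\liminf$ condition is neither necessary (it cancels) nor, in your version, well-posed (the sum you start from is already infinite).
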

	
	\begin{proof}
		It follows essentially from Theorem~3 in \cite{BHMR07}. {\rev As we slightly simplify (or modify) the requirement on the smooth surrogate of \cite{BHMR07}, we provide below the upper bound on the variance part for completeness. Let $C_q := \max\{C_q', C_q''\}$. Then, by \cref{d:of},
  \begin{eqnarray*}
      \E{\norm{\hat \sig_\alpha - \E{\hat\sig_{\alpha}}}^2} & = \sigma^2 \sum_{k =1}^{\infty} \lambda_k \flt_{\alpha}(\lambda_k)^2 = -\sigma^2\int_{0}^\infty x\flt_{\alpha}(x)^2\mathrm{d}\Sigma(x)\\
      & \le C_q^2\sigma^2\left(-\frac{1}{\alpha^2}\int_{0}^{\alpha}x\mathrm{d}\Sigma(x) - \int_{\alpha}^{\infty}\frac{1}{x}\mathrm{d}\Sigma(x) \right).
  \end{eqnarray*}
  Note that $\lim_{\alpha \to 0} \alpha \Sigma(\alpha) = \lim_{\alpha \to 0}\bigl(\alpha S(\alpha)\bigr)\bigl( \Sigma(\alpha)/ S(\alpha)\bigr) = 0 $, by \cref{a:ss}\eref{a:ss1}. Further, we apply partial integration and obtain
\begin{eqnarray*}
\E{\norm{\hat \sig_\alpha - \E{\hat\sig_{\alpha}}}^2} & \le C_q^2\sigma^2\left(-\frac{1}{\alpha^2}\int_{0}^{\alpha}x\mathrm{d}\Sigma(x) - \frac{1}{\alpha}\int_{\alpha}^{\infty}\mathrm{d}\Sigma(x) \right)\\
& = C_q^2\sigma^2\left(-\frac{\Sigma(\alpha)}{\alpha} + \frac{1}{\alpha^2}\int_{0}^{\alpha}\Sigma(x)\mathrm{d}x + \frac{\Sigma(\alpha)}{\alpha}\right)\\
& \le C_SC_q^2\sigma^2\frac{S(\alpha)}{\alpha},
\end{eqnarray*}
where the last inequality is due to \cref{a:ss}\eref{a:ss2}.}
	\end{proof}
	
	\begin{remark}\label{r:so}
		In fact, the term $C_{\rm b} \varphi(\alpha)^2$ in \cref{l:bvd} is an upper bound for the squared bias part $\snorm{\sig_\alpha-\sigdag}^2$ and the term $C_{\rm v} \sigma^2 {S(\alpha)}/{\alpha}$ is for the variance part $\sE{\snorm{\hat\sig_\alpha-\sig_\alpha}^2}$. {\rev In order to derive an as sharp upper bound for the risk as possible, one is often searching for the smallest possible smooth surrogate $S(\cdot)$ of $\Sigma(\cdot)$. For such $S(\cdot)$, we usually have $\limsup_{\alpha\to 0}S(\alpha)/\Sigma(\alpha) < \infty$, even though this is not required in \cref{a:ss} (cf.\ \cref{s:rates}). In this situation,} the bounds in \cref{l:bvd} are often sharp up to multiplying constants, since they lead to minimax optimality in order in many situations, as detailed in \cite{BHMR07}.
	\end{remark}
	As a consequence, we can find {\rev a reasonable} $\alpha$ by minimizing the upper bound of the risk in  \cref{l:bvd}. Recall that we consider candidate parameters in $\mathcal{A}_{\theta_1, \theta_2} = \{\alpha_0, \ldots, \alpha_{m_{\max}}\}$, a discrete subset of $\mathcal{A}$. Thus, we define
	\begin{equation}\label{e:orcm}
		m_{\diamond} := \argmin_{m\in \N_0}\left(C_{\rm b} \varphi(\alpha_m)^2 + C_{\rm v} \sigma^2 \frac{S(\alpha_m)}{\alpha_m}\right).
	\end{equation}
	Based on it, the ordered filters are able to achieve (up to possible logarithmic factors) the minimax optimal rates over various smoothness classes for mildly and severely ill-posed problems (cf.\ \cite{BHMR07}). Recall, however, that the performance of SOLIT is evaluated with respect to a different oracle rule defined in~\eref{e:orcr}. Thus, an important step in deriving the explicit rates for ordered filters with the SOLIT rule in particular situations is to investigate the relations between these two oracle rules. 
	
	\begin{theorem}[Comparison of oracles]\label{t:orc}
		Assume the model in~\eref{e:model} and suppose that Assumptions~\ref{a:scc}, \ref{a:qlf} and~\ref{a:ss} hold.  Let $\hat\sig_\alpha = \flt_\alpha(\fop^*\fop) \fop^* \data
		$ with $\flt_\alpha(\cdot)$ an ordered filter. Let also $m_*$ be defined in \eref{e:orcr}, and $m_{\diamond}$ in \eref{e:orcm}. Then, $m_* \le \log_{\theta_1}\bigl({C_1 C_2 S(\alpha_{m_{\diamond}})}/{\alpha_{m_\diamond}}\bigr)$ and, for $\sigdag \in  \mathcal{W}_{\varphi} (\rho)$,
		$$
		\risk_{m_*} \le C_1(2\beta^2 + 4)\risk_{m_{\diamond}} \le C_1 (2\beta^2 + 4) \min_{\alpha \in \mathcal{A}_{\theta_1, \theta_2}} \left(C_{\rm b} \varphi(\alpha)^2 + C_{\rm v} \sigma^2 \frac{S(\alpha)}{\alpha}\right),
		$$
		where $\risk_{m} = \sE{\snorm{\hat \sig_{\alpha_{m}} - \sigdag}^2}$ for $m \in \N_0$, $C_1=1+\max\{(\sqrt{\theta_1}-1)^2\beta^2, \, \theta_2 (\sqrt{\theta_1}-1)^{-2}\beta^{-2}\}$ and $C_2$ is a constant depending only on $\rho, \varphi,S, C_q'$.
	\end{theorem}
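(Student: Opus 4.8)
The plan is to reduce the pairwise oracle $m_*$ to a balance between the single-index squared bias $b_m^2 := \snorm{\sig_{\alpha_m}-\sigdag}^2$ and the single-index variance $\vap_m$, and then to compare this balance with the one defining $m_\diamond$. Two elementary facts do the work. First, as recorded in the proof of \cref{th:sw19}, the bias is decreasingly ordered, so the triangle inequality gives $\bip_{m_1,m_2}\le b_{m_1}+b_{m_2}\le 2b_{m_1}$ whenever $m_1\le m_2$. Second, writing $g_m:=\bigl(\sqrt{\lambda_k}\,\flt_{\alpha_m}(\lambda_k)\bigr)_{k}\in\ell^2$, so that $\vap_m=\sigma^2\snorm{g_m}_{\ell^2}^2$ and $\vap_{m_1,m_2}=\sigma^2\snorm{g_{m_1}-g_{m_2}}_{\ell^2}^2$, the reverse triangle inequality together with $\snorm{g_{m_1}}_{\ell^2}^2=\vap_{m_1}/\sigma^2\le\vap_{m_2}/(\theta_1\sigma^2)$ (valid for $m_1<m_2$ by \eref{e:gsp:b} and the monotonicity of $\vap$) yields
\[
\vap_{m_1,m_2}\;\ge\;\bigl(1-\theta_1^{-1/2}\bigr)^2\vap_{m_2}\;=\;c\,\vap_{m_2},\qquad c:=\frac{(\sqrt{\theta_1}-1)^2}{\theta_1},
\]
which complements the already established $\vap_{m_1,m_2}\le\vap_{m_2}$. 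Throughout I also use that $\vap_m$ is increasing with $\theta_1\le\vap_m/\vap_{m-1}\le\theta_2$, and that $\risk_m=b_m^2+\vap_m$.

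From the two inequalities one reads off a sufficient condition for acceptance: if $4b_m^2\le\beta^2c\,\vap_{m+1}$, then $\bip_{m_1,m_2}^2\le 4b_{m_1}^2\le 4b_m^2\le\beta^2c\,\vap_{m+1}\le\beta^2c\,\vap_{m_2}\le\beta^2\vap_{m_1,m_2}$ for every pair $m_2>m_1\ge m$, so $m$ satisfies the defining inequality in \eref{e:orcr}. Since $b_m^2/\vap_{m+1}$ is non-increasing in $m$ and tends to $0$, the index $\tilde m:=\min\{m:4b_m^2\le\beta^2c\,\vap_{m+1}\}$ is finite, $m_*\le\tilde m$, and, by minimality of $\tilde m$, $4b_{\tilde m-1}^2>\beta^2c\,\vap_{\tilde m}$ whenever $\tilde m\ge1$.

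I then distinguish two cases. If $m_*\le m_\diamond$ (which holds in particular once $m_\diamond\le m_{\max}$, the regime relevant as $\sigma\to0$; the degenerate case $m_\diamond>m_{\max}$, where $\risk_{m_\diamond}\ge\vap_{m_{\max}}\ge1$ is bounded below, is disposed of separately), then either $m_*=m_\diamond$, or $(m_*,m_\diamond)$ is an admissible pair in \eref{e:orcr}, whence $\bip_{m_*,m_\diamond}^2\le\beta^2\vap_{m_*,m_\diamond}\le\beta^2\vap_{m_\diamond}$; combining with $b_{m_*}\le\bip_{m_*,m_\diamond}+b_{m_\diamond}$ and $\vap_{m_*}\le\vap_{m_\diamond}$ gives $\risk_{m_*}\le(2\beta^2+2)\risk_{m_\diamond}$. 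If instead $m_*>m_\diamond$, then $b_{m_*}\le b_{m_\diamond}$ by monotonicity of the bias, while $\tilde m-1\ge m_*-1\ge m_\diamond$, so $\vap_{m_*}\le\vap_{\tilde m}<\tfrac{4}{\beta^2c}b_{\tilde m-1}^2\le\tfrac{4}{\beta^2c}b_{m_\diamond}^2$, giving $\risk_{m_*}\le\bigl(1+\tfrac{4}{\beta^2c}\bigr)\risk_{m_\diamond}$. Since $\theta_2\ge\theta_1$, both constants are at most $C_1(2\beta^2+4)$ with $C_1$ as stated, and the last inequality of the theorem follows from \cref{l:bvd} at $\alpha_{m_\diamond}$ together with the definition \eref{e:orcm} of $m_\diamond$ as a minimiser over $\mathcal A_{\theta_1,\theta_2}$.

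For the bound on $m_*$ itself, iterating \eref{e:gsp:b} gives $\vap_{m_*}\ge\theta_1^{m_*}\vap_0$, hence $m_*\le\log_{\theta_1}(\vap_{m_*}/\vap_0)$; plugging in $\vap_{m_*}\le 4C_1\risk_{m_\diamond}$ from the case analysis, bounding $\risk_{m_\diamond}\le C_{\rm b}\varphi(\alpha_{m_\diamond})^2+C_{\rm v}\sigma^2 S(\alpha_{m_\diamond})/\alpha_{m_\diamond}$ by \cref{l:bvd}, and recalling that $\vap_0$ is a fixed constant of order $\sigma^2$, it remains only to bound the ratio $\varphi(\alpha_{m_\diamond})^2\alpha_{m_\diamond}/\bigl(\sigma^2 S(\alpha_{m_\diamond})\bigr)$ by a constant depending only on $\rho,\varphi,S,C_q'$ --- i.e.\ to control the squared bias by the variance surrogate at the balance index $m_\diamond$ --- after which the displayed bound holds with $C_2$ collecting these constants. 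I expect the two genuinely delicate points to be: (i) this last bias-at-the-balance-point estimate, where one must exploit the convexity in \cref{a:scc}(ii) and the minimality of $m_\diamond$ to obtain a dimension-free bound even though the candidates $\alpha_m$ themselves are not geometrically spaced; and (ii) the case $m_*>m_\diamond$, where a failure of the acceptance condition has to be turned into a quantitative upper bound on $\vap_{m_*}$ --- this is precisely the role of the reverse-triangle lower bound $\vap_{m_1,m_2}\ge c\,\vap_{m_2}$. The remaining bookkeeping to reach the stated constant $C_1$ is routine.
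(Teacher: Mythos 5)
Your proposal is essentially correct and follows the same general strategy as the paper: introduce an intermediate balance index at which the squared bias and variance cross, show $m_*$ lies below it, and then compare that index to $m_\diamond$ by a case analysis. A few tactical differences and one flagged gap are worth noting.

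\emph{Bias bound.} You use the plain triangle inequality, $\bip_{m_1,m_2}\le b_{m_1}+b_{m_2}\le 2b_{m_1}$, incurring a factor $2$. The paper exploits the nonnegativity of the filter together with $C_q''=1$ to get the sharper one-sided bound $\bip_{m_1,m_2}\le b_{m_1}$: for $m_1<m_2$ one has $0\le\bigl(\flt_{\alpha_{m_2}}(\lambda)-\flt_{\alpha_{m_1}}(\lambda)\bigr)\lambda\le 1-\flt_{\alpha_{m_1}}(\lambda)\lambda$, so $\norm{\sig_{\alpha_{m_1}}-\sig_{\alpha_{m_2}}}\le\norm{\sigdag-\sig_{\alpha_{m_1}}}$. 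This is why the paper's pivot $m_{**}:=\min\{m:\bip_m^2\le(\sqrt{\theta_1}-1)^2\beta^2\vap_m\}$ has no factor of $4$ and no shift by one index; your $\tilde m$ is essentially the same object after absorbing the factor $4$ and the $\vap_{m+1}$ shift, but the cleaner bound is available and is what the paper uses. Your Cauchy--Schwarz / reverse-triangle lower bound on $\vap_{m_1,m_2}$ is identical in substance to the paper's $\vap_{m_1,m_2}\ge\vap_{m_1}\bigl(\sqrt{\vap_{m_2}/\vap_{m_1}}-1\bigr)^2$, merely factored on $\vap_{m_2}$ instead of $\vap_{m_1}$.

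\emph{Case analysis.} The paper argues in two stages: first $\risk_{m_*}\le(2\beta^2+4)\risk_{m_{**}}$ (using that $(m_*,m_{**})$ is an admissible pair in \eref{e:orcr}), then $\risk_{m_{**}}\le C_1\risk_{m_\diamond}$ by casing on $m_\diamond\gtrless m_{**}$. You instead case directly on $m_*\gtrless m_\diamond$; this works, and your constants $\max\bigl\{2\beta^2+2,\ 1+4\theta_1/\bigl(\beta^2(\sqrt{\theta_1}-1)^2\bigr)\bigr\}$ sit comfortably below the stated $C_1(2\beta^2+4)$, as you observe.

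\emph{The flagged gap.} You correctly identify that the bound on $m_*$ reduces to showing the squared bias at $\alpha_{m_\diamond}$ is dominated by (a constant times) the variance surrogate $\sigma^2 S(\alpha_{m_\diamond})/\alpha_{m_\diamond}$, with a constant depending only on $\rho,\varphi,S,C_q'$. Be aware that the paper also asserts this inequality (absorbing it into $C_2$) without spelling out the argument, so you have located precisely the step where the paper is itself terse. The informal justification is that $m_\diamond$ minimises a sum of a term decreasing in $m$ and a term increasing in $m$ under the geometric spacing \eref{e:gsp:b}, so the two terms are comparable at the minimum; making this precise for the generic $\varphi,S$ admitted by \cref{a:scc} and \cref{a:ss} is where the work lies, and that is indeed what $C_2$ is collecting.

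Overall: correct modulo that acknowledged (and shared) gap, same structure as the paper, with a slightly weaker bias bound than necessary.
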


	\begin{proof}
		Recall that $\mathcal{A}_{\theta_1, \theta_2} = \{\alpha_0, \ldots, \alpha_{m_{\max}}\}$ in \eref{e:gsp:a}--\eref{e:gsp:c}. We define
		$$
		m_{{**}} := \min\set{m \in \N_0}{\bip_{m}^2 \le (\sqrt{\theta_1} - 1)^2\beta^2 \vap_{m}}
		$$
		where $\bip_m := \norm{\sig_{\alpha_m} - \sigdag}$. Note that $\bip_m$ is decreasing (see the proof of \cref{th:sw19}), while $\vap_m$ is increasing, with respect to $m$. Thus, $m_{**}$ is well-defined. 
		
		We split the proof into four steps.
		
		Step 1. We will show $m_* \le m_{{**}}$. To this end, we consider $m_2 > m_1 > m_{{**}}$. The ordered property and $C_q'' = 1$ in \cref{d:of} imply that 
		$$
		\bip_{m_1,m_2}^2 = \norm{\sig_{\alpha_{m_2}} - f_{\alpha_{m_1}}}^2 \le \norm{\sigdag - \sig_{\alpha_{m_1}}}^2 = \bip_{m_1}^2 \le (\sqrt{\theta_1} - 1)^2\beta^2 \vap_{m_1},
		$$
		where the last inequality is due to $m_1 > m_{{**}}$ and the definition of $m_{{**}}$. 
		
		By the Cauchy--Schwarz inequality, we have 
		\begin{eqnarray*}
			\vap_{m_1,m_2} & = \sigma^2 \sum_{k = 1}^\infty\lambda_k \bigl(\flt_{\alpha_{m_1}}(\lambda_k)-\flt_{\alpha_{m_2}}(\lambda_k)\bigr)^2\\
			& = \vap_{m_1} + \vap_{m_2}- 2\sigma^2 \sum_{k = 1}^\infty\lambda_k \flt_{\alpha_{m_1}}(\lambda_k)\flt_{\alpha_{m_2}}(\lambda_k)\\
			& \ge \vap_{m_1} + \vap_{m_2} - 2\sqrt{\vap_{m_1}\vap_{m_2}}\\
			& = \vap_{m_1}\bigl(\sqrt{\vap_{m_2}/\vap_{m_1}}-1\bigr)^2.
		\end{eqnarray*}
		Note further that $\vap_{m_2}/\vap_{m_1}\ge \theta_1 > 1$ from \eref{e:gsp:b}. Then, 
		$$
		\vap_{m_1,m_2} \ge  \vap_{m_1}\left(\sqrt{\frac{\vap_{m_2}}{\vap_{m_1}}}-1\right)^2 \ge(\sqrt{\theta_1}-1)^2 \vap_{m_1}.
		$$
		Combining the derived inequalities above, we obtain 
		$$
		\bip_{m_1,m_2}^2 \le (\sqrt{\theta_1} - 1)^2\beta^2 \vap_{m_1} \le \beta^2 \vap_{m_1,m_2}.
		$$
		Namely, $m_{{**}}$ lies in the set in \eref{e:orcr}, so $m_*\le m_{{**}}$.
		
		Step 2. We will show $\risk_{m_*} \lesssim \risk_{m_{**}}$. 
		Because of $m_* \le m_{**}$ and the definition of $m_*$ in \eref{e:orcr}, it holds that
		$$
		\E{\norm{\hat\sig_{\alpha_{m_*}}-\hat\sig_{\alpha_{m_{**}}}}^2} = \bip_{m_*, m_{**}}^2 + \vap_{m_*, m_{**}} \le (1+\beta^2)\vap_{m_*, m_{**}} \le (1+\beta^2)\vap_{m_{**}},
		$$
		where the last inequality is due to the ordered property and nonnegativity of $\flt_{\alpha}(\cdot)$, see \cref{d:of}. Further, by the triangle inequality, we obtain, for $\sigdag \in \mathcal{W}_{\varphi}(\rho)$,
		\begin{eqnarray*}
			\risk_{m_*} &= \E{\norm{\hat\sig_{\alpha_{m_*}}-\sigdag}^2} \\
			& \le  2\E{\norm{\hat\sig_{\alpha_{m_*}}-\hat\sig_{\alpha_{m_{**}}}}^2} + 2\E{\norm{\hat\sig_{\alpha_{m_{**}}}-\sigdag}^2} \\
			& \le  2(1+\beta^2)\vap_{m_{**}} + 2\risk_{m_{**}}\\
			& \le (2\beta^2 + 4)\risk_{m_{**}}.
		\end{eqnarray*}
		
		Step 3. We will show $\risk_{m_{**}} \lesssim \risk_{m_{\diamond}}$. We consider two cases:
		\begin{itemize}
			\item Case: $m_{\diamond} \ge m_{**}$. The definition of $m_{**}$ and the increasing of $\vap_m$ imply 
			\begin{eqnarray*}
				\risk_{m_{**}} = \vap_{m_{**}} + \bip_{m_{**}}^2 & \le \bigl(1 + (\sqrt{\theta_1} - 1)^2\beta^2\bigr)\vap_{m_{**}}\\
				& \le\bigl(1 +(\sqrt{\theta_1} - 1)^2\beta^2\bigr)\vap_{m_{\diamond}} \\
				& \le \bigl(1 +(\sqrt{\theta_1} - 1)^2\beta^2\bigr)\risk_{m_{\diamond}}. 
			\end{eqnarray*}
			\item Case: $m_{\diamond} < m_{**}$. The definition of $m_{**}$ and the decreasing of $\bip_m$ imply
			\begin{eqnarray*}
				\risk_{m_{**}} = \vap_{m_{**}} + \bip_{m_{**}}^2 & \le \theta_2 \vap_{m_{**} - 1} + \bip_{m_{**}-1}^2 \\
				& \le \left(\frac{\theta_2}{(\sqrt{\theta_1}-1)^2\beta^2} + 1 \right)\bip_{m_{**}-1}^2 \\
				& \le \left(\frac{\theta_2}{(\sqrt{\theta_1}-1)^2\beta^2} + 1 \right)\bip_{m_{\diamond}}^2 \\
				& \le \left(\frac{\theta_2}{(\sqrt{\theta_1}-1)^2\beta^2} + 1 \right)\risk_{m_{\diamond}},
			\end{eqnarray*}
			where the first inequality above is due to \eref{e:gsp:b}.
		\end{itemize}
		Thus, combining both cases, we have $\risk_{m_{**}} \le C_1 \risk_{m_{\diamond}}$ with $C_1 := 1+\max\{(\sqrt{\theta_1}-1)^2\beta^2, \, \theta_2 (\sqrt{\theta_1}-1)^{-2}\beta^{-2}\}$. This together with Step 2 and \cref{l:bvd} leads to the second part of the assertion. 
		
		Step 4. We will show $m_{*} \lesssim \log \bigl(S(\alpha_{m_\diamond})/\alpha_{\diamond}\bigr)$. By the definition of $m_{\diamond}$ in \eref{e:orcm}, and $\vap_0 \asymp \sigma^2$, we have
		$$
		C_{\rm b} \varphi(\alpha_{m_\diamond})^2 + C_{\rm v} \sigma^2 \frac{S(\alpha_{m_\diamond})}{\alpha_{m_\diamond}} \le C_2 \vap_0 \frac{S(\alpha_{m_\diamond})}{\alpha_{m_\diamond}},
		$$
		where $C_2$ is a constant depending only on $C_{\rm b}, C_{\rm v}, \varphi$ and $S$. The forms of $C_{\rm b}, C_{\rm v}$ in \cref{l:bvd} further imply that $C_1$ depends only on $\rho, \varphi,S$ and $C_q'$.
		
		Note that 
		$$
		\vap_{m_{**}} \le \risk_{m_{**}} \le C_1 \left(C_{\rm b} \varphi(\alpha_{m_\diamond})^2 + C_{\rm v} \sigma^2 \frac{S(\alpha_{m_\diamond})}{\alpha_{m_\diamond}}\right) \le C_1 C_2 \vap_0 \frac{S(\alpha_{m_\diamond})}{\alpha_{m_\diamond}},
		$$
		and, due to \eref{e:gsp:b}, that $\vap_{m_{**}} \ge \vap_0\theta_1^{m_{**}}$. Thus, 
		$$
		m_{*} \le m_{**} \le \log_{\theta_1}\frac{C_1C_2S(\alpha_{m_\diamond})}{\alpha_{m_\diamond}},
		$$
		which is the first part of the assertion. 
	\end{proof}
	
	\begin{remark}\label{r:vop}
    {\rev In usual cases, the upper bound of $\risk_{m_\diamond}$ in the form given in \cref{l:bvd} tends to $0$ as  $\sigma \to 0$. In particular, we have $\sigma^2S(\alpha_{m_\diamond}) /\alpha_{m_\diamond} \to 0$, and thus} $C_1C_2S(\alpha_{m_\diamond}) /\alpha_{m_\diamond} \le 1/\sigma^2$ for sufficiently small $\sigma$. This together with \cref{t:orc} implies that $m_* \le - \log_{\theta_1} (\sigma^2)$. Moreover, note that  $$
		\vop_m := \sigma^2\max_{k \in \N} \lambda_k \flt_{\alpha_m}(\lambda_k)^2 \le \sigma^2\max_{k \in \N } \flt_{\alpha_m}(\lambda_k) \le \frac{C_q'\sigma^2}{\alpha_m},
		$$
		where the inequalities are based on the filter property in \cref{d:of}(i). 
  Thus, the (possible) price of adaptation for SOLIT in \cref{th:sw19} is  {\rev an additional term of order $\sigma^2(-\log \sigma^2)/\alpha_{m_*}$.}
	\end{remark}
	
	\section{Adaptive minimax rates via SOLIT}\label{s:rates}
	
	Combining \cref{th:sw19,t:orc}, we can derive the minimax adaptation rates in various cases for the SOLIT rule $\hat \alpha$ in \cref{d:solit}. 

	\subsection{The mildly ill-posed case}

We further require a lower bound on the variance part that arises in the bias--variance decomposition (cf.\ \cref{l:bvd}). This is formulated as follows. 

	\begin{assumption}\label{a:lbv1}
		There exists a similar lower bound on $\vap_m$ as the upper bound in \cref{l:bvd}, namely, for some constant $c_{\rm v}$,
		$$
		\vap_m \;\ge \; c_{\rm v}\sigma^2\frac{S(\alpha_m)}{\alpha_m},\qquad\mbox{for}\quad m \in \N_0.
		$$
	\end{assumption}

Note that $\vap_m / \sigma^2 = V_{T,\flt} = \sum_{k = 1}^{\infty}\lambda_k\bigl( \flt_{\alpha}(\lambda_k)\bigr)^2$, and $\lambda_k \asymp k^{-a}$ by \eref{e:eigop}. One can show that \cref{a:lbv1} holds for common ordered filters, including spectral cut-off, (iterated) Tikhonov, Landweber and Showalter, with $S(x)\asymp x^{-1/a}$. For instance, in case of spectral cut-off, i.e., $\flt_\alpha(\lambda) = \lambda^{-1}\mathbf{1}\{\lambda \ge \alpha\}$, it holds that 
$$
\frac{\vap_m}{\sigma^2} \;= \;\sum_{k = 1}^{\infty}\frac{\mathbf{1}\{\lambda_k \ge \alpha\}}{\lambda_k}\; \gtrsim\; \int_{0}^{\infty} x^{a}\mathbf{1}\{x^{-a} \gtrsim \alpha\} \diff x \;\asymp\; \alpha^{-\frac{a + 1}{a}},
$$
which is $S(\alpha)/\alpha$ up to a multiplying constant. 

\begin{theorem}[Mildly ill-posed problems]\label{t:mip}
		Assume the model in~\eref{e:model}, and suppose that the eigenvalues of the forward operator decay at a polynomial rate as in \eref{e:eigop} with $a >1$. Let Assumptions~\ref{a:qlf} and \ref{a:lbv1} hold, where the index function $\varphi\equiv \varphi_{\nu,\tau}$ is given in \eref{e:indfun} with the parameters $\nu > 0$ and $\tau \in \R$, or $\nu = 0$ and $\tau > 0$. Let $\hat\sig_\alpha = \flt_\alpha(\fop^*\fop) \fop^* \data$ with an ordered filter $\flt_\alpha$, and consider SOLIT $\alpha_{\hat m} \in \mathcal{A}_{\theta_1, \theta_2}\equiv \{\alpha_0, \ldots, \alpha_{m_{\max}}\}$ in \emph{\eref{e:gsp:a}}--\emph{\eref{e:gsp:c}} with $\hat m \equiv \hat m(\beta, \gamma)$ as in  \cref{d:solit} as the parameter choice rule. Then, for some constant $C > 0$,
		$$
		\sup_{\sigdag \in  \mathcal{W}_{\varphi} (\rho)} \E{\norm{\hat \sig_{\alpha_{\hat m}} - \sigdag}^2} \le C (\sigma^2)^{\frac{2\nu}{2\nu+1/a+1}}(-\log \sigma^2)^{-\frac{2\tau(1+1/a)}{2\nu + 1+1/a}}.
		$$
\end{theorem}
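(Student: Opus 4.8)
The plan is to combine the two structural results already in hand --- the oracle inequality \cref{th:sw19} and the oracle comparison \cref{t:orc} --- with an explicit evaluation of the bias--variance trade-off for $\varphi=\varphi_{\nu,\tau}$. A preliminary step is to verify the hypotheses of those results that are not explicitly assumed in \cref{t:mip}: \cref{a:scc} holds because $\varphi_{\nu,\tau}$ is an index function for which $\phi$ is convex (as already noted after \cref{a:scc}), and \cref{a:ss} holds with the smooth surrogate $S(\alpha)\asymp\alpha^{-1/a}$ --- indeed $\Sigma(\alpha)\asymp\alpha^{-1/a}$, $\alpha S(\alpha)\asymp\alpha^{1-1/a}\to0$ precisely because $a>1$, and $\tfrac1\alpha\int_0^\alpha\Sigma(t)\diff t\asymp\alpha^{-1/a}\asymp S(\alpha)$. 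With this, \cref{th:sw19} yields $\E{\norm{\hat\sig_{\alpha_{\hat m}}-\sigdag}^2}\le(C_{1,m_*}+2)\risk_{m_*}+2C_{2,m_*}^2$, where $C_{1,m_*}\le 2\sqrt3/(\theta_1^\gamma-1)$ is an absolute constant and $C_{2,m_*}^2\le 2\beta^2\vap_{m_*}+4\vop_{m_*}\bigl(2(1+\gamma)\log(\vap_{m_*}/\vap_0)+\log(1+m_{\max})\bigr)$. By \cref{t:orc}, $\risk_{m_*}\le C_1(2\beta^2+4)\min_{\alpha\in\mathcal A_{\theta_1,\theta_2}}\bigl(C_{\rm b}\varphi(\alpha)^2+C_{\rm v}\sigma^2 S(\alpha)/\alpha\bigr)$, and since $\vap_{m_*}\le\risk_{m_*}$, everything reduces to (a) computing the order of this minimum, and (b) showing the $\vop_{m_*}$-correction in $C_{2,m_*}^2$ is of no larger order.

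For (a) I would minimise $g(\alpha):=C_{\rm b}\,\alpha^{2\nu}(-\log\alpha)^{-2\tau}+C_{\rm v}\,\sigma^2\alpha^{-(1+1/a)}$. Balancing the two summands (legitimate also when $\nu=0$, since then the bias term $(-\log\alpha)^{-2\tau}$ with $\tau>0$ is still increasing in $\alpha$) gives a continuous minimiser $\bar\alpha\asymp\sigma^{2/(2\nu+1+1/a)}(-\log\sigma^2)^{2\tau/(2\nu+1+1/a)}$, which for $\sigma$ small lies strictly between $\alpha_{m_{\max}}\asymp\sigma^{2a/(a+1)}$ and the fixed $\alpha_0$. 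Since consecutive candidates satisfy $\vap_m/\vap_{m-1}\in[\theta_1,\theta_2]$ and $\vap_m\asymp\sigma^2 S(\alpha_m)/\alpha_m$ (by \cref{l:bvd} and \cref{a:lbv1}), the ratios $\alpha_{m-1}/\alpha_m$, hence $g(\alpha_{m-1})/g(\alpha_m)$, are bounded, so $\alpha_{m_\diamond}\asymp\bar\alpha$ and the discrete minimum is within a bounded factor of $g(\bar\alpha)$; substituting,
\[
\min_{\alpha\in\mathcal A_{\theta_1,\theta_2}}g(\alpha)\;\asymp\;\sigma^2\alpha_{m_\diamond}^{-(1+1/a)}\;\asymp\;(\sigma^2)^{\frac{2\nu}{2\nu+1/a+1}}(-\log\sigma^2)^{-\frac{2\tau(1+1/a)}{2\nu+1+1/a}},
\]
which is exactly the claimed rate; the logarithmic case $\nu=0$, $\tau>0$ is included and reduces to $(-\log\sigma^2)^{-2\tau}$.

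For (b), \cref{r:vop} (via \cref{d:of}(i)) gives $\vop_{m_*}\le C_q'\sigma^2/\alpha_{m_*}$, and the key point is $\alpha_{m_*}\gtrsim\alpha_{m_\diamond}$. This follows by tracing through the proof of \cref{t:orc}: one has $\vap_{m_*}\le\vap_{m_{**}}\le C_1C_2\vap_0 S(\alpha_{m_\diamond})/\alpha_{m_\diamond}$, while \cref{a:lbv1} gives $c_{\rm v}\sigma^2 S(\alpha_{m_*})/\alpha_{m_*}\le\vap_{m_*}$; since $\vap_0\asymp\sigma^2$ and $\alpha\mapsto S(\alpha)/\alpha\asymp\alpha^{-(1+1/a)}$ is decreasing, this forces $\alpha_{m_*}\gtrsim\alpha_{m_\diamond}$, hence $\vop_{m_*}\lesssim\sigma^2/\alpha_{m_\diamond}$. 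The logarithmic factors entering $C_{2,m_*}$ are $\lesssim\log(1/\sigma^2)$ because $\vap_{m_*}/\vap_0\le\vap_{m_{\max}}/\vap_0\asymp1/\sigma^2$ and $\log(1+m_{\max})\asymp\log\log(1/\sigma^2)$, so the whole correction is of order at most $\sigma^2\alpha_{m_\diamond}^{-1}\log(1/\sigma^2)$; dividing by the rate $\asymp\sigma^2\alpha_{m_\diamond}^{-(1+1/a)}$ from (a) leaves $\asymp\alpha_{m_\diamond}^{1/a}\log(1/\sigma^2)\to0$, since $\alpha_{m_\diamond}$ decays polynomially in $\sigma$. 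Hence $C_{2,m_*}^2\lesssim\risk_{m_*}+(\text{correction})\lesssim$ rate, and together with the bound on $\risk_{m_*}$ this gives the theorem. The main obstacle is precisely this last estimate: one must show that the extra variance term $\vop_{m_*}$ enters strictly below the oracle rate --- not merely at the rate times a logarithm --- because that is exactly what preserves the sharp logarithmic exponent $-2\tau(1+1/a)/(2\nu+1+1/a)$, rather than a larger one, in the conclusion; this is where the $(m_1,m_2)$-dependent threshold $\thd_{m_1,m_2}$ of SOLIT and the variance-adapted candidate spacing of \cref{ass:alpha} are essential.
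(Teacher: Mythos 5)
Your proposal is correct and follows essentially the same route as the paper's proof: apply \cref{th:sw19} and \cref{t:orc}, compute $\alpha_{m_\diamond}$ and the resulting rate via \cref{l:bvd} with $S(x)\asymp x^{-1/a}$, deduce $\alpha_{m_\diamond}\lesssim\alpha_{m_*}$ by combining $\risk_{m_*}\lesssim\risk_{m_\diamond}$ with \cref{a:lbv1}, and then show the correction term $C_{2,m_*}^2\lesssim\sigma^2\log(1/\sigma^2)/\alpha_{m_\diamond}$ is of strictly lower order than the rate. You are slightly more explicit than the paper in verifying \cref{a:scc} and \cref{a:ss}, in justifying that the discrete minimiser $\alpha_{m_\diamond}$ tracks the continuous balancing value $\bar\alpha$ (via boundedness of consecutive ratios $\alpha_{m-1}/\alpha_m$), and in spelling out the final comparison $\alpha_{m_\diamond}^{1/a}\log(1/\sigma^2)\to 0$, but these are just amplifications of the same argument.
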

	
	\begin{proof}
		It follows from the polynomial decay of $\{\lambda_k\}_{k\in\N}$ that there is a smooth surrogate function $S$ satisfying \cref{a:ss}, which takes the form of $S(x) \asymp x^{-1/a}$. Recall that the index function $\varphi$ in \eref{e:indfun} meets the requirement of \cref{a:scc}. Thus, applying \cref{l:bvd}, we obtain, with $m_\diamond$ in \eref{e:orcm},
		$$
		\alpha_{m_\diamond} \asymp (\sigma^2)^{\frac{1}{2\nu+1+1/a}}(-\log \sigma^2)^{\frac{2\tau}{2\nu + 1+ 1/a}},
		$$
		and
		$$
		\sup_{\sigdag \in \mathcal W_{\varphi}(\rho)} \E{\norm{\hat \sig_{\alpha_{m_{\diamond}}} - \sigdag}^2} \,\lesssim\, {\rev\sigma^2\frac{S(\alpha_{m_\diamond})}{\alpha_{m_\diamond}}\,\asymp\, }(\sigma^2)^{\frac{2\nu}{2\nu+1/a+1}}(-\log \sigma^2)^{-\frac{2\tau(1+1/a)}{2\nu + 1+1/a}}.
		$$
		Further, by \cref{t:orc} and \cref{a:lbv1}, it holds that 
		$$
		\sigma^2\frac{S(\alpha_{m_*})}{\alpha_{m_*}}\lesssim \vap_{m_*} \le \risk_{m_*} \lesssim \varphi(\alpha_{m_\diamond})^2 + \sigma^2\frac{S(\alpha_{m_\diamond})}{\alpha_{m_\diamond}} \asymp \sigma^2\frac{S(\alpha_{m_\diamond})}{\alpha_{m_\diamond}},
		$$
		which yields $\alpha_{m_\diamond} \lesssim \alpha_{m_*}$. Hence, applying again \cref{t:orc} (see also \cref{r:vop}), we have  $m_* \lesssim \log(1/\sigma^2)$ and $\vop_{m_*} \lesssim \sigma^2/\alpha_{m_*} \lesssim \sigma^2/\alpha_{m_\diamond}$. {\rev Note that $m_{\max} \asymp \log(1/\sigma^2)$ by \cref{a:lbv1} and the form of $S(\cdot)$, see also \cref{r:c2ms}. It follows that  $C_{2,m_*}$ in \cref{th:sw19} satisfies $C_{2,m_*}^2 \lesssim \sigma^2\log(1/\sigma^2)/\alpha_{m_\diamond} \ll \sigma^2{S(\alpha_{m_\diamond})}/{\alpha_{m_\diamond}}$.} Thus, the assertion of theorem follows now from \cref{th:sw19,t:orc}.
	\end{proof}
	
	\begin{remark}
		The convergence rate in \cref{t:mip} is minimax optimal for the function class $\mathcal{W}_{\varphi} (\rho)$, see {\rev e.g.\ Theorem~1 in} \cite{Pin80}. Note that the SOLIT rule does not require the exact smoothness of the truth, namely, the parameters $\nu$ and $\tau$ of the index function $\varphi$. Thus, the ordered filters with the SOLIT rule automatically adapt to the smoothness of the true. In particular, when $\nu > 0$ and $\tau \in \R$, it corresponds to the polynomial smoothness, and when $\nu = 0$ and $\tau > 0$, it corresponds to the logarithmic smoothness. In the latter case, the convergence rate in \cref{t:mip} is of order $(-\log \sigma^2)^{-2\tau}$, because of $\nu = 0$.
	\end{remark}
	
	\subsection{Severely ill-posed problems}\label{s:sip}
	
	In this section, we consider severely ill-posed problems, namely, that the eigenvalues $\lambda_k$ of $\fop^*\fop$ in \eref{e:eigdc} decay exponentially as in \eref{e:eigopb} for some $a \in \R$ and $b, \vartheta > 0$.
	
	\subsubsection{Technical challenges}\label{ss:tc}
	We start with the additional technical issues arising from severe ill-posedness. 
	
	The super fast decay of eigenvalues poses difficulty in the selection of candidate parameters (cf.~\cref{ss:alpha}): Finding parameters with geometrically changing variance may be impossible. For instance, in case of spectral cut-off, we have $\mathcal A = \left\{\lambda_1, \lambda_2, ...\right\}$ and consequently, $V_{T,\flt}$ can be seen as a function of $n \in \mathbb N$ such that
	\[
	V_{T,\flt} \left(n\right) = \sum_{k=1}^n \frac{1}{\lambda_k}.
	\]
	It can be readily seen that \cref{ass:alpha} can be fulfilled only if the eigenvalues $\lambda_k$ decay not faster than $k^{a}\exp\left(-b k^\vartheta\right)$ with parameters $b > 0$, $a < 0$ and $\vartheta \in \left[0,1\right]$. As soon as $\vartheta > 1$, we already have
	\[
	\lim_{n \to \infty} \frac{V_{T,\flt} \left(n\right)}{V_{T,\flt} \left(n-1\right)} = \infty,
	\]
	so that \eref{eq:V} cannot be valid for any sequence of regularization parameters in $\mathcal A$.

	Another difficulty is that the analysis technique of replacing summation over eigenvalues by integral with respect to surrogate function (cf.~\cref{a:ss}; originally developed in \cite{BHMR07}) are no longer sharp in rates, but the gap is only in log factors. More precisely, provided that $\lambda_k \asymp k^a \exp(-bk^\vartheta)$ with $b,\vartheta > 0$, the variance of spectral cut-off is of order 
	$$
	\sigma^2 \alpha (-\log \alpha)^{1/\vartheta-1}\qquad \mbox{for}\quad \alpha \in \mathcal{A} \equiv \{\lambda_1, \lambda_2, \ldots\},
	$$
	while the bound in \cref{l:bvd} gives that the variance part is of order $ \sigma^2 \alpha (-\log \alpha)^{1/\vartheta}.$ 
	In the severely ill-posed situation, a sharp bound on the variance part remains open for general (ordered) filters, and is an interesting topic for future research. 
	
	\subsubsection{Performance of SOLIT}
	As mentioned above, the upper bound on the variance part in \cref{l:bvd} is sub-optimal in case of serious ill-posedness. Thus, we need a weaker assumption than \cref{a:lbv1} as follows.
	\begin{assumption}\label{a:lbv2}
		We assume a lower bound on $\vap_m$ slightly smaller than the upper bound in \cref{l:bvd}, more precisely, for some constant $c_{\rm v}$,
		$$
		\vap_m \;\ge c_{\rm v} \; \sigma^2\frac{S(\alpha_m)}{(-\log \alpha_m)\alpha_m},\qquad \mbox{for}\quad m \in \N_0.
		$$
	\end{assumption}

Similar as \cref{a:lbv1} for mildly ill-posed problems, one can show that \cref{a:lbv2} is satisfied for common ordered filters, e.g., spectral cut-off, (iterated) Tikhonov, Landweber and Showalter, for severely ill-posed problems. As a showcase, we consider the spectral cut-off $\flt_\alpha(\lambda) = \lambda^{-1}\mathbf{1}\{\lambda \ge \alpha\}$. Then, by \eref{e:eigopb}, we obtain
$$
\frac{\vap_m}{\sigma^2} \,= \,\sum_{k = 1}^{\infty}\frac{\mathbf{1}\{\lambda_k \ge \alpha\}}{\lambda_k}\, \gtrsim\, \int_{0}^{\infty} x^{a} \exp(b x^{\vartheta})\mathbf{1}\{x^{-a} \exp(-b x^{\vartheta}) \gtrsim \alpha\} \diff x \,\asymp\, \frac{(-\log \alpha )^{\frac{1 - \vartheta}{\vartheta}}}{\alpha}. 
$$
Note that the exponential decay of eigenvalues in \eref{e:eigopb} implies that $S(x) \asymp (-\log x)^{1/\vartheta}$. Thus, \cref{a:lbv2} holds for the spectral cut-off. 
	
	\begin{theorem}[Severely ill-posed problems]\label{t:sip}
		Assume the model in~\eref{e:model}, and suppose that the eigenvalues of the forward operator decay at an exponential rate as in \eref{e:eigopb} with $a\in \R$, $b > 0$ and $0 < \vartheta \le 1/2$. Let Assumptions~\ref{a:qlf}, \ref{a:ss} and \ref{a:lbv2} hold, where the index function $\varphi\equiv \varphi_{\nu,\tau}$ is given in \eref{e:indfun} with the parameters $\nu > 0$ and $\tau \in \R$, or $\nu = 0$ and $\tau > 0$. Let $\hat\sig_\alpha = \flt_\alpha(\fop^*\fop) \fop^* \data$ with an ordered filter $\flt_\alpha$, and consider SOLIT $\alpha_{\hat m} \in \mathcal{A}_{\theta_1, \theta_2}\equiv \{\alpha_0, \ldots, \alpha_{m_{\max}}\}$ as in \emph{\eref{e:gsp:a}}--\emph{\eref{e:gsp:c}} with $\hat m \equiv \hat m(\beta, \gamma)$ as in \cref{d:solit} as the parameter choice rule. Then, for some constant $C > 0$,
		$$
		\sup_{\sigdag \in  \mathcal{W}_{\varphi_{\nu,\tau}} (\rho)} \E{\norm{\hat \sig_{\alpha_{\hat m}} - \sigdag}^2} \le C (\sigma^2)^{\frac{2\nu}{2\nu+1}}(-\log \sigma^2)^{\frac{2\nu/\vartheta-2\tau}{2\nu+1}}.
		$$
	\end{theorem}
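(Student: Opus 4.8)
The plan is to follow the scheme of the proof of \cref{t:mip}, but with the smooth surrogate adapted to exponential eigenvalue decay and with a careful accounting of logarithmic factors, since the restriction $\vartheta \le 1/2$ will be used precisely once to absorb them. First I would fix the surrogate: from \eref{e:eigopb} one gets $\Sigma(x) \asymp (-\log x)^{1/\vartheta}$, and $S(x) \asymp (-\log x)^{1/\vartheta}$ is readily checked to satisfy \cref{a:ss}. Since $\varphi_{\nu,\tau}$ satisfies \cref{a:scc}, \cref{l:bvd} applies; balancing the bias term $C_{\rm b}\varphi(\alpha)^2 = C_{\rm b}\alpha^{2\nu}(-\log\alpha)^{-2\tau}$ against the variance bound $C_{\rm v}\sigma^2 S(\alpha)/\alpha \asymp \sigma^2(-\log\alpha)^{1/\vartheta}/\alpha$ — and using $-\log\alpha_{m_\diamond} \asymp -\log\sigma^2$ — gives, for the oracle $m_\diamond$ of \eref{e:orcm},
\[
\alpha_{m_\diamond} \;\asymp\; (\sigma^2)^{\frac{1}{2\nu+1}}(-\log\sigma^2)^{\frac{1/\vartheta + 2\tau}{2\nu+1}}, \qquad \risk_{m_\diamond} \;\lesssim\; \sigma^2\frac{S(\alpha_{m_\diamond})}{\alpha_{m_\diamond}} \;\asymp\; (\sigma^2)^{\frac{2\nu}{2\nu+1}}(-\log\sigma^2)^{\frac{2\nu/\vartheta - 2\tau}{2\nu+1}},
\]
the right-hand side being exactly the asserted rate (the degenerate case $\nu=0$ is handled identically and yields $\risk_{m_\diamond} \lesssim (-\log\sigma^2)^{-2\tau}$).

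Next I would locate the SOLIT oracle $m_*$. By \cref{t:orc}, $\risk_{m_*} \lesssim \risk_{m_\diamond}$ and, as in \cref{r:vop}, $m_* \lesssim -\log\sigma^2$; moreover $\vap_{m_{\max}} \approx 1$ together with $\vap_0 \asymp \sigma^2$ forces $m_{\max} \lesssim -\log\sigma^2$, hence $\log(1+m_{\max}) \lesssim \log(-\log\sigma^2)$. Combining $\vap_{m_*} \le \risk_{m_*} \lesssim \risk_{m_\diamond} \lesssim \sigma^2 S(\alpha_{m_\diamond})/\alpha_{m_\diamond}$ with the lower bound of \cref{a:lbv2}, namely $\vap_{m_*} \ge c_{\rm v}\sigma^2 S(\alpha_{m_*})/\bigl((-\log\alpha_{m_*})\alpha_{m_*}\bigr)$, and inserting $S(x) \asymp (-\log x)^{1/\vartheta}$, I would deduce $\alpha_{m_*} \gtrsim \alpha_{m_\diamond}/(-\log\sigma^2)$ — here the extra factor $(-\log\alpha)^{-1}$ present in \cref{a:lbv2} but not in \cref{l:bvd} costs exactly one logarithmic factor. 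Consequently, by \cref{r:vop}, $\vop_{m_*} \le C_q'\sigma^2/\alpha_{m_*} \lesssim \sigma^2(-\log\sigma^2)/\alpha_{m_\diamond}$.

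The final step feeds these bounds into the oracle inequality of \cref{th:sw19}. There $C_{1,m_*} \le 2\sqrt{3}/(\theta_1^\gamma - 1)$ is an absolute constant, so the first summand is $\lesssim \risk_{m_*} \lesssim \risk_{m_\diamond}$. In $C_{2,m_*}$, the term $\beta\sqrt{\vap_{m_*}} \le \beta\sqrt{\risk_{m_*}} \lesssim \sqrt{\risk_{m_\diamond}}$; for the remaining piece, the bracket $2(1+\gamma)\log(\vap_{m_*}/\vap_0) + \log(1+m_{\max})$ is $\lesssim m_* \lesssim -\log\sigma^2$, so that
\[
\vop_{m_*}\Bigl(2(1+\gamma)\log\tfrac{\vap_{m_*}}{\vap_0} + \log(1+m_{\max})\Bigr) \;\lesssim\; \frac{\sigma^2(-\log\sigma^2)^2}{\alpha_{m_\diamond}},
\]
whereas the asserted rate is $\asymp \sigma^2 S(\alpha_{m_\diamond})/\alpha_{m_\diamond} \asymp \sigma^2(-\log\sigma^2)^{1/\vartheta}/\alpha_{m_\diamond}$. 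Since $\vartheta \le 1/2$, i.e.\ $1/\vartheta \ge 2$, we have $(-\log\sigma^2)^2 \lesssim (-\log\sigma^2)^{1/\vartheta}$, hence $C_{2,m_*}^2 \lesssim \sigma^2 S(\alpha_{m_\diamond})/\alpha_{m_\diamond}$. Plugging into \cref{th:sw19} then gives $\sup_{\sigdag \in \mathcal W_{\varphi_{\nu,\tau}}(\rho)} \E{\norm{\hat\sig_{\alpha_{\hat m}} - \sigdag}^2} \lesssim \sigma^2 S(\alpha_{m_\diamond})/\alpha_{m_\diamond}$, which by Step 1 is the claimed rate.

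I expect the control of the adaptation price $C_{2,m_*}$ to be the real obstacle. Unlike the mildly ill-posed case, the price here — essentially the operator-norm variance $\vop_{m_*}$ multiplied by the logarithmic factors coming from $m_*$ and $m_{\max}$ — is genuinely of the same polynomial-in-$\log$ order as the target rate, so one cannot be cavalier: the sharp lower bound $\alpha_{m_*} \gtrsim \alpha_{m_\diamond}/(-\log\sigma^2)$ extracted from \cref{a:lbv2} and the hypothesis $\vartheta \le 1/2$ must be combined to make it vanish. A related subtlety, already flagged in \cref{ss:tc}, is that the surrogate-integral technique behind \cref{l:bvd} over-estimates the variance part by one logarithmic factor in the severely ill-posed regime; this is exactly why the correct lower-bound hypothesis is \cref{a:lbv2} rather than \cref{a:lbv1}, and the argument must be arranged so that this single-log slack never propagates to the final rate.
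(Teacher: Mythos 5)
Your proposal is correct and follows essentially the same route as the paper's proof: the same surrogate $S(x) \asymp (-\log x)^{1/\vartheta}$, the same computation of $\alpha_{m_\diamond}$ and $\risk_{m_\diamond}$ via \cref{l:bvd}, the same lower bound from \cref{a:lbv2} to control $\alpha_{m_*}$, and the same use of $\vartheta \le 1/2$ to absorb the factor $(-\log\sigma^2)^{2}$ into $(-\log\sigma^2)^{1/\vartheta}$ when bounding $C_{2,m_*}^2$. The one place you are slightly more explicit than the paper is the intermediate deduction $\alpha_{m_*} \gtrsim \alpha_{m_\diamond}/(-\log\sigma^2)$; to make it airtight one should note the (easy) dichotomy: if $\alpha_{m_*} > \alpha_{m_\diamond}$ the claim is trivial, and if $\alpha_{m_*} \le \alpha_{m_\diamond}$ then $-\log\alpha_{m_*} \gtrsim -\log\sigma^2$, so the factor $(-\log\alpha_{m_*})^{1/\vartheta-1}$ can be bounded from below by $(-\log\sigma^2)^{1/\vartheta-1}$ as required. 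This is a minor clarification and does not change the substance; the paper compresses this into a single chain of inequalities but relies on the same observation.
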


	\begin{proof}
		It can be proven in exactly the same way as \cref{t:mip}, so, to avoid repetitions, we list only the differences in detailed computation: 
		\begin{itemize}
			\item The smooth surrogate function takes the form of  
			$$
			S(x) \;\asymp\; \left(-\log x + \frac{a}{\vartheta}\log (- \log x)\right)^{1/\vartheta}  \;\asymp \;(-\log x )^{1/\vartheta}.
			$$
			\item 
			The convergence rate corresponding to $m_\diamond$ in \eref{e:orcm} is
			$$
			\sup_{\sigdag \in \mathcal W_{\varphi}(\rho)} \E{\norm{\hat \sig_{\alpha_{m_{\diamond}}} - \sigdag}^2} \lesssim (\sigma^2)^{\frac{2\nu}{2\nu+1}}(-\log \sigma^2)^{\frac{2\nu/\vartheta-2\tau}{2\nu+1}},
			$$
			with $\alpha_{m_\diamond} \asymp (\sigma^2)^{\frac{1}{2\nu+1}}(-\log \sigma^2)^{\frac{2\tau+1/\vartheta}{2\nu+1}}$.
			\item {\rev By \cref{a:lbv2} and \cref{t:orc} we have
   $$
   \sigma^2\frac{S(\alpha_{m_*})}{(-\log\alpha_{m_*})\alpha_{m_*}}\lesssim \vap_{m_*} \le \risk_{m_*} \lesssim (\sigma^2)^{\frac{2\nu}{2\nu+1}}(-\log \sigma^2)^{\frac{2\nu/\vartheta-2\tau}{2\nu+1}},
   $$
   which together with the form of $S(\cdot)$ implies that (cf.\ \cref{r:vop})
   $$
   C_{2,m_*}^2 \lesssim m_* \vop_{m_*} \lesssim \frac{\sigma^2(-\log \sigma^2)}{\alpha_{m_*}} \lesssim (\sigma^2)^{\frac{2\nu}{2\nu+1}} (- \log\sigma^2)^{\frac{2\nu/\vartheta-2\tau}{2\nu+1} + 2- \frac{1}{\vartheta}}. 
   $$}
\end{itemize}
{\rev Thus, if $0 < \vartheta \le 1/2$, we have $\mathbb{E}[{\|{\hat \sig_{\alpha_{\hat m}} - \sigdag}\|^2}] \lesssim (\sigma^2)^{\frac{2\nu}{2\nu+1}}(-\log \sigma^2)^{\frac{2\nu/\vartheta-2\tau}{2\nu+1}}$ by \cref{th:sw19,t:orc}.}
	\end{proof}
	
	\begin{remark}
		We consider the logarithmic and the polynomial smoothness separately. 
		
		First, in case of logarithmic smoothness (i.e., index function $\varphi$ with $\nu = 0$ and $\tau > 0$), the convergence rate in \cref{t:sip} is in fact of order $(-\log \sigma^2)^{-2\tau}$, which is minimax optimal (cf.\ \cite{Pin80}). As in the mildly ill-posed problems, the ordered filters with the SOLIT rule perform as well as if the exact smoothness specified by $\tau$ were known.
		
		Second, in case of polynomial smoothness (i.e., index function $\varphi$ with $\nu > 0$ and $\tau \in \R$), the minimax optimal rate on $\mathcal{W}_{\varphi} (\rho)$ is of order 
		\begin{equation*}
			(\sigma^2)^{\frac{2\nu}{2\nu + 1}}(-\log\sigma^2)^{\frac{2\nu/\vartheta - 2\tau}{2\nu + 1}-\frac{2\nu}{2\nu+1}}.
		\end{equation*}
		The convergence rate in \cref{t:sip} is slower by a factor of $(-\log\sigma^2)^{{2\nu}/{(2\nu+1)}}$, and thus not minimax optimal. However, it is unclear whether our slower rate is improvable or not, as it is required to hold simultaneously over more than one smoothness class. In \cite{Tsy00}, it is shown that the optimal adaption rate (cf.~\cref{s:moa}) is of order
		$$
		(\sigma^2)^{\frac{2\nu}{2\nu + 1}}(-\log\sigma^2)^{\frac{2\nu/\vartheta - 2\tau}{2\nu + 1}},
		$$
		in the special case of $\vartheta = 1$. This is exactly the same as in \cref{t:sip}, which requires though $\vartheta \in (0,1/2]$. We thus conjecture that the convergence rate of the SOLIT rule would be adaptively minimax optimal over the whole range of smoothness classes specified by the index function $\varphi$ with $\nu > 0$ and $\tau\in \R$.
	\end{remark}
	
	\begin{remark}[Highly serious ill-posedness, $\vartheta > 1/2$]
		There is a restriction on the decaying speed of eigenvalues in \cref{t:sip}, more precisely, $\vartheta \le 1/2$. The reason behind is the sub-optimality of the upper bound on the variance part in \cref{l:bvd}. If a sharp bound were available, the result of \cref{t:sip} would hold for $\vartheta \le 1$. 
		
		Moreover, in the super fast decaying regime of $\vartheta > 1$, it turns out that the discretization of regularization parameter $\alpha$ uniform in geometric scaling, detailed in \eref{e:gsp:a}--\eref{e:gsp:c}, is no longer appropriate, as mentioned above. In fact, one should consider instead a smaller set $\mathcal{A}\equiv\mathcal{A}_{\theta_1,\theta_2}  = \{\alpha_m ;m\in \N_0\}$ such that 
		\begin{equation*}
			\infty > \theta_2 \ge \frac{\exp\bigl( (\log \vap_m)^{1/\vartheta}\bigr)}{\exp\bigl((\log \vap_{m-1})^{1/\vartheta}\bigr)}\ge \theta_1 > 1\qquad \mbox{for all}\quad m. 
		\end{equation*}
		Under such a discretization, it seems to be possible to show that the convergence rate of the SOLIT rule is adaptively minimax optimal for every $\vartheta > 0$. This would require a generalization of \cref{th:sw19}. Thus, the details are left as part of future research. 
	\end{remark}
	
	\section{Implementation and numerical simulations}\label{sec:implementation}
	
	To investigate the performance of SOLIT in practice, we start with a potential implementation. {\rev Note that for suitable filters $\flt_\alpha(\cdot)$, it is possible to implement the computation of $\hat f_\alpha$ without using the SVD of $T$. As an exemplary case we consider the Tikhonov regularization. Here $\hat f_\alpha$ can be computed as the solution of
	\[
	\left(\alpha \mathrm{Id} + T^*T\right) \hat f_\alpha = T^*Y,
	\]
    where $\mathrm{Id}$ denotes the identity operator on $\mathcal{X}$. 
	Similar computational possibilities exist e.g.\ for Landweber and Showalter regularizations. Given an SVD-free method to compute $\hat f_\alpha$, it is also possible to compute the SOLIT parameter $\hat \alpha$ without making use of the SVD, as we will detail below.}

	\subsection{Choosing the candidate parameters}
	
	To implement SOLIT, we first have to generate a sequence of regularization parameters $\alpha_0 > \alpha_1 > \cdots > \alpha_{m_{\max}} > 0$ obeying \eref{e:gsp:b}. This can be done using a line search for the function $V\left(\alpha\right) := \trace \bigl(\svar{\hat f_{\alpha}}\bigr)$. By recursively splitting an interval into halves, this algorithm provides an approximation $\hat \alpha$ to the solution $\alpha$ of $V  \left(\alpha\right) = \lambda$ with $\left|V\left(\alpha\right)- V\left(\hat \alpha\right)\right| \leq \epsilon$ for a prescribed error level $\epsilon$ in $\mathcal O\bigl(\log (1/\epsilon)\bigr)$ steps. If no such $\hat \alpha$ exists, an alternative value $\hat \alpha$ and the corresponding error $\epsilon$ is returned. 
	
	Now we fix a tuning parameter $\theta > 1$, set $\theta_2 =  \theta + \frac{\theta-1}{2}, \theta_1 =\theta- \frac{\theta-1}{2}$ and choose $\alpha_0$ as the line search approximation of $V\left(\alpha_0\right) = \sigma^2$ with $\epsilon = \frac{\theta-1}{2}$. If no such approximation exists, then we take the smallest possible value $\alpha_0$ such that $V\left(\alpha_0\right) > \sigma^2$ and enlarge $\theta_2$ correspondingly. Given $\alpha_{m-1}$, the relation \eref{e:gsp:b} corresponds to
	\[
	\log\left(\theta_1\right) + \log\left(V\left(\alpha_{m-1}\right)\right) \leq \log\left(V\left(\alpha_{m}\right)\right)  \leq \log\left(\theta_2\right) + \log\left(V\left(\alpha_{m-1}\right)\right),
	\]
	i.e., we can choose $\alpha_m$ as the line search approximation to the equation $\log\left(V\left(\alpha_m\right)\right) =\log\left(\theta\right) + \log\left(V\left(\alpha_{m-1}\right)\right)$ with $\epsilon = \frac{\theta-1}{2}$. Again, if no such solution exists we take a potentially larger $\alpha_m$ and enlarge $\theta_2$. 
	
	Note that the above procedure is in fact just a discrete approximation of the technique used in the proof of Lemma \ref{lem:alpha}. However, it also works in case of discontinuous filters $\alpha \mapsto \flt_\alpha(\lambda)$ as the sequence needs to be constructed only until a minimal value $\alpha_{m_{\max}} > 0$. From this point of view, the upper bound in \eref{e:gsp:b} is trivial (as only finitely many $\alpha$s are considered), but the above procedure ensures a good (i.e., small) value of $\theta_2$. 
	
	{\rev Since $\svar{\hat f_\alpha} = \sigma^2 T^*T \flt_\alpha(T^*T)^2$, we obtain in case of Tikhonov regularization that $V(\alpha) = \sigma^2\trace \bigl(\left(\alpha \mathrm{Id} + T^*T\right)^{-2} T^*T\bigr)$. To compute this function exactly, one has to evaluate  $T^*T$ on an arbitrary orthonormal system of $\X$, which numerically is as expensive as computing the SVD itself. However, by making use of random trace estimation algorithms \cite{at11}, the function $V$ can be approximated with high accuracy without computing the SVD and with reasonable computational cost.}
	
	\subsection{Computing the critical values $z_{m,m'}$}\label{sec:z}
	
	Given the regularization parameters $\alpha_m$, the implementation of SOLIT only requires thresholds in \eref{e:limit:b}, which are given in terms of the critical values $z_{m_1, m_2}(x)$ in \eref{e:limit:c}. The latter correspond to quantiles of the random variable
	\[
	\nop_{m_1,m_2}^2 = \norm{\err_{\alpha_{m_1}} - \err_{\alpha_{m_2}}}^2 = \sum_{k=1}^\infty \lambda_k \left(q_{\alpha_{m_1}} \left(\lambda_k\right) - q_{\alpha_{m_2}} \left(\lambda_k\right)\right)^2 \varepsilon_k^2
	\]
	with $\varepsilon_k \stackrel{\mathrm{i.i.d.}}{\sim} \mathcal N \left(0,1\right)$, i.e., standard Gaussian random variables.  In our implementation, the operator $\fop$ is approximated by a matrix $A$, and hence the sum is truncated at $k = n$. This implies that $\nop_{m_1,m_2}$ has the same distribution as
	\[
	Z_{m_1,m_2} := \varepsilon^\top A_{m_1,m_2} \varepsilon
	\]
	with the matrix $A_{m_1,m_2} = A^*A \bigl((q_{\alpha_{m_1}}(A^*A) - q_{\alpha_{m_2}}(A^*A)\bigr)^2$ and a random vector $\varepsilon \sim \mathcal N\left(0, I_n\right)$, i.e., $n$-dimensional standard Gaussian random vector. Note that $Z_{m_1,m_2}$ is a generalized $\chi^2$ random variable, the distribution of which can either be simulated by a Monte Carlo method or be approximated by a non-central $\chi^2$ approximation as described in \cite{ltz09}. There it is shown that the distribution of $\chi^2_l\left(\delta\right)$ (where $l$ is the number of degrees of freedom and $\delta$ the non-centrality parameter) is a reasonable approximation of the distribution of $Z_{m_1, m_2}$ in the following sense. With $c_k = \trace\left(A_{m_1,m_2}^k\right)$ for $k=1,2,3,4$ and the values $s_1 = {c_3}/{c_2^{3/2}}$ and $s_2 = {c_4}/{c_2^2}$, which correspond multiples of the skewness and kurtosis of $Z_{m_1,m_2}$, we define
	\begin{description}
		\item[if $s_1^2 > s_2$] $a = \frac{1}{s_1-\sqrt{s_1^2-s_2}}$, $\delta = s_1a^3 - a^2$, $l = a^2-2\delta$, and
		\item[if $s_1^2 \leq s_2$] $\delta = 0$, $l = \frac{c_2^3}{c_3^2}$, $a = \frac{1}{s_1}$.
	\end{description}
	Then it holds
	\[
	\Prob{Z_{m_1,m_2} > t} \approx \Prob{\chi^2_l \left(\delta\right) > \sqrt{2}a\frac{t-c_1}{\sqrt{2c_2}} + l + \delta}.
	\]
	We compared this approximation with the exact distribution of $Z$ (as computed in \cite{dg21}) and Monte Carlo simulations and found that the error in the quantiles is overwhelmingly small, and hence we use this approximation in our implementation.
	{\rev The necessary traces can once more be computed efficiently and without using the SVD of $T$ by means of (random) approximations, see \cite{at11}.} 
	
	\subsection{Numerical simulations}\label{sec:numerics}
	
	In this section we will report on numerical experiments supporting the minimax behaviour of SOLIT. We therefore follow the simulation of convergence rates in \cite{w18} and consider the same synthetic problems and there:
	\begin{description}
		\item[Antiderivative.] A mildly ill-posed problem with Fredholm integral operator $T : \mathbf L^2 \left(\left[0,1\right]\right) \to \mathbf L^2 \left(\left[0,1\right]\right)$ such that $\left(Tf\right)'' = -f$ for all $f \in \mathbf L^2 \left( \left[0,1\right] \right)$, see also \cite{hw14} for details. It is known that the eigenvalues $\lambda_k$ of $T^*T$ have the decay behaviour $\lambda_k \asymp k^{-4}$. 
		\item[2D-Gradiometry.] A severely ill-posed problem, which occurs when one aims to reconstruct the earth's gravitational potential on the surface from satellite measurements of the potentials second derivative in radial direction. In the two-dimensional case, using Fourier-coefficients $\hat f(k)$, the explicit formula is
		\[
		\left(Tf\right)\left(x\right) = \sum\limits_{k = -\infty}^{\infty}\left|k\right| \left(\left|k\right|+1\right) R^{-\left|k\right|-2} \exp\left(\textup{i} kx\right) \hat f\left(k\right)
		\]
		for the corresponding operator $T : \mathbf L^2 \left(S, \mu\right) \to \mathbf L^2 \left(R S, \mu\right)$ with the surface measure $\mu$ on the unit sphere $S = \set{x \in \mathbb R^2}{\norm{x}_{2} =1 }$ and the relative radius $R > 1$ of the satellite is known, see, e.g., \cite{kwh16}. Thus, the eigenvalues $\lambda_k$ of $T^*T$ are given as $\lambda_k = \left|k\right|^2 \left(\left|k\right|+1\right)^2 R^{-2\left|k\right|-4}$. Furthermore it follows that the Fourier coefficient $\hat f (0)$ cannot be determined from $Tf$. 
		\item[Heat.] A severely ill-posed problem where one aims at reconstructing the source $f$ in the periodic heat equation
		\begin{eqnarray*}
			\frac{\partial u}{\partial t} \left(x,t\right) = \frac{\partial^2 u}{\partial t^2} \left(x,t\right) & \mathrm{in}\quad\left(-\pi,\pi\right] \times \left(0,\bar t\right), \\ u\left(x,0\right) = f(x) & \mathrm{on}\quad \left[-\pi,\pi\right], \\ u\left(-\pi,t\right) = u\left(\pi,t\right) & \mathrm{on}\quad t \in \left(0,\bar t\right]
		\end{eqnarray*}
		from measurements of the final heat distribution $g = u\left(\cdot,\bar t\right) $ with $\bar t>0$. By separation of variables, an explicit representation of $T: \mathbf L^2 \left(\left[-\pi,\pi\right]\right) \to \mathbf L^2 \left(\left[-\pi,\pi\right]\right)$ in form of a Fourier series can be computed, i.e.,
		\[
		\left(Tf\right)\left(x\right) = \sum\limits_{k = -\infty}^{\infty} \exp\left(-k^2\bar t\right) \exp\left(\textup{i} kx\right) \tilde f\left(k\right),
		\]
		where $\tilde f\left(k\right)$ is the $k$-th Fourier coefficient of $f$. 
		Consequently, the eigenvalues $\lambda_k$ of $T^*T$ are given by $\lambda_k = \exp\left(-2k^2\bar t\right)$. 
	\end{description}
	We simulate the rate of convergence for specific exact solutions $f^\dagger$ described below when using three different ordered filters, namely,
	\begin{itemize}
		\item \textit{Tikhonov regularization}, where $q_\alpha \left(\lambda\right) = \frac{1}{\lambda+\alpha}$. In this case, all requirements of Definition \ref{d:of} are satisfied with $C_q' = 1$, and the maximal qualification is $\varphi(x) = x$. Moreover, the mapping $\alpha \mapsto q_\alpha(\lambda)$ is obviously continuous and hence Lemma \ref{lem:alpha} is applicable.
		\item \textit{Showalter's method}, where $q_\alpha \left(\lambda\right) = \frac{1-\exp\left(-{\lambda}/{\alpha}\right)}{\lambda}$. Again, all requirements of Definition \ref{d:of} are met with $C_q' = 1$ and an arbitrary index function $\varphi$. Moreover, the mapping $\alpha \mapsto q_\alpha(\lambda)$ is obviously continuous and hence Lemma \ref{lem:alpha} is applicable.
		\item \textit{Spectral cut-off regularization}, where $q_\alpha \left(\lambda\right) = \frac{1}{\lambda} \mathbf 1_{[\alpha, \infty)}(\lambda)$. Again, all requirements of Definition \ref{d:of} are met with $C_q' = 1$ and an arbitrary index function $\varphi$, but in this case \cref{lem:alpha} is not applicable and as discussed in \cref{ss:tc} the choice of suitable candidates $\alpha$'s is questionable.
	\end{itemize}
	Besides the rate of convergence, we also depict the \textit{price of adaptation}
	\begin{equation}\label{eq:poa}
		\left(\sqrt{\mathcal R_{m_*}} + C_{2,m_*}\right)^2
	\end{equation}
	on the right-hand side of the oracle inequality in Theorem \ref{t:orc}.
	
	\subsubsection{The antiderivative problem}\label{sec:antiderivative}
	
	We choose the continuous function
	\begin{equation}\label{eq:exact_sol}
		f^\dagger\left(x\right) = x \mathbf 1_{\left[0,\frac12\right]}(x) + (1-x)\mathbf 1_{\left[\frac12,1\right]}(x)
	\end{equation}
	as exact solution. As argued in \cite{LiWe20}, the optimal rate of convergence in this situation is $\mathcal O \left(\sigma^{3/4-\varepsilon}\right)$ for any $\varepsilon>0$. To avoid an inverse crime, the exact data $g = Tf^\dagger$ is implemented analytically:
	\[
	g\left(x\right) = -\frac{x\left(4x^2-3\right)}{24}\mathbf 1_{\left[0,\frac12\right]}(x)+ \frac{\left(x-1\right)\left(4x^2-8x+1\right)}{24}\mathbf 1_{\left[\frac12,1\right]}(x).
	\]
	
	In Figure \ref{fig:antiderivative} we depict the ideal rate $\mathcal O \left(\sigma^{3/4}\right)$ the convergence rate obtained with SOLIT, the convergence rate obtained by the oracle $\alpha_*$ in \eref{e:orcr}, and the convergence rate obtained for the optimal choice
	\begin{equation}\label{eq:alpha_opt}
		\alpha_{\mathrm{opt}} := \argmin_{\alpha \in \left\{\alpha_0,...,\alpha_{m_{\max}}\right\}} \norm{f_\alpha -f^\dagger}_{\X}.
	\end{equation}
	The results show that SOLIT clearly obtains the minimax rate and yields also a surprisingly small loss compared to the oracle choice. For Tikhonov and Showalter regularization, the oracle choice and the optimal choice cannot even be distinguished, for spectral cut-off regularization (where determining the candidate $\alpha$'s is more subtle) a less regular behavior can be observed. It is also visible that the price of adaptation \eref{eq:poa} is of the same order as the optimal rate, supporting our analysis.
	
	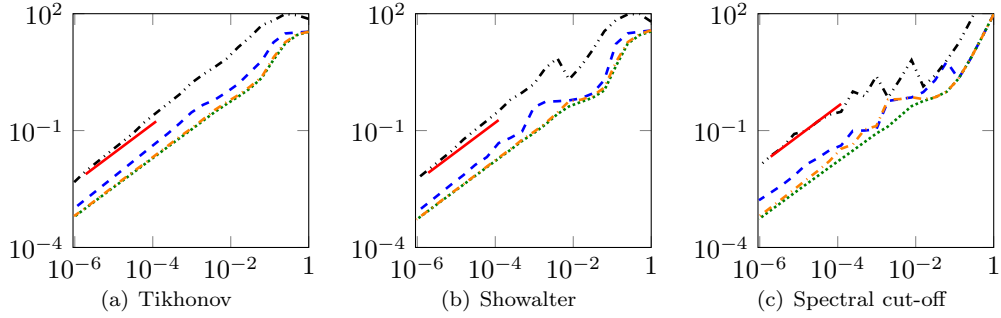
\begin{figure}[!htb]
		\setlength{\fwidth}{3.1cm}
		\setlength{\fheight}{3.1cm}
		\subfigure[Tikhonov]{\begin{tikzpicture}[baseline]

\begin{axis}[%
width=\fwidth,
height=\fheight,
scale only axis,
xmode=log,
xmin=9e-07,
xmax=1,
xminorticks=false,
xtick={0.000001,0.0001,0.01,1},
xticklabels = {$10^{-6}$,$10^{-4}$,$10^{-2}$,$1$},
ymode=log,
ymin=1e-04,
ymax=100,
yminorticks=false
]
\addplot [color=blue,dashed, line width = 0.35mm]
  table[row sep=crcr]{%
1	34.4346052008019\\
0.5	33.00044799331\\
0.25	31.1037119181398\\
0.125	18.4382912235804\\
0.0625	6.1727196294087\\
0.03125	3.13428553576885\\
0.015625	1.58382227787991\\
0.0078125	0.964567291334749\\
0.00390625	0.619485565244678\\
0.001953125	0.458304391925889\\
0.0009765625	0.29123005276499\\
0.00048828125	0.152842089874333\\
0.000244140625	0.0840496867213426\\
0.0001220703125	0.0485263516328282\\
6.103515625e-05	0.0278240726678024\\
3.0517578125e-05	0.0159028115399263\\
1.52587890625e-05	0.00913147537682874\\
7.62939453125e-06	0.00523494612962139\\
3.814697265625e-06	0.00300792605212834\\
1.9073486328125e-06	0.00173056403984703\\
9.5367431640625e-07	0.000996922982086576\\
};

\addplot [color=black!50!green,densely dotted, line width = 0.35mm]
  table[row sep=crcr]{%
1	34.4346052008019\\
0.5	28.2995735437004\\
0.25	17.1035458219707\\
0.125	6.55922357659464\\
0.0625	2.25214910188325\\
0.03125	1.20884460567133\\
0.015625	0.743958293397229\\
0.0078125	0.478883790005038\\
0.00390625	0.300436037664164\\
0.001953125	0.178954106778099\\
0.0009765625	0.105986376884406\\
0.00048828125	0.0639416968962967\\
0.000244140625	0.0381666864240348\\
0.0001220703125	0.0228652197142978\\
6.103515625e-05	0.0136092325486725\\
3.0517578125e-05	0.00808570447069591\\
1.52587890625e-05	0.00482873397015868\\
7.62939453125e-06	0.00288822189482184\\
3.814697265625e-06	0.00171970800452188\\
1.9073486328125e-06	0.00102434082891929\\
9.5367431640625e-07	0.000606676051465466\\
};

\addplot [color=orange,dash dot, line width = 0.35mm]
  table[row sep=crcr]{%
1	34.4346052008019\\
0.5	29.5908294158034\\
0.25	18.566526448147\\
0.125	7.54714152318023\\
0.0625	2.39479144831643\\
0.03125	1.34512732677948\\
0.015625	0.81652422810714\\
0.0078125	0.533999266270759\\
0.00390625	0.339548023374774\\
0.001953125	0.194777764067603\\
0.0009765625	0.113243342970829\\
0.00048828125	0.0695020141280952\\
0.000244140625	0.040933186543236\\
0.0001220703125	0.0241620949470407\\
6.103515625e-05	0.0142270008792885\\
3.0517578125e-05	0.00866086678844794\\
1.52587890625e-05	0.00510828688031534\\
7.62939453125e-06	0.00301888394778842\\
3.814697265625e-06	0.00177971509857883\\
1.9073486328125e-06	0.00105090789346874\\
9.5367431640625e-07	0.000637154656833028\\
};

\addplot [color=black,dash dot dot, line width = 0.35mm]
  table[row sep=crcr]{%
1	73.486163550366\\
0.5	91.5482345800251\\
0.25	98.2821799849874\\
0.125	67.4881798331964\\
0.0625	50.3864187390774\\
0.03125	25.1669285298021\\
0.015625	13.8897179063655\\
0.0078125	6.98074157453245\\
0.00390625	4.29241479871801\\
0.001953125	2.74843440392168\\
0.0009765625	1.66177582345191\\
0.00048828125	0.851323056697261\\
0.000244140625	0.491351977896981\\
0.0001220703125	0.284378820848071\\
6.103515625e-05	0.163123579347897\\
3.0517578125e-05	0.0833502999757074\\
1.52587890625e-05	0.0479348292067059\\
7.62939453125e-06	0.0276379359777081\\
3.814697265625e-06	0.015843096284218\\
1.9073486328125e-06	0.00910706960456529\\
9.5367431640625e-07	0.00469611814933546\\
};

\addplot [color=red,solid, line width = 0.35mm]
  table[row sep=crcr]{%
0.0001220703125	0.17217302600401\\
6.103515625e-05	0.102374693767758\\
3.0517578125e-05	0.0608723571124215\\
1.52587890625e-05	0.0361949200925391\\
7.62939453125e-06	0.0215216282505012\\
3.814697265625e-06	0.0127968367209698\\
1.9073486328125e-06	0.00760904463905269\\
};
\end{axis}
\end{tikzpicture}}
		\subfigure[Showalter]{\begin{tikzpicture}[baseline]

\begin{axis}[%
width=\fwidth,
height=\fheight,
scale only axis,
xmode=log,
xmin=9e-07,
xmax=1,
xminorticks=false,
xtick={0.000001,0.0001,0.01,1},
xticklabels = {$10^{-6}$,$10^{-4}$,$10^{-2}$,$1$},
ymode=log,
ymin=1e-04,
ymax=100,
yminorticks=false
]
\addplot [color=blue,dashed, line width = 0.35mm]
  table[row sep=crcr]{%
1	37.1597767877455\\
0.5	34.3333392706925\\
0.25	31.2970684628132\\
0.125	16.8719278351276\\
0.0625	1.98715740842792\\
0.03125	0.892240081588146\\
0.015625	0.67185462484027\\
0.0078125	0.594749819081486\\
0.00390625	0.56660392475253\\
0.001953125	0.551728687021033\\
0.0009765625	0.394609687130762\\
0.00048828125	0.0950088077749352\\
0.000244140625	0.0648926826750929\\
0.0001220703125	0.0478120472588625\\
6.103515625e-05	0.0208966699837359\\
3.0517578125e-05	0.0131597148717988\\
1.52587890625e-05	0.0071901108069144\\
7.62939453125e-06	0.00421867051218656\\
3.814697265625e-06	0.0024150392722145\\
1.9073486328125e-06	0.00138671877086013\\
9.5367431640625e-07	0.000801071597942116\\
};
\label{sho_antider_solit}

\addplot [color=black!50!green,densely dotted, line width = 0.35mm]
  table[row sep=crcr]{%
1	35.2841455444372\\
0.5	28.1635591725754\\
0.25	17.0285755808228\\
0.125	4.56152329135715\\
0.0625	1.07320297917782\\
0.03125	0.695689511387595\\
0.015625	0.541170821960925\\
0.0078125	0.417016082122283\\
0.00390625	0.26371776390154\\
0.001953125	0.144079569764039\\
0.0009765625	0.0859494765755051\\
0.00048828125	0.0543254893691252\\
0.000244140625	0.0318975470292153\\
0.0001220703125	0.0191915048164184\\
6.103515625e-05	0.0115059660962271\\
3.0517578125e-05	0.0068640811297351\\
1.52587890625e-05	0.00409471918573135\\
7.62939453125e-06	0.00244423051317736\\
3.814697265625e-06	0.00146121022941708\\
1.9073486328125e-06	0.00086879544847513\\
9.5367431640625e-07	0.000518179276199285\\
};
\label{sho_antider_opt}

\addplot [color=orange,dash dot, line width = 0.35mm]
  table[row sep=crcr]{%
1	37.7127269719132\\
0.5	29.5233262034195\\
0.25	18.6211834310358\\
0.125	5.7479161284786\\
0.0625	1.26588539785967\\
0.03125	0.771095925004973\\
0.015625	0.618710871410551\\
0.0078125	0.567479623739951\\
0.00390625	0.28853223941684\\
0.001953125	0.156280394376741\\
0.0009765625	0.0925202303085975\\
0.00048828125	0.0583404717961207\\
0.000244140625	0.0334092975260818\\
0.0001220703125	0.0200427689418476\\
6.103515625e-05	0.0119500528466579\\
3.0517578125e-05	0.0070656151806021\\
1.52587890625e-05	0.00418845554265169\\
7.62939453125e-06	0.00251448763718355\\
3.814697265625e-06	0.00149280213699389\\
1.9073486328125e-06	0.000882215049311644\\
9.5367431640625e-07	0.000524194014681729\\
};
\label{sho_antider_oracle}

\addplot [color=black,dash dot dot, line width = 0.35mm]
  table[row sep=crcr]{%
1	62.4315265271281\\
0.5	98.958352239738\\
0.25	96.1228028193589\\
0.125	68.5384370638214\\
0.0625	35.3422590321417\\
0.03125	11.6063375127261\\
0.015625	4.4200326403567\\
0.0078125	2.05467956714951\\
0.00390625	7.20090803288787\\
0.001953125	4.04806216574354\\
0.0009765625	1.55885620330392\\
0.00048828125	0.944602603742327\\
0.000244140625	0.614761899057554\\
0.0001220703125	0.315503031319444\\
6.103515625e-05	0.179570403511914\\
3.0517578125e-05	0.103606376605395\\
1.52587890625e-05	0.0590178219451205\\
7.62939453125e-06	0.0300833780197447\\
3.814697265625e-06	0.01722043727485\\
1.9073486328125e-06	0.00983682495115605\\
9.5367431640625e-07	0.00562973656855303\\
};
\label{sho_antider_poa}

\addplot [color=red,solid, line width = 0.35mm]
  table[row sep=crcr]{%
0.0001220703125	0.185798883938728\\
6.103515625e-05	0.11047667736975\\
3.0517578125e-05	0.0656898253849833\\
1.52587890625e-05	0.0390594038655543\\
7.62939453125e-06	0.023224860492341\\
3.814697265625e-06	0.0138095846712187\\
1.9073486328125e-06	0.00821122817312291\\
};
\label{sho_antider_orc}

\end{axis}
\end{tikzpicture}}
		\subfigure[Spectral cut-off]{\begin{tikzpicture}[baseline]

\begin{axis}[%
width=\fwidth,
height=\fheight,
scale only axis,
xmode=log,
xmin=9e-07,
xmax=1,
xminorticks=false,
xtick={0.000001,0.0001,0.01,1},
xticklabels = {$10^{-6}$,$10^{-4}$,$10^{-2}$,$1$},
ymode=log,
ymin=1e-04,
ymax=100,
yminorticks=false
]
\addplot [color=blue,dashed, line width = 0.35mm]
  table[row sep=crcr]{%
1	96.7613556194184\\
0.5	24.8213785979956\\
0.25	6.75844199469257\\
0.125	2.14393315909152\\
0.0625	5.86081615772723\\
0.03125	1.95266595408502\\
0.015625	0.970208837464283\\
0.0078125	0.709121830134616\\
0.00390625	0.641997038163557\\
0.001953125	0.575510773982861\\
0.0009765625	0.107859930567756\\
0.00048828125	0.100338157091682\\
0.000244140625	0.098635401557103\\
0.0001220703125	0.0414327534164545\\
6.103515625e-05	0.0311402650875834\\
3.0517578125e-05	0.0195493991359009\\
1.52587890625e-05	0.0133545461504953\\
7.62939453125e-06	0.00695213284867098\\
3.814697265625e-06	0.00406321550569711\\
1.9073486328125e-06	0.00255405955815617\\
9.5367431640625e-07	0.00163181533910967\\
};

\addplot [color=black!50!green,densely dotted, line width = 0.35mm]
  table[row sep=crcr]{%
1	96.7613556194184\\
0.5	24.8213785979956\\
0.25	6.75844199469257\\
0.125	2.14393315909152\\
0.0625	0.997051460096424\\
0.03125	0.712843885188594\\
0.015625	0.573558199311297\\
0.0078125	0.422586826401473\\
0.00390625	0.232003364823462\\
0.001953125	0.127786938123446\\
0.0009765625	0.0840702984029624\\
0.00048828125	0.0507816046812995\\
0.000244140625	0.0306456845114302\\
0.0001220703125	0.0186367550809227\\
6.103515625e-05	0.0113434729507071\\
3.0517578125e-05	0.00682000132755168\\
1.52587890625e-05	0.0041314637264253\\
7.62939453125e-06	0.00248736022550163\\
3.814697265625e-06	0.00149526163399991\\
1.9073486328125e-06	0.000897895803775084\\
9.5367431640625e-07	0.000537519376790159\\
};

\addplot [color=orange,dash dot, line width = 0.35mm]
  table[row sep=crcr]{%
1	96.7613556194184\\
0.5	24.8213785979956\\
0.25	6.75844199469257\\
0.125	2.14393315909152\\
0.0625	0.997051460096424\\
0.03125	0.712843885188594\\
0.015625	0.640733281439471\\
0.0078125	0.719843221232393\\
0.00390625	0.642462541664692\\
0.001953125	0.623415638031125\\
0.0009765625	0.130704581632681\\
0.00048828125	0.106267469066804\\
0.000244140625	0.0441205008764599\\
0.0001220703125	0.034092512270235\\
6.103515625e-05	0.0164782299780303\\
3.0517578125e-05	0.00916619588033719\\
1.52587890625e-05	0.00538264099881863\\
7.62939453125e-06	0.00308750365944124\\
3.814697265625e-06	0.0018028806927148\\
1.9073486328125e-06	0.00105886505901848\\
9.5367431640625e-07	0.00062379080862652\\
};

\addplot [color=black,dash dot dot, line width = 0.35mm]
  table[row sep=crcr]{%
1	981.371100162451\\
0.5	247.343558458185\\
0.25	63.5564096454467\\
0.125	17.2264934228083\\
0.0625	6.3995319704255\\
0.03125	2.59178869129253\\
0.015625	1.36499102176177\\
0.0078125	6.43701653642637\\
0.00390625	2.04691578033208\\
0.001953125	0.699393889900199\\
0.0009765625	2.491803278676\\
0.00048828125	0.761179075957973\\
0.000244140625	1.00390669663867\\
0.0001220703125	0.304038052500018\\
6.103515625e-05	0.256195375212989\\
3.0517578125e-05	0.170605588863469\\
1.52587890625e-05	0.0980113768609839\\
7.62939453125e-06	0.0818656428711169\\
3.814697265625e-06	0.0369326979707562\\
1.9073486328125e-06	0.0224090205045232\\
9.5367431640625e-07	0.0120532635538813\\
};

\addplot [color=red,solid, line width = 0.35mm]
  table[row sep=crcr]{%
0.0001220703125	0.483806778097092\\
6.103515625e-05	0.287673231399802\\
3.0517578125e-05	0.171051526788235\\
1.52587890625e-05	0.101707846344324\\
7.62939453125e-06	0.0604758472621365\\
3.814697265625e-06	0.0359591539249753\\
1.9073486328125e-06	0.0213814408485293\\
};
\end{axis}
\end{tikzpicture}}
		\caption{Simulation results for the antiderivate problem. Depicted are different noise levels $\sigma$ (x-axis) against empirical mean squared errors $\E{\left\Vert \hat f_\alpha - f\right\Vert_{\X}^2}$ simulated in $M = 10^4$ Monte Carlo  runs, for different parameter choices $\alpha$, namely SOLIT (eq.~\ref{e:limit:a}--\ref{e:limit:d}, \ref{sho_antider_solit}), optimal alpha (eq.~\ref{eq:alpha_opt}, \ref{sho_antider_opt}) and the oracle choice (eq.~\ref{e:orcr}, \ref{sho_antider_oracle}). Furthermore shown is the empirical price of adaptation (eq.~\ref{eq:poa}, \ref{sho_antider_poa}), and a slope (\ref{sho_antider_orc}) indicating the optimal rate of convergence $\mathcal O \left(\sigma^{3/4}\right)$.}
		\label{fig:antiderivative}
	\end{figure}
	
	\subsubsection{The 2D-gradiometry problem}\label{sec:gradiometry}
	
	As $\hat f \left(0\right)$ cannot be determined from $Tf^\dagger$, we adopt the exact solution from Section \ref{sec:antiderivative} to
	\[
	f^\dagger \left(x\right) = \frac{\pi}{2} - \left|x\right|, \qquad x \in \left[-\pi,\pi\right].
	\]
	As discussed in \cite{w18}, the optimal rate of convergence in this situation is $\mathcal O \bigl(\left(-\log\sigma\right)^{-3+\varepsilon}\bigr)$ for any $\varepsilon>0$. Again, the ideal data $g$ can be computed analytically as,
	\[
	g\left(x\right) = \frac{4}{\pi} \sum\limits_{m \in \mathbb N} \left(1 + \frac{1}{2m+1}\right) R^{-2m-3} \cos\bigl(\left(2m+1\right) x\bigr), \quad x \in \left[-\pi,\pi\right]
	\]
	and in our implementation we truncate this sum at $m = 128$. In our simulations, we set $R = 2$.
	
	In Figure \ref{fig:gradiometry} we again depict besides the ideal rate $\mathcal O \bigl(\left(-\log\sigma\right)^{-3}\bigr)$ the convergence rates obtained by SOLIT, \eref{e:orcr} and \eref{eq:alpha_opt} as well as empirical price of adaptation \eref{eq:poa}. Despite the faster decay of the singular values it seems that SOLIT still obtains the minimax rate and has again a surprisingly small loss compared to the oracle choice. Furthermore, the price of adaptation \eref{eq:poa} is also of the same order as the optimal rate.
	
	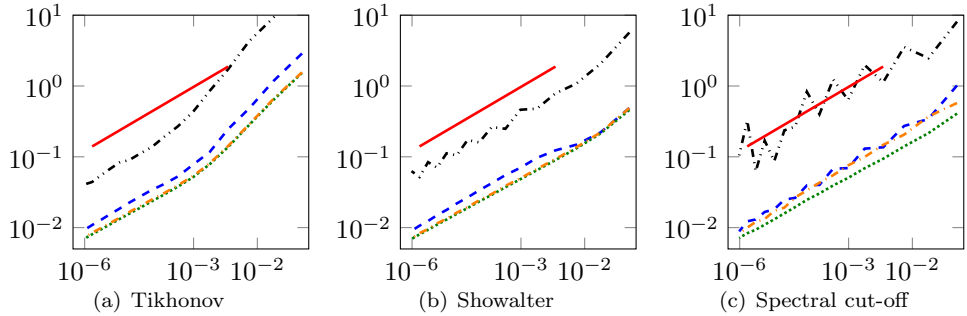
\begin{figure}[!htb]
		\setlength{\fwidth}{3.1cm}
		\setlength{\fheight}{3.1cm}
		\subfigure[Tikhonov]{\begin{tikzpicture}[baseline]

\begin{axis}[%
width=\fwidth,
height=\fheight,
scale only axis,
xmin=1.2,
xmax=2.7,
xminorticks=false,
xtick={1.52718,1.93264,2.6258},
xticklabels = {$10^{-2}$,$10^{-3}$,$10^{-6}$},
x dir=reverse,
ymode=log,
ymin=0.005,
ymax=10,
yminorticks=false
]
\addplot [color=blue,dashed, line width = 0.35mm]
  table[row sep=crcr]{%
1.24292499185244	2.88047527512016\\
1.42524654864639	1.17024522140305\\
1.57939722847365	0.480624294654273\\
1.71292862109817	0.246023511577616\\
1.83071165675456	0.116541971235295\\
1.93607217241238	0.0775863185990559\\
2.03138235221671	0.0530367492084689\\
2.11839372920634	0.0414327924738315\\
2.19843643687987	0.0341089508401843\\
2.27254440903359	0.0275455752267175\\
2.34153728052055	0.0225842412534342\\
2.40607580165812	0.0184237768036898\\
2.46670042347455	0.0153636329960267\\
2.5238588373145	0.0127973977460144\\
2.57792605858478	0.0108924522976458\\
2.62921935297233	0.00924315078786608\\
};

\addplot [color=black!50!green,densely dotted, line width = 0.35mm]
  table[row sep=crcr]{%
1.24292499185244	1.53116930696429\\
1.42524654864639	0.633788457236928\\
1.57939722847365	0.281472531314574\\
1.71292862109817	0.137685872331763\\
1.83071165675456	0.0784409714526546\\
1.93607217241238	0.0515437728902835\\
2.03138235221671	0.0380688762053849\\
2.11839372920634	0.0295133103370171\\
2.19843643687987	0.0235613336031645\\
2.27254440903359	0.0190858149498867\\
2.34153728052055	0.0157605972046439\\
2.40607580165812	0.0131304143028097\\
2.46670042347455	0.011041804241173\\
2.5238588373145	0.00937839634056699\\
2.57792605858478	0.00805605052778433\\
2.62921935297233	0.0069403711294165\\
};

\addplot [color=orange,dash dot, line width = 0.35mm]
  table[row sep=crcr]{%
1.24292499185244	1.56021116833149\\
1.42524654864639	0.653805906431651\\
1.57939722847365	0.287293821517211\\
1.71292862109817	0.142466887475196\\
1.83071165675456	0.0807196151099047\\
1.93607217241238	0.0535876157112054\\
2.03138235221671	0.0397461504112482\\
2.11839372920634	0.0309233553827775\\
2.19843643687987	0.024936473028965\\
2.27254440903359	0.0201716483075279\\
2.34153728052055	0.016617602400011\\
2.40607580165812	0.0136848238727714\\
2.46670042347455	0.0115283821304709\\
2.5238588373145	0.0098307357589696\\
2.57792605858478	0.00846262791499174\\
2.62921935297233	0.00724549966809749\\
};

\addplot [color=black,dash dot dot, line width = 0.35mm]
  table[row sep=crcr]{%
1.24292499185244	23.7241269637586\\
1.42524654864639	9.20014621770182\\
1.57939722847365	4.28264885034806\\
1.71292862109817	1.72397636484312\\
1.83071165675456	0.849642388363285\\
1.93607217241238	0.431956667032361\\
2.03138235221671	0.270769335095993\\
2.11839372920634	0.192642949988305\\
2.19843643687987	0.137560790633677\\
2.27254440903359	0.111243267454617\\
2.34153728052055	0.0902146663885546\\
2.40607580165812	0.0827002182336957\\
2.46670042347455	0.0670464694779534\\
2.5238588373145	0.0543841686581816\\
2.57792605858478	0.0441750201324961\\
2.62921935297233	0.0405519794239357\\
};

\addplot [color=red,solid, line width = 0.35mm]
  table[row sep=crcr]{%
1.71292862109817	1.87796741803241\\
1.83071165675456	1.3189565405111\\
1.93607217241238	0.961519318032592\\
2.03138235221671	0.722403694990678\\
2.11839372920634	0.556434790528121\\
2.19843643687987	0.437651032331631\\
2.27254440903359	0.35040791473491\\
2.34153728052055	0.284894612750398\\
2.40607580165812	0.234745927254051\\
2.46670042347455	0.1957092037518\\
2.5238588373145	0.164869567563888\\
2.57792605858478	0.140183600821197\\
};

\end{axis}
\end{tikzpicture}}
		\subfigure[Showalter]{\begin{tikzpicture}[baseline]

\begin{axis}[%
width=\fwidth,
height=\fheight,
scale only axis,
xmin=1.2,
xmax=2.7,
xminorticks=false,
xtick={1.52718,1.93264,2.6258},
xticklabels = {$10^{-2}$,$10^{-3}$,$10^{-6}$},
x dir=reverse,
ymode=log,
ymin=0.005,
ymax=10,
yminorticks=false
]
\addplot [color=blue,dashed, line width = 0.35mm]
  table[row sep=crcr]{%
1.24292499185244	0.4881026693739\\
1.42524654864639	0.221873783435026\\
1.57939722847365	0.148507435503784\\
1.71292862109817	0.123361967284777\\
1.83071165675456	0.096628168652816\\
1.93607217241238	0.0700365052224398\\
2.03138235221671	0.0552472845314825\\
2.11839372920634	0.0424461145475312\\
2.19843643687987	0.0336477333832978\\
2.27254440903359	0.026689856016259\\
2.34153728052055	0.0219162886682253\\
2.40607580165812	0.0177900672898922\\
2.46670042347455	0.014967210676357\\
2.5238588373145	0.0124170422484179\\
2.57792605858478	0.0106527455435437\\
2.62921935297233	0.00901278981242804\\
};
\label{sho_grad_solit}

\addplot [color=black!50!green,densely dotted, line width = 0.35mm]
  table[row sep=crcr]{%
1.24292499185244	0.458408816487755\\
1.42524654864639	0.214324441110194\\
1.57939722847365	0.131740024081525\\
1.71292862109817	0.0899470478371944\\
1.83071165675456	0.064768634105063\\
1.93607217241238	0.0482933414928583\\
2.03138235221671	0.0371835036253854\\
2.11839372920634	0.0290587074944546\\
2.19843643687987	0.0233898376048875\\
2.27254440903359	0.0189049948483937\\
2.34153728052055	0.015633982074276\\
2.40607580165812	0.0130087510638357\\
2.46670042347455	0.0110149364892248\\
2.5238588373145	0.00932534890480299\\
2.57792605858478	0.0080418048401385\\
2.62921935297233	0.00691923136243039\\
};
\label{sho_grad_opt}

\addplot [color=orange,dash dot, line width = 0.35mm]
  table[row sep=crcr]{%
1.24292499185244	0.4881026693739\\
1.42524654864639	0.221873783435026\\
1.57939722847365	0.136560560933647\\
1.71292862109817	0.0931340268376578\\
1.83071165675456	0.0674459901695637\\
1.93607217241238	0.0498057812075558\\
2.03138235221671	0.0389690612270437\\
2.11839372920634	0.0299277770895598\\
2.19843643687987	0.0243537533672953\\
2.27254440903359	0.0194539915844364\\
2.34153728052055	0.0162709103483855\\
2.40607580165812	0.0133777869382659\\
2.46670042347455	0.0114825776541855\\
2.5238588373145	0.00956019025231214\\
2.57792605858478	0.00834947181019872\\
2.62921935297233	0.00707998387858204\\
};
\label{sho_grad_oracle}

\addplot [color=black,dash dot dot, line width = 0.35mm]
  table[row sep=crcr]{%
1.24292499185244	5.6801182111536\\
1.42524654864639	2.0725813186149\\
1.57939722847365	1.0825624824678\\
1.71292862109817	0.787673147263953\\
1.83071165675456	0.478997888821913\\
1.93607217241238	0.462068999979764\\
2.03138235221671	0.248563114354214\\
2.11839372920634	0.263387862840199\\
2.19843643687987	0.159894383155212\\
2.27254440903359	0.172456450254858\\
2.34153728052055	0.104930321826031\\
2.40607580165812	0.11211048579995\\
2.46670042347455	0.0703218321987714\\
2.5238588373145	0.0840572626307961\\
2.57792605858478	0.05196344833686\\
2.62921935297233	0.0627552990791438\\
};
\label{sho_grad_poa}

\addplot [color=red,solid, line width = 0.35mm]
  table[row sep=crcr]{%
1.71292862109817	1.87796741803241\\
1.83071165675456	1.3189565405111\\
1.93607217241238	0.961519318032592\\
2.03138235221671	0.722403694990678\\
2.11839372920634	0.556434790528121\\
2.19843643687987	0.437651032331631\\
2.27254440903359	0.35040791473491\\
2.34153728052055	0.284894612750398\\
2.40607580165812	0.234745927254051\\
2.46670042347455	0.1957092037518\\
2.5238588373145	0.164869567563888\\
2.57792605858478	0.140183600821197\\
};
\label{sho_grad_orc}

\end{axis}
\end{tikzpicture}}
		\subfigure[Spectral cut-off]{\begin{tikzpicture}[baseline]

\begin{axis}[%
width=\fwidth,
height=\fheight,
scale only axis,
xmin=1.2,
xmax=2.7,
xminorticks=false,
xtick={1.52718,1.93264,2.6258},
xticklabels = {$10^{-2}$,$10^{-3}$,$10^{-6}$},
x dir=reverse,
ymode=log,
ymin=0.005,
ymax=10,
yminorticks=false
]
\addplot [color=blue,dashed, line width = 0.35mm]
  table[row sep=crcr]{%
1.24292499185244	1.01804668781476\\
1.42524654864639	0.341443131760484\\
1.57939722847365	0.272599336893987\\
1.71292862109817	0.136291340313292\\
1.83071165675456	0.13047493101373\\
1.93607217241238	0.0692772286069503\\
2.03138235221671	0.0679900700282021\\
2.11839372920634	0.0401556524945947\\
2.19843643687987	0.0394523002505023\\
2.27254440903359	0.0263322318614562\\
2.34153728052055	0.024861944417086\\
2.40607580165812	0.0181267601968329\\
2.46670042347455	0.0166466575843846\\
2.5238588373145	0.0130522855637434\\
2.57792605858478	0.0122716025986789\\
2.62921935297233	0.00884954258595151\\
};

\addplot [color=black!50!green,densely dotted, line width = 0.35mm]
  table[row sep=crcr]{%
1.24292499185244	0.407598771775157\\
1.42524654864639	0.215572789913231\\
1.57939722847365	0.140357703143579\\
1.71292862109817	0.0939310194189754\\
1.83071165675456	0.0689170121624923\\
1.93607217241238	0.0503666567210548\\
2.03138235221671	0.039397321760474\\
2.11839372920634	0.0303974986432019\\
2.19843643687987	0.0246916490711975\\
2.27254440903359	0.0195211010693305\\
2.34153728052055	0.015907271905355\\
2.40607580165812	0.013101365528916\\
2.46670042347455	0.011012369976345\\
2.5238588373145	0.0093449642081225\\
2.57792605858478	0.00833569222907436\\
2.62921935297233	0.00714411447507227\\
};

\addplot [color=orange,dash dot, line width = 0.35mm]
  table[row sep=crcr]{%
1.24292499185244	0.575109453388027\\
1.42524654864639	0.370451754325367\\
1.57939722847365	0.205735241000009\\
1.71292862109817	0.147433570375452\\
1.83071165675456	0.0995274750371442\\
1.93607217241238	0.0755964148777115\\
2.03138235221671	0.0549053691492974\\
2.11839372920634	0.0430860435692723\\
2.19843643687987	0.0335243096410759\\
2.27254440903359	0.0267556293221573\\
2.34153728052055	0.0191789514095192\\
2.40607580165812	0.0172350801324341\\
2.46670042347455	0.0132426669929842\\
2.5238588373145	0.0120453430312694\\
2.57792605858478	0.0100011061425028\\
2.62921935297233	0.00834351435144194\\
};

\addplot [color=black,dash dot dot, line width = 0.35mm]
  table[row sep=crcr]{%
1.24292499185244	8.09847836942899\\
1.42524654864639	2.47084858301074\\
1.57939722847365	3.55128767388072\\
1.71292862109817	1.12885165534423\\
1.83071165675456	1.97792184642753\\
1.93607217241238	0.635465678983422\\
2.03138235221671	1.23767341716217\\
2.11839372920634	0.398990262576602\\
2.19843643687987	0.831188451134735\\
2.27254440903359	0.26747029867943\\
2.34153728052055	0.23653288890722\\
2.40607580165812	0.0868113214303467\\
2.46670042347455	0.169083828815685\\
2.5238588373145	0.0624212061322939\\
2.57792605858478	0.324877103054261\\
2.62921935297233	0.104921968179325\\
};

\addplot [color=red,solid, line width = 0.35mm]
  table[row sep=crcr]{%
1.71292862109817	1.87796741803241\\
1.83071165675456	1.3189565405111\\
1.93607217241238	0.961519318032592\\
2.03138235221671	0.722403694990678\\
2.11839372920634	0.556434790528121\\
2.19843643687987	0.437651032331631\\
2.27254440903359	0.35040791473491\\
2.34153728052055	0.284894612750398\\
2.40607580165812	0.234745927254051\\
2.46670042347455	0.1957092037518\\
2.5238588373145	0.164869567563888\\
2.57792605858478	0.140183600821197\\
};

\end{axis}
\end{tikzpicture}}
		\caption{Results for the gradiometry problem. Depicted are different noise levels $\sigma$ (x-axis) against empirical mean squared errors $\E{\left\Vert \hat f_\alpha - f\right\Vert_{\X}^2}$ simulated in $M = 10^4$ Monte Carlo  runs, for different parameter choices $\alpha$, namely SOLIT (eq.~\ref{e:limit:a}--\ref{e:limit:d}, \ref{sho_grad_solit}), optimal alpha (eq.~\ref{eq:alpha_opt}, \ref{sho_grad_opt}) and the oracle choice (eq.~\ref{e:orcr}, \ref{sho_grad_oracle}). Furthermore shown is the empirical price of adaptation (eq.~\ref{eq:poa}, \ref{sho_grad_poa}) and a slope (\ref{sho_grad_orc}) indicating the optimal rate of convergence $\mathcal O \left(\left(-\log\sigma\right)^{-3}\right)$.}
		\label{fig:gradiometry}
	\end{figure}
	
	\subsubsection{The heat problem}\label{sec:heat}
	
	As exact solution we choose again
	\[
	f^\dagger \left(x\right) = \frac{\pi}{2} - \left|x\right|, \qquad x \in \left[-\pi,\pi\right].
	\]
	Similarly as in \cite[Remark 15]{h00} it can be seen that the optimal rate of convergence is $\mathcal O \bigl(\left(-\log\sigma\right)^{-3/2+\varepsilon}\bigr)$ for any $\varepsilon>0$. Again, the ideal data $g$ can be computed analytically as a infinite sum, which is in our simulations truncated at a different value that the formula used for the implementation of the operator $T$. We set $\bar t = 0.1$.
	
	The results depicted in Figure \ref{fig:heat} show once more that the rate obtained for SOLIT is very close to the oracle rate. However, it seems unclear if his rate of convergence equals the minimax rate (which might again be due to the difficulty of choosing suitable candidate parameters $\alpha_k$), and the behavior of the empirical price of adaptation especially for spectral cut-off regularization is somewhat irregular.
	
	\begin{figure}[!htb]
		\setlength{\fwidth}{3.1cm}
		\setlength{\fheight}{3.1cm}
		\subfigure[Tikhonov]{\begin{tikzpicture}[baseline]

\begin{axis}[%
width=\fwidth,
height=\fheight,
scale only axis,
xmin=1.2,
xmax=2.7,
xminorticks=false,
xtick={1.52718,1.93264,2.6258},
xticklabels = {$10^{-2}$,$10^{-3}$,$10^{-6}$},
x dir=reverse,
ymode=log,
ymin=0.05,
ymax=10,
yminorticks=false
]
\addplot [color=blue,dashed, line width = 0.35mm]
  table[row sep=crcr]{%
1.24292499185244	0.975478739664182\\
1.42524654864639	0.666611996525687\\
1.57939722847365	0.463156231352703\\
1.71292862109817	0.367588180940498\\
1.83071165675456	0.319260802883624\\
1.93607217241238	0.29040721213431\\
2.03138235221671	0.26641945549389\\
2.11839372920634	0.158564633896099\\
2.19843643687987	0.142891222449722\\
2.27254440903359	0.135026690198909\\
2.34153728052055	0.131106823081512\\
2.40607580165812	0.127907066639165\\
2.46670042347455	0.0955538007469746\\
2.5238588373145	0.0720452346963835\\
2.57792605858478	0.0696553191094004\\
2.62921935297233	0.0682560581667734\\
};

\addplot [color=black!50!green,densely dotted, line width = 0.35mm]
  table[row sep=crcr]{%
1.24292499185244	0.618054638706466\\
1.42524654864639	0.423726391639008\\
1.57939722847365	0.328953184511056\\
1.71292862109817	0.265328915135003\\
1.83071165675456	0.21393566338254\\
1.93607217241238	0.167944678341534\\
2.03138235221671	0.142966250606795\\
2.11839372920634	0.128144437662801\\
2.19843643687987	0.113804923373599\\
2.27254440903359	0.0991549641161567\\
2.34153728052055	0.083561404344954\\
2.40607580165812	0.0735485822849022\\
2.46670042347455	0.0685607239609993\\
2.5238588373145	0.0642069592053288\\
2.57792605858478	0.0593347633981968\\
2.62921935297233	0.054312100190304\\
};

\addplot [color=orange,dash dot, line width = 0.35mm]
  table[row sep=crcr]{%
1.24292499185244	0.65363892235629\\
1.42524654864639	0.443773425697594\\
1.57939722847365	0.353036139293654\\
1.71292862109817	0.307383426941451\\
1.83071165675456	0.237187187506441\\
1.93607217241238	0.173950129488207\\
2.03138235221671	0.148698113950975\\
2.11839372920634	0.137307925703092\\
2.19843643687987	0.131504388893076\\
2.27254440903359	0.127232765917372\\
2.34153728052055	0.0857866769064748\\
2.40607580165812	0.0749867623094092\\
2.46670042347455	0.0705728013909638\\
2.5238588373145	0.0686655721470847\\
2.57792605858478	0.0678190173393523\\
2.62921935297233	0.067271236638863\\
};

\addplot [color=black,dash dot dot, line width = 0.35mm]
  table[row sep=crcr]{%
1.24292499185244	6.44714369388595\\
1.42524654864639	3.72517857836238\\
1.57939722847365	2.18616713838283\\
1.71292862109817	1.37061669997975\\
1.83071165675456	1.82759420003658\\
1.93607217241238	1.82055312378476\\
2.03138235221671	0.923508913392674\\
2.11839372920634	0.558683551122788\\
2.19843643687987	0.434679564207846\\
2.27254440903359	0.313739334260554\\
2.34153728052055	1.32892111153476\\
2.40607580165812	0.653417557868268\\
2.46670042347455	0.312694413660384\\
2.5238588373145	0.209715376087302\\
2.57792605858478	0.154280665623845\\
2.62921935297233	0.121678380436657\\
};

\addplot [color=red,solid, line width = 0.35mm]
  table[row sep=crcr]{%
1.71292862109817	4.16953657126024\\
1.83071165675456	3.49429046985887\\
1.93607217241238	2.98347750656297\\
2.03138235221671	2.58603108033047\\
2.11839372920634	2.26960823632473\\
2.19843643687987	2.01283297888707\\
2.27254440903359	1.80107050611323\\
2.34153728052055	1.62399945558896\\
2.40607580165812	1.47415379197171\\
2.46670042347455	1.34601340738085\\
2.5238588373145	1.23541824333637\\
2.57792605858478	1.1391800388152\\
};

\end{axis}
\end{tikzpicture}}
		\subfigure[Showalter]{\begin{tikzpicture}[baseline]

\begin{axis}[%
width=\fwidth,
height=\fheight,
scale only axis,
xmin=1.2,
xmax=2.7,
xminorticks=false,
xtick={1.52718,1.93264,2.6258},
xticklabels = {$10^{-2}$,$10^{-3}$,$10^{-6}$},
x dir=reverse,
ymode=log,
ymin=0.05,
ymax=10,
yminorticks=false
]
\addplot [color=blue,dashed, line width = 0.35mm]
  table[row sep=crcr]{%
1.24292499185244	0.872018264560948\\
1.42524654864639	0.796331522637542\\
1.57939722847365	0.444574473853662\\
1.71292862109817	0.31583591114682\\
1.83071165675456	0.299787272113757\\
1.93607217241238	0.293936163914749\\
2.03138235221671	0.290747081975436\\
2.11839372920634	0.138114614285917\\
2.19843643687987	0.131605798432062\\
2.27254440903359	0.130380233927232\\
2.34153728052055	0.129931248159934\\
2.40607580165812	0.129880167286254\\
2.46670042347455	0.0966610339862536\\
2.5238588373145	0.0681397274490833\\
2.57792605858478	0.0676348899779918\\
2.62921935297233	0.0675606496919265\\
};
\label{sho_heat_solit}

\addplot [color=black!50!green,densely dotted, line width = 0.35mm]
  table[row sep=crcr]{%
1.24292499185244	0.568030237480298\\
1.42524654864639	0.391234048487936\\
1.57939722847365	0.313895761282014\\
1.71292862109817	0.262561609464909\\
1.83071165675456	0.207700528331047\\
1.93607217241238	0.160276030164206\\
2.03138235221671	0.13817866805714\\
2.11839372920634	0.126035095026467\\
2.19843643687987	0.112995296989676\\
2.27254440903359	0.0974315160477326\\
2.34153728052055	0.0813253390631532\\
2.40607580165812	0.0715901118655842\\
2.46670042347455	0.0675897461280236\\
2.5238588373145	0.0640851518920954\\
2.57792605858478	0.0591232230717065\\
2.62921935297233	0.0538025249595201\\
};
\label{sho_heat_opt}

\addplot [color=orange,dash dot, line width = 0.35mm]
  table[row sep=crcr]{%
1.24292499185244	0.603948475549504\\
1.42524654864639	0.410449575523851\\
1.57939722847365	0.33120190282049\\
1.71292862109817	0.305371725533585\\
1.83071165675456	0.223505168152498\\
1.93607217241238	0.166634559440717\\
2.03138235221671	0.142171255273235\\
2.11839372920634	0.133662083732442\\
2.19843643687987	0.131037969857751\\
2.27254440903359	0.130107819095951\\
2.34153728052055	0.0849358954149372\\
2.40607580165812	0.0731671595082892\\
2.46670042347455	0.0691324187729631\\
2.5238588373145	0.0679493222415927\\
2.57792605858478	0.0676348899779918\\
2.62921935297233	0.0675090220526711\\
};
\label{sho_heat_oracle}

\addplot [color=black,dash dot dot, line width = 0.35mm]
  table[row sep=crcr]{%
1.24292499185244	9.2551556310492\\
1.42524654864639	4.73513254801971\\
1.57939722847365	1.95195807380996\\
1.71292862109817	0.916772926893901\\
1.83071165675456	3.58883659755064\\
1.93607217241238	2.5966373779772\\
2.03138235221671	1.15718514452779\\
2.11839372920634	0.500266172580972\\
2.19843643687987	0.272036959635298\\
2.27254440903359	0.166738015128052\\
2.34153728052055	1.88065647914222\\
2.40607580165812	0.822332945931205\\
2.46670042347455	0.336090638649255\\
2.5238588373145	0.178061067728752\\
2.57792605858478	0.107316630178608\\
2.62921935297233	0.0759136591493278\\
};
\label{sho_heat_poa}

\addplot [color=red,solid, line width = 0.35mm]
  table[row sep=crcr]{%
1.71292862109817	4.16953657126024\\
1.83071165675456	3.49429046985887\\
1.93607217241238	2.98347750656297\\
2.03138235221671	2.58603108033047\\
2.11839372920634	2.26960823632473\\
2.19843643687987	2.01283297888707\\
2.27254440903359	1.80107050611323\\
2.34153728052055	1.62399945558896\\
2.40607580165812	1.47415379197171\\
2.46670042347455	1.34601340738085\\
2.5238588373145	1.23541824333637\\
2.57792605858478	1.1391800388152\\
};
\label{sho_heat_ocr}

\end{axis}
\end{tikzpicture}
		\subfigure[Spectral cut-off]{\begin{tikzpicture}[baseline]

\begin{axis}[%
width=\fwidth,
height=\fheight,
scale only axis,
xmin=1.2,
xmax=2.7,
xminorticks=false,
xtick={1.52718,1.93264,2.6258},
xticklabels = {$10^{-2}$,$10^{-3}$,$10^{-6}$},
x dir=reverse,
ymode=log,
ymin=0.05,
ymax=10,
yminorticks=false
]
\addplot [color=blue,dashed, line width = 0.35mm]
  table[row sep=crcr]{%
1.24292499185244	0.983112617157345\\
1.42524654864639	0.968061812311923\\
1.57939722847365	0.327427871604338\\
1.71292862109817	0.306811510669348\\
1.83071165675456	0.304331247739858\\
1.93607217241238	0.303713549003219\\
2.03138235221671	0.302441201590549\\
2.11839372920634	0.132327702678806\\
2.19843643687987	0.131397089706769\\
2.27254440903359	0.13115887413472\\
2.34153728052055	0.131100080662227\\
2.40607580165812	0.13098964879145\\
2.46670042347455	0.111472265968311\\
2.5238588373145	0.067895042776379\\
2.57792605858478	0.0676535653735625\\
2.62921935297233	0.0675926163742698\\
};

\addplot [color=black!50!green,densely dotted, line width = 0.35mm]
  table[row sep=crcr]{%
1.24292499185244	0.503379721863823\\
1.42524654864639	0.354956313719264\\
1.57939722847365	0.306783996581759\\
1.71292862109817	0.259620164359516\\
1.83071165675456	0.196510933139492\\
1.93607217241238	0.151076965970062\\
2.03138235221671	0.135819828156028\\
2.11839372920634	0.130855653549419\\
2.19843643687987	0.123767763360951\\
2.27254440903359	0.108297112873446\\
2.34153728052055	0.0862333251143964\\
2.40607580165812	0.072842822215338\\
2.46670042347455	0.0685531833076534\\
2.5238588373145	0.0660944892143017\\
2.57792605858478	0.0619029736039033\\
2.62921935297233	0.0549994563421415\\
};

\addplot [color=orange,dash dot, line width = 0.35mm]
  table[row sep=crcr]{%
1.24292499185244	1.03411211194287\\
1.42524654864639	0.984174497183428\\
1.57939722847365	0.971498983243297\\
1.71292862109817	0.96837558227078\\
1.83071165675456	0.314933643728239\\
1.93607217241238	0.306434856966399\\
2.03138235221671	0.304226175736311\\
2.11839372920634	0.30368610922553\\
2.19843643687987	0.131397089706769\\
2.27254440903359	0.13115887413472\\
2.34153728052055	0.106492260888866\\
2.40607580165812	0.0984294496990443\\
2.46670042347455	0.0963513770069548\\
2.5238588373145	0.0958466837337356\\
2.57792605858478	0.0957227803825343\\
2.62921935297233	0.095690698413593\\
};

\addplot [color=black,dash dot dot, line width = 0.35mm]
  table[row sep=crcr]{%
1.24292499185244	3.74829685423676\\
1.42524654864639	1.46611368913078\\
1.57939722847365	0.742019793374747\\
1.71292862109817	0.4404678407079\\
1.83071165675456	1.31926128348421\\
1.93607217241238	0.550281105381859\\
2.03138235221671	0.297132906637755\\
2.11839372920634	0.202567768027601\\
2.19843643687987	0.27183000642975\\
2.27254440903359	0.171287258614216\\
2.34153728052055	2.47313266496472\\
2.40607580165812	0.826965399225392\\
2.46670042347455	0.338221071994888\\
2.5238588373145	0.173929694107031\\
2.57792605858478	0.108098415557683\\
2.62921935297233	0.0753555783420732\\
};

\addplot [color=red,solid, line width = 0.35mm]
  table[row sep=crcr]{%
1.71292862109817	4.16953657126024\\
1.83071165675456	3.49429046985887\\
1.93607217241238	2.98347750656297\\
2.03138235221671	2.58603108033047\\
2.11839372920634	2.26960823632473\\
2.19843643687987	2.01283297888707\\
2.27254440903359	1.80107050611323\\
2.34153728052055	1.62399945558896\\
2.40607580165812	1.47415379197171\\
2.46670042347455	1.34601340738085\\
2.5238588373145	1.23541824333637\\
2.57792605858478	1.1391800388152\\
};

\end{axis}
\end{tikzpicture}
		\caption{Results for the heat problem. Depicted are different noise levels $\sigma$ (x-axis) against empirical mean squared errors $\E{\left\Vert \hat f_\alpha - f\right\Vert_{\X}^2}$ simulated in $M = 10^4$ Monte Carlo  runs, for different parameter choices $\alpha$, namely SOLIT (eq.~\ref{e:limit:a}--\ref{e:limit:d}, \ref{sho_heat_solit}), optimal alpha (eq.~\ref{eq:alpha_opt}, \ref{sho_heat_opt}) and the oracle choice (eq.~\ref{e:orcr}, \ref{sho_heat_oracle}). Furthermore shown is the empirical price of adaptation (eq.~\ref{eq:poa}, \ref{sho_heat_poa}) and a slope (\ref{sho_heat_ocr}) indicating the optimal rate of convergence $\mathcal O \left(\left(-\log\sigma\right)^{-\frac32}\right)$.}
		\label{fig:heat}
	\end{figure}
	
	\subsubsection{Comparison with the classical Lepski{\u\i}-type balancing principle}
	
	{\rev For the sake of completeness, we also compare the performance of SOLIT with the classical Lepski{\u\i}-type balancing principle \eref{eq:lepskij} for the Tikhonov regularization. For a theoretical analysis of this parameter choice method \eref{eq:lepskij} we refer to \cite{mp03,bh05,m06,MP06,wh12} and emphasize that in all these papers the tuning parameter $\kappa \geq 1$ has to be chosen in dependence of the noise level $\sigma$. However, in our simulations we fix $\kappa = 1$ as in \cite{w18}, so that the corresponding Lepski{\u\i} method will empirically perform well though without a theoretical justification, see also \cite{bl11}. }
	
	\begin{figure}[!htb]
		\setlength{\fwidth}{3.1cm}
		\setlength{\fheight}{3.1cm}
		\subfigure[Antiderivative]{\begin{tikzpicture}[baseline]

\begin{axis}[%
width=\fwidth,
height=\fheight,
scale only axis,
xmode=log,
xmin=1e-08,
xmax=0.03125,
xminorticks=false,
xtick={0.00000001,0.000001,0.0001,0.01},
xticklabels = {$10^{-8}$,$10^{-4}$,$10^{-2}$},
ymode=log,
ymin=1e-05,
ymax=100,
yminorticks=false
]
\addplot [color=blue,dashed, line width = 0.35mm]
   table[row sep=crcr]{%
0.03125	3.14591653791473\\
0.015625	1.58605553480818\\
0.0078125	0.967023962122085\\
0.00390625	0.618600255278018\\
0.001953125	0.458512534085527\\
0.0009765625	0.292612226153668\\
0.00048828125	0.152770031696011\\
0.000244140625	0.0839076830003627\\
0.0001220703125	0.0485157463442191\\
6.103515625e-05	0.0278556619582846\\
3.0517578125e-05	0.0159053951388464\\
1.52587890625e-05	0.00913939272884465\\
7.62939453125e-06	0.00523082709257938\\
3.814697265625e-06	0.00300860942750002\\
1.9073486328125e-06	0.00173005269205044\\
9.5367431640625e-07	0.000999466742315228\\
4.76837158203125e-07	0.000577524349520822\\
2.38418579101562e-07	0.000335315726614192\\
1.19209289550781e-07	0.00019438855559016\\
5.96046447753906e-08	0.000113137759892272\\
2.98023223876953e-08	6.5747835080111e-05\\
};

\addplot [color=green, dotted, line width = 0.35mm]
  table[row sep=crcr]{%
0.03125	8.29785921278785\\
0.015625	3.63343880818637\\
0.0078125	2.02740113566384\\
0.00390625	1.18343771869766\\
0.001953125	0.736103488990482\\
0.0009765625	0.490777402815259\\
0.00048828125	0.300349736915832\\
0.000244140625	0.168432705613506\\
0.0001220703125	0.1008788095177\\
6.103515625e-05	0.0612324757398027\\
3.0517578125e-05	0.036938617779984\\
1.52587890625e-05	0.0214022467431717\\
7.62939453125e-06	0.0122903212939352\\
3.814697265625e-06	0.00745515097549409\\
1.9073486328125e-06	0.00455048408185969\\
9.5367431640625e-07	0.00277197789181621\\
4.76837158203125e-07	0.00160322978084378\\
2.38418579101562e-07	0.000905551755873286\\
1.19209289550781e-07	0.00055087968009488\\
5.96046447753906e-08	0.000335847261715766\\
2.98023223876953e-08	0.000204482559521951\\
};

\addplot [color=red,solid, line width = 0.35mm]
  table[row sep=crcr]{%
9.5367431640625e-07	3.14591653791473\\
4.76837158203125e-07	1.87057316504646\\
2.38418579101562e-07	1.1122494585032\\
1.19209289550781e-07	0.661347484854967\\
5.96046447753906e-08	0.393239567239341\\
};

\end{axis}
\end{tikzpicture}
		\subfigure[Gradiometry]{\begin{tikzpicture}[baseline]

\begin{axis}[%
width=\fwidth,
height=\fheight,
scale only axis,
xmin=1.2,
xmax=3,
xminorticks=false,
xtick={1.52718,2.22033,2.91347},
xticklabels = {$10^{-2}$,$10^{-4}$,$10^{-8}$},
x dir=reverse,
ymode=log,
ymin=0.005,
ymax=10,
yminorticks=false
]
\addplot [color=blue,dashed, line width = 0.35mm]
  table[row sep=crcr]{%
1.24292499185244	2.87909523346791\\
1.42524654864639	1.16777442535756\\
1.57939722847365	0.479504726190234\\
1.71292862109817	0.246149492684402\\
1.83071165675456	0.116566977892671\\
1.93607217241238	0.0777382056021961\\
2.03138235221671	0.0530964431324207\\
2.11839372920634	0.0414231563444821\\
2.19843643687987	0.0341081045023091\\
2.27254440903359	0.0275288068984984\\
2.34153728052055	0.0225781756783927\\
2.40607580165812	0.0184206330225713\\
2.46670042347455	0.0153642557373841\\
2.5238588373145	0.0127967978584949\\
2.57792605858478	0.0109014010531315\\
2.62921935297233	0.00923411854278051\\
2.67800951714176	0.00799087583786051\\
2.72452953277665	0.00687811498022994\\
2.76898129534749	0.00602202832464446\\
2.81154090976628	0.00525038238350084\\
2.85236290428654	0.00464916864149194\\
};

\addplot [color=green, dotted, line width = 0.35mm]
  table[row sep=crcr]{%
1.24292499185244	16.0435337135039\\
1.42524654864639	5.5466857421135\\
1.57939722847365	2.304257462614\\
1.71292862109817	0.989585737963332\\
1.83071165675456	0.437931520477081\\
1.93607217241238	0.209593872796117\\
2.03138235221671	0.114103625958202\\
2.11839372920634	0.0641459194851208\\
2.19843643687987	0.0487098540729931\\
2.27254440903359	0.0367626324095552\\
2.34153728052055	0.0291672115639124\\
2.40607580165812	0.0234820213731387\\
2.46670042347455	0.0190293682880547\\
2.5238588373145	0.01578255650663\\
2.57792605858478	0.0131294610225409\\
2.62921935297233	0.0111084979777438\\
2.67800951714176	0.00940936436587437\\
2.72452953277665	0.00811071554294788\\
2.76898129534749	0.00697364617351146\\
2.81154090976628	0.00609553777095486\\
2.85236290428654	0.00530786966092076\\
};

\addplot [color=red,solid, line width = 0.35mm]
  table[row sep=crcr]{%
1.71292862109817	1.87796741803241\\
1.83071165675456	1.3189565405111\\
1.93607217241238	0.961519318032592\\
2.03138235221671	0.722403694990678\\
2.11839372920634	0.556434790528121\\
2.19843643687987	0.437651032331631\\
2.27254440903359	0.35040791473491\\
2.34153728052055	0.284894612750398\\
2.40607580165812	0.234745927254051\\
2.46670042347455	0.1957092037518\\
2.5238588373145	0.164869567563888\\
2.57792605858478	0.140183600821197\\
};

\end{axis}
\end{tikzpicture}
		\subfigure[Heat]{\begin{tikzpicture}[baseline]

\begin{axis}[%
width=\fwidth,
height=\fheight,
scale only axis,
xmin=1.2,
xmax=3,
xminorticks=false,
xtick={1.52718,2.22033,2.91347},
xticklabels = {$10^{-2}$,$10^{-4}$,$10^{-8}$},
x dir=reverse,
ymode=log,
ymin=0.05,
ymax=10,
yminorticks=false
]

\addplot [color=blue,dashed, line width = 0.35mm]
   table[row sep=crcr]{%
1.24292499185244	0.972692773696537\\
1.42524654864639	0.668228837723599\\
1.57939722847365	0.463687776373152\\
1.71292862109817	0.367291818226943\\
1.83071165675456	0.319110421666781\\
1.93607217241238	0.290277071372924\\
2.03138235221671	0.26643492694494\\
2.11839372920634	0.158671108031446\\
2.19843643687987	0.142919623463261\\
2.27254440903359	0.135018786703197\\
2.34153728052055	0.131110236537239\\
2.40607580165812	0.127911974690013\\
2.46670042347455	0.0952485258552639\\
2.5238588373145	0.0720531989792376\\
2.57792605858478	0.0696660530749462\\
2.62921935297233	0.06825562363946\\
2.67800951714176	0.0676674884996674\\
2.72452953277665	0.0673264145519381\\
2.76898129534749	0.0662331980711002\\
2.81154090976628	0.04121924274665\\
2.85236290428654	0.04015326902892\\
};

\addplot [color=green,dotted, line width = 0.35mm]
  table[row sep=crcr]{%
1.24292499185244	2.74489709710471\\
1.42524654864639	1.51509618046226\\
1.57939722847365	0.867995556854201\\
1.71292862109817	0.547558820073195\\
1.83071165675456	0.400576698684373\\
1.93607217241238	0.333165138693539\\
2.03138235221671	0.292848445777196\\
2.11839372920634	0.209071248796984\\
2.19843643687987	0.16187986373989\\
2.27254440903359	0.142118667745703\\
2.34153728052055	0.135455277638012\\
2.40607580165812	0.129845329543719\\
2.46670042347455	0.101012528543732\\
2.5238588373145	0.080551968170139\\
2.57792605858478	0.0728340182598924\\
2.62921935297233	0.0696024875946655\\
2.67800951714176	0.0683742119355562\\
2.72452953277665	0.0676434718159922\\
2.76898129534749	0.0583339364239753\\
2.81154090976628	0.0461634039018943\\
2.85236290428654	0.041957785880249\\
};
\label{lep}

\addplot [color=red,solid, line width = 0.35mm]
  table[row sep=crcr]{%
1.71292862109817	4.16953657126024\\
1.83071165675456	3.49429046985887\\
1.93607217241238	2.98347750656297\\
2.03138235221671	2.58603108033047\\
2.11839372920634	2.26960823632473\\
2.19843643687987	2.01283297888707\\
2.27254440903359	1.80107050611323\\
2.34153728052055	1.62399945558896\\
2.40607580165812	1.47415379197171\\
2.46670042347455	1.34601340738085\\
2.5238588373145	1.23541824333637\\
2.57792605858478	1.1391800388152\\
};

\end{axis}
\end{tikzpicture}
		\caption{{\rev Comparison of SOLIT with the classical Lepski{\u\i}-type balancing principle \eref{eq:lepskij} for all three different testing problems with Tikhonov regularization. Depicted are different noise levels $\sigma$ (x-axis) against empirical mean squared errors $\E{\left\Vert \hat f_\alpha - f\right\Vert_{\X}^2}$ simulated in $M = 10^4$ Monte Carlo  runs, for different parameter choices $\alpha$, namely, SOLIT (eq.~\ref{e:limit:a}--\ref{e:limit:d}, \ref{sho_heat_solit}), Lepski{\u\i} (eq.~\ref{eq:lepskij} with $\kappa = 1$, \ref{lep}) and the optimal rate of convergence (\ref{sho_heat_ocr}).}}
		\label{fig:lepskij}
	\end{figure}
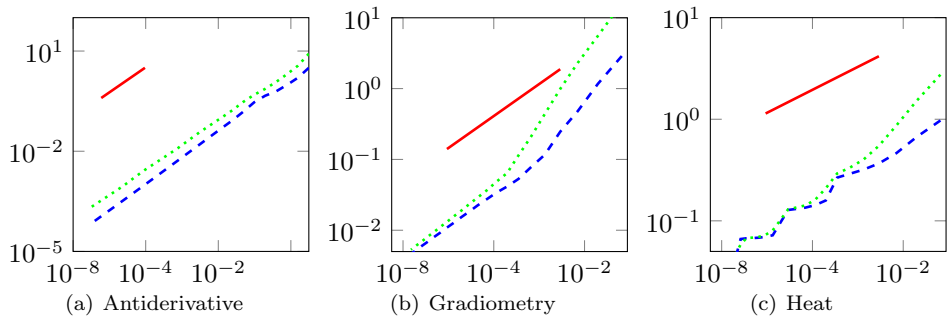
	
{\rev The results depicted in Figure \ref{fig:lepskij} show a comparable behavior for all three testing problems with a slightly superior performance of SOLIT. However, it cannot be expected that the potential difference of a $\log$-factor will be visible in such a simulation. Beyond these numerical findings, we emphasize that SOLIT is proven to yield the optimal rate of convergence in many situations without a need of tuning parameters in dependency of the noise level $\sigma$ and the ill-posedness of the problem, which is a substantial advantage over the classical Lepski{\u\i}-type balancing principle.}

\section{Conclusion and outlook}\label{sec:conclusions}
	
	In the setup of statistical linear inverse problems with white (i.e., independent) Gaussian noises, we have investigated the SOLIT rule of choosing the regularization parameter for (ordered) filter based regularization methods. SOLIT is an automatic and data adaptive procedure, as it requires only the knowledge of forward operator and the noise level. From a computational perspective, it involves $\mathcal O(\log \sigma)$ number of computations of the estimator $\hat f_{\alpha}$ and $\mathcal O\bigl((\log \sigma)^2\bigr)$ number of evaluations of the variance estimate $\trace \bigl(\svar{\hat f_{\alpha}}\bigr)$. The overall computational complexity of SOLIT is thus of nearly (i.e., up to log-factors) the same order as that of evaluation of  $\hat f_{\alpha}$ for a fixed~$\alpha$. In particular, a singular value decomposition of the forward operator is not required, unless spectral cut-off and its variants are used as regularization methods. In short, SOLIT is piratically attractive for various applications (even with massive data), which has also been demonstrated by our simulation study in \cref{sec:implementation}. 
	
	Besides desirable empirical performance, the SOLIT rule is shown to attain the sharp order of optimal adaptation rates under various smoothness conditions (including logarithmic and polynomial smoothness) for mildly ill-posed problems. This is in sharp contrast to the standard Lepski\u{\i} rule, where one often losses unnecessarily a log-factor in the convergence rates. In case of severely ill-posed problems, we have also established the convergence rates for SOLIT under general smoothness conditions, but it remains open whether such rates are adaptively optimal or not. 
	
	In addition, we would like to point out that there are at least two ways of estimation of the noise level (required by SOLIT) when it is not available in practice. The one approach is to employ bootstrap procedures as it is done in \cite{SpWi19}, which allows to estimate the threshold $\thd_{m_1, m_2}$ (cf.\ \cref{d:solit}) without knowing the noise level. The other approach is to estimate the noise level directly from the data, see, e.g., \cite{Spo02,MBWF05,ArvL12}. Further, it may be possible to extend SOLIT beyond the Gaussian noises. If the distribution of the noises is known, e.g., Poisson, one could modify SOLIT by simply replacing $z_{m_1,m_2}(x)$ (cf.\ \cref{d:solit}) with the $(1-x)$-quantile of the noise distribution. Otherwise, one could attempt to estimate such a quantile from the data, e.g., by following the idea in \cite{ZoYu08}. All of these extensions of SOLIT are interesting topics for future research.

\section*{Acknowledgments}
	
	FW wants to thank M. Reiss and V. Spokoiny for pointing his attention to the topic at hand and especially the inspiring paper \cite{SpWi19} while he was visiting HU Berlin and WIAS. HL is funded by the Deutsche Forschungsgemeinschaft (DFG, German Research Foundation) under Germany’s Excellence Strategy--EXC 2067/1-390729940, and acknowledges the support of DFG Collaborative Research Center 1456.
	
		\appendix
	
	\section{Adaptation rates}\label{s:lbar}

	We show a simple and interesting fact that the ordered filters with the posteriori parameter choice $\alpha \asymp \sigma$ attain the sharp order of optimal adaptation rates for logarithmic smoothness classes. {\rev This is because the upper bound on MSE in \cref{p:nlr} below coincides (up to multiplying constants) with the well-known lower bound in this smoothness class (see e.g.~\cite{Pin80})}. 
	
	\begin{prop}[Noise-level-rule]\label{p:nlr}
		Assume the model \eref{e:model} and that the eigenvalues of $\fop^*\fop$ decays as in \eref{e:eigopb} with either $a >1$ and $b = 0$, or $a \in \R$ and $b, \vartheta > 0$. Let Assumptions~\ref{a:scc}, \ref{a:qlf} and \ref{a:ss} hold, where the index function $\varphi = \varphi_{\nu,\tau}$ is given in \eref{e:indfun} with the parameters $\nu = 0$ and $0 < \tau < \tau_0\le \infty$. Let also $\hat\sig_\alpha = \flt_\alpha(\fop^*\fop) \fop^* \data
		$ with $\flt_\alpha(\cdot)$ be an ordered filter. Then, for the noise-level-rule $\alpha_* \asymp \sigma$, it holds that
		$$
		\limsup_{\sigma^2 \to 0} \sup_{0 < \tau < \tau_0 }\; \frac{1}{\psi_{0,\tau}(\sigma^2)} \sup_{\sigdag \in \mathcal W_{\varphi_{0,\tau}}\left(\rho\right)} \mathbb E \left[\bigl\Vert \hat \sig_{\alpha_*} - \sigdag\bigr\Vert_{\X}^2\right] < \infty,
		$$
		where $\psi_{0,\tau}(\sigma^2) := \bigl(-\log (\sigma^2)\bigr)^{-2\tau}$.
	\end{prop}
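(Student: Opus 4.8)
The plan is to read the bound off the bias--variance decomposition in \cref{l:bvd}, applied with the logarithmic index function $\varphi_{0,\tau}$ and with the smallest available smooth surrogate. Under the eigenvalue decay assumed here one may take $S(x)\asymp x^{-1/a}$ when $b=0$ and $S(x)\asymp(-\log x)^{1/\vartheta}$ when $b,\vartheta>0$; both satisfy \cref{a:ss}. Since \cref{a:scc}, \cref{a:qlf} and \cref{a:ss} hold and $\varphi_{0,\tau}$ is a genuine index function obeying \cref{a:scc}(ii), \cref{l:bvd} yields, for every $\tau\in(0,\tau_0)$,
\[
\sup_{\sigdag\in\mathcal W_{\varphi_{0,\tau}}(\rho)}\E{\norm{\hat\sig_{\alpha_*}-\sigdag}^2}\;\le\;C_{\varphi_{0,\tau}}^{2}\rho^{2}\,\varphi_{0,\tau}(\alpha_*)^{2}\;+\;C_{\rm v}\,\sigma^{2}\frac{S(\alpha_*)}{\alpha_*},
\]
with $C_{\rm v}$ not depending on $\tau$. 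It then remains to divide by $\psi_{0,\tau}(\sigma^2)=(-\log\sigma^2)^{-2\tau}$, take the supremum over $0<\tau<\tau_0$, and let $\sigma\to0$.

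I would estimate the two terms separately. With $\alpha_*\asymp\sigma$, the variance term equals $\sigma^2 S(\alpha_*)/\alpha_*\asymp\sigma^{(a-1)/a}$ in the mildly ill-posed case (a strictly positive power of $\sigma$, since $a>1$) and $\asymp\sigma(-\log\sigma)^{1/\vartheta}$ in the severely ill-posed case; in either case it is bounded by $\sigma^{\kappa}$ for some $\kappa>0$ and all small $\sigma$, so after dividing by $\psi_{0,\tau}$ and taking $\sup_{\tau<\tau_0}$ it is at most $\sigma^{\kappa}(-\log\sigma^2)^{2\tau_0}\to0$ --- here the finiteness of $\tau_0$ enters. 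For the bias term, $\varphi_{0,\tau}(\alpha_*)^2=(-\log\alpha_*)^{-2\tau}$, and because $\alpha_*\asymp\sigma$ gives $-\log\alpha_*=-\log\sigma+O(1)\ge\tfrac{1}{3}(-\log\sigma^2)$ for $\sigma$ small, we get $\varphi_{0,\tau}(\alpha_*)^2\le 3^{2\tau}\psi_{0,\tau}(\sigma^2)\le 3^{2\tau_0}\psi_{0,\tau}(\sigma^2)$. Combining, then taking $\sup_\tau$ and $\limsup_\sigma$, one is left with the bound $3^{2\tau_0}\rho^{2}\sup_{0<\tau<\tau_0}C_{\varphi_{0,\tau}}^{2}$, which is finite as soon as the qualification constants $C_{\varphi_{0,\tau}}$ stay bounded as $\tau\uparrow\tau_0$.

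Securing that last uniformity is the step I expect to be the main obstacle. The underlying reason it holds is that logarithmic source conditions are the weakest among those in \eref{e:indfun}: for $\tau<\tau'$ the ratio $\varphi_{0,\tau}/\varphi_{0,\tau'}=(-\log x)^{\tau'-\tau}$ is nonincreasing on $(0,e^{-1}]$ (recall $\norm{\fop^*\fop}\le e^{-1}$), so splitting $\sup_x\varphi_{0,\tau}(x)\bigl(1-x\flt_\alpha(x)\bigr)$ at $x=\alpha$, bounding $1-x\flt_\alpha(x)\le1$ with $\varphi_{0,\tau}$ monotone on $[0,\alpha]$ and invoking the level-$\tau'$ qualification on $[\alpha,\norm{\fop^*\fop}]$, gives $C_{\varphi_{0,\tau}}\le\max\{1,C_{\varphi_{0,\tau'}}\}$; for all ordered filters used in practice (spectral cut-off, Showalter, (iterated) Tikhonov, Landweber) this bound is finite for every $\tau_0<\infty$, which is the regime relevant for the order statements of the paper, but making it precise for arbitrary ordered filters satisfying \cref{a:qlf} requires \cref{a:qlf} to hold with a $\tau$-uniform constant over $\tau<\tau_0$. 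The remaining items --- that $\varphi_{0,\tau}$ meets \cref{a:scc}, that the surrogates above satisfy \cref{a:ss}, and the elementary $O(1)$ estimate for $-\log\alpha_*$ --- are routine.
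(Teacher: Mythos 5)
Your proposal takes the same route as the paper's own proof: apply Lemma~\ref{l:bvd} at $\alpha_*\asymp\sigma$ with the surrogate $S(x)\asymp x^{-1/a}$ in the mild case or $S(x)\asymp(-\log x)^{1/\vartheta}$ in the severe case, and observe that the variance contribution is eventually negligible relative to the logarithmic bias rate. The difference is that you keep explicit track of how every constant depends on $\tau$, whereas the paper writes $\lesssim$ for a fixed $\tau$ and then asserts the conclusion. This extra care is not pedantry: converting $\varphi_{0,\tau}(\alpha_*)^2\lesssim\psi_{0,\tau}(\sigma^2)$ costs an exponential-in-$\tau$ factor (your $3^{2\tau}$; the paper's hidden $2^{2\tau}$ coming from $-\log\sigma^2=2(-\log\sigma)$), and absorbing the $\tau$-free variance term into $\psi_{0,\tau}$ costs another $\tau$-dependent constant, both of which are uniformly bounded over $\tau\in(0,\tau_0)$ only if $\tau_0<\infty$. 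You are also right that the qualification constant $C_{\varphi_{0,\tau}}$ in Assumption~\ref{a:qlf} must be bounded uniformly over $\tau<\tau_0$ for the final bound to be finite; the monotone-ratio argument you sketch, yielding $C_{\varphi_{0,\tau}}\le\max\{1,C_{\varphi_{0,\tau_0}}\}$, is the right way to secure this, and it should properly be added to the hypotheses.

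The one substantive discrepancy is that the proposition allows $\tau_0\le\infty$, whereas your proof (like the paper's own) covers only $\tau_0<\infty$. In fact the statement cannot hold verbatim for $\tau_0=\infty$: since $0\in\mathcal W_{\varphi_{0,\tau}}(\rho)$ for every $\tau$, the inner supremum is bounded below by the $\tau$-free variance $\sigma^2 V_{T,\flt}(\alpha_*)>0$, and $\sup_{\tau>0}(-\log\sigma^2)^{2\tau}=\infty$ as soon as $\sigma^2<e^{-1}$, so the $\limsup$ in the proposition is already $+\infty$ when $\tau_0=\infty$. This is an oversight in the statement rather than a defect of your argument; under the reading $\tau_0<\infty$ together with a $\tau$-uniform version of Assumption~\ref{a:qlf}, your proof is correct and in fact more complete than the one given in the paper.
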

	
	\begin{proof}
		Consider an arbitrary $\tau \in (0, \tau_0)$. By \cref{l:bvd} we obtain
		\begin{equation}\label{e:bvnlr}
			\sup_{\sigdag \in  \mathcal{W}_{\varphi_{0,\tau}} (\rho)} \E{\norm{\hat \sig_{\alpha_*} - \sigdag}^2} \lesssim  \varphi_{0,\tau}(\sigma)^2 + \sigma^2 \frac{S(\sigma)}{\sigma}.     
		\end{equation}
		In case of mild ill-posedness, namely, $a > 1$ and $b = 0$, we have the surrogate function $S(x) \asymp x^{-1/a}$, which together with \eref{e:bvnlr} implies 
		$$
		\sup_{\sigdag \in  \mathcal{W}_{\varphi_{0,\tau}} (\rho)} \E{\norm{\hat \sig_{\alpha_*} - \sigdag}^2} \lesssim \bigl( -\log(\sigma^2)\bigr)^{-2\tau} + \sigma^{1- 1/a} \lesssim \bigl( -\log(\sigma^2)\bigr)^{-2\tau}.
		$$
		In case of severe ill-posedness, namely, $a \in \R$ and $b, \vartheta > 0$, we have the surrogate function $S(x) \asymp (-\log x)^{1/\vartheta}$, which together with \eref{e:bvnlr} implies 
		$$
		\sup_{\sigdag \in  \mathcal{W}_{\varphi_{0,\tau}} (\rho)} \E{\norm{\hat \sig_{\alpha_*} - \sigdag}^2} \lesssim \bigl( -\log(\sigma^2)\bigr)^{-2\tau} + \sigma (-\log \sigma)^{1/\vartheta} \lesssim \bigl( -\log(\sigma^2)\bigr)^{-2\tau}.
		$$
		Combining two cases concludes the proof. 
	\end{proof}
	

\end{document}